\errorcontextlines10
\documentclass[reqno,11pt]{amsart}

\usepackage{graphicx}
\usepackage{latexsym}
\usepackage{amstext}
\usepackage {amsmath}
\usepackage {amsfonts}
\usepackage {amssymb}
\usepackage {amsthm}
\usepackage {bbm}
\usepackage {mhequ}
\usepackage {times}

\usepackage {microtype}
\usepackage{enumerate}
\usepackage{array}
\usepackage{comment}
\usepackage{xspace}
\usepackage[bookmarks=true]{hyperref}
\usepackage{enumerate}
\oddsidemargin 6pt\evensidemargin 6pt\marginparwidth 48pt\marginparsep 10pt
\topmargin -18pt\headheight 12pt\headsep 25pt
\ifx\cs\documentclass \footheight 12pt \fi \footskip 30pt
\textheight 625pt\textwidth 431pt\columnsep 10pt\columnseprule 0pt


\DeclareMathOperator{\Sym}{Sym}

\DeclareMathOperator{\tr}{Tr}

\newcommand{\eps}{\varepsilon}
\newcommand{\R}{\ensuremath{\mathbb{R}}}
\newcommand{\Rn}{\ensuremath{{\mathbb{R}^n}}}
\newcommand{\N}{\ensuremath{\mathbb{N}}}
\newcommand{\Z}{\ensuremath{\mathbb{Z}}}
\newcommand{\Ex}{\ensuremath{\mathbb{E}}} 

\def\Prob{\ensuremath{{\mathbb{P}}}}
\def\BV{\mathrm{BV}}
\def\${|\!|\!|} 
\newcommand{\F}{\mathcal{F}} 

\newcommand{\CaX}{\mathcal{C}^\alpha_X}
\newcommand{\CaT}{\mathcal{C}^\alpha_{X,T}}
\newcommand{\XX}{\mathbf{X}}
\newcommand{\Da}{\mathcal{D}^\alpha}
\newcommand{\DaT}{\mathcal{D}^\alpha_T}
\newcommand{\Ap}{\mathcal{A}^{\alpha,p}}
\def\d{\mathrm{d} }
\newcommand{\stf}{\sigma^3_{K_1, K_2 , K_3} }
\newcommand{\stt}{\sigma^2_{K_1, K_2 } }
\newcommand{\CT}{C^\alpha_T }
\renewcommand{\vartheta}{\kappa}

\def \BDG{Burkholder-Davies-Gundy~}

%
%
\theoremstyle{plain}
\numberwithin{equation}{section}
\newtheorem{lemma}{Lemma}[section]
\newtheorem{theorem}[lemma]{Theorem}
\newtheorem{proposition}[lemma]{Proposition}
\newtheorem{definition}[lemma]{Definition}

\theoremstyle{definition}
\newtheorem{remark}[lemma]{Remark}

%
%
\begin{document}

\title[Rough Burgers]{Rough Burgers-like equations with multiplicative noise}

\author{Martin Hairer}
\address{Martin Hairer, Mathematics Department, University of Warwick }

\author{Hendrik Weber}
\address{Hendrik Weber, Mathematics Department, University of Warwick }

\email{M.Hairer@Warwick.ac.uk, Hendrik.Weber@Warwick.ac.uk}



\date{\today}



%
%

\begin{abstract}
We construct solutions to vector valued Burgers type equations perturbed by a multiplicative space-time white noise in one space dimension. 
Due to the roughness of the driving noise, solutions are not regular enough to be amenable to classical methods. 
We use the theory of controlled rough paths to give a meaning to the spatial integrals involved in the definition of a weak solution. 
Subject to the choice of the correct reference rough path, we prove unique solvability for the equation and we show that 
our solutions are stable under smooth approximations of the driving noise. 
\end{abstract}

\maketitle

\section{Introduction}
\label{sec:intro}

The main goal of this article is to first provide a good notion of a solution and then to establish their existence and uniqueness
for systems of stochastic partial differential equations
of the form
\begin{gather}
\mathrm{d}u\, =  \, \big[ \Delta u + g(u) \partial_x u \big] \d t + \theta(u)\, \mathrm{d}W(t) \label{eq:Stoch-Burg},
\end{gather}
where $u\colon [0,T] \times [0, 1]  \times \Omega \to \Rn$. Here, the functions $g \colon \Rn \to \R^{n \times n}$ as well as 
$\theta \colon\Rn \to \R^{n \times n}$ are assumed to be sufficiently smooth and the operator $\Delta = \partial_x^2$ denotes the Laplacian with periodic boundary conditions on $[0,1]$ acting on each coordinate of $u$. Finally, $W( t)$ is a cylindrical Wiener process on $L^2([0,1],\Rn)$, 
i.e. we deal with space-time white noise.

Our motivation for studying \eqref{eq:Stoch-Burg} is twofold. On the one hand, similar equations, but with additive noise, arise in the context of path sampling,
when the underlying diffusion has additive noise \cite{HairerStuartVoss07,Ha10}. While we expect the case of underlying diffusions with multiplicative noise
to be more complex than the equations considered in this article, we believe that it already provides a good stepping stone for the understanding of 
SPDE with rough solutions and state-dependent noise. Another motivation is the understanding of the solutions to the one-dimensional
KPZ equation \cite{KPZ,BCJ94} and their construction without relying on the Cole-Hopf transform \cite{KPZPreprint}. 
While that equation might at first sight be quite remote from the problem at hand
(it has additive noise and the nonlinearity has a different structure), it is possible to perform formal manipulations of its solutions that yield
equations with features that are very close to those of the equations considered in the present work.

The main obstacle we have to overcome lies in the spatial regularity of the solutions. Actually, solutions to the linear stochastic heat equation 
\begin{gather}
\mathrm{d} X \, =  \,  \Delta X \, \mathrm{d}t + \mathrm{d}W(t) \label{eq:SHE},
\end{gather}
are not differentiable as a function of $(t,x)$. The function $X$ is almost surely $\alpha$-H\"older continuous for every $\alpha <\frac{1}{2}$ as a function of the space variable $x$ and $\frac{\alpha}{2}$-H\"older continuous as a function of the time variable $t$. However, it is almost surely \emph{not} $\frac{1}{2}$-H\"older continuous as a function of $x$.  Since we expect $u$ to have similar regularity properties, 
it is a priori not clear how to interpret the spatial derivative in the nonlinear term $g(u)\partial_x u$. 

So far, equation \eqref{eq:Stoch-Burg} has mostly been studied under the assumption that there exists a function $G$ such that $g=DG$  (see e.g. \cite{Gy98,BCJ94,dPDT94} ). Clearly in the one-dimensional case $n=1$ such a function always exists whereas in the higher-dimensional case this is not true. Then weak solutions can be defined as processes satisfying 
\begin{gather}
 \mathrm{d} \langle \varphi,u \rangle \, =  \, \big[  \langle \Delta \varphi, u \rangle  - \langle\partial_x \varphi,  G(u) \rangle \big]  \d t +  \langle \varphi \, \theta(u) , \d W(t) \rangle  \label{eq:Stoch-Burg-weak},
\end{gather} 
for every smooth periodic test function $\varphi$. 
It is known \cite{Gy98} that under suitable regularity and growth assumptions on $G$ and $\theta$, such solutions can then indeed be constructed
and are unique.
When such a primitive $G$ does not exist, this method cannot be applied and a concept of solutions has not yet been provided.

Recently, in \cite{Ha10} a new approach was proposed to deal with this problem in the additive noise case $\theta =1$. There the non-linearity in \eqref{eq:Stoch-Burg-weak}  is rewritten as
\begin{equation}
\langle \varphi,  g(u) \partial_x u \rangle \,=\,  \int_0^1 \! \varphi(x) \,  g\big(u(x) \big) \,\mathrm{d}_x u(x). \label{eq:stoch-int}
\end{equation}
The (spatial) regularity of $u$ is not sufficient to make sense of this integral in a  pathwise sense using Young's integration theory. This means in particular that extra stochastic cancellation effects are necessary  to give a meaning to  \eqref{eq:stoch-int}. In \cite{Ha10} these problems are treated using Lyons' rough path theory (see  \cite{Ly98, Gu04, FV10}).  

In this approach the definition of integrals like \eqref{eq:stoch-int} is separated into two steps: First a \emph{reference} rough path has to be constructed i.e. a pair of stochastic process $(X,\XX)$ satisfying a certain algebraic condition. The process $\XX$ should be thought of as the \emph{iterated integral}
\begin{equation}\label{eq:itint}
\XX(x,y) \, = \, \int_x^y \big( X(z)-X(x)\big) \otimes \d X(z). 
\end{equation} 
It is usually constructed using a stochastic integral like the  It\^o or Stratonovich integral. 
If on small scales $u$ behaves like $X$ (see Section \ref{sec:RP} for a precise definition of a \emph{controlled rough path}) integrals like \ref{eq:stoch-int} can be defined as continuous functions of their data.

 The advantage of this approach is that the stochastic cancellation effects are captured in the reference rough path and can be dealt with independently of the rest of the construction.  As a reference rough path the solution $X$ of the linear heat equation \eqref{eq:SHE} is chosen. As this process is Gaussian known existence and continuity results \cite{FV07, FV10} rough paths can be applied.

This article provides an extension of \cite{Ha10} to the multiplicative noise case $\theta \neq 1$. The multiplicative noise is included by a second  fixed point argument. One tricky part is that, even if  $\theta$ is a fixed adapted stochastic process, it is no longer clear how to interpret solution to the 
``linear'' heat equation
\begin{equation}
\d \Psi^\theta\, = \, \Delta u +\theta \, \d W 
\end{equation}      
as rough path valued processes. The $\Psi^\theta$ are not Gaussian in general and so the results
of \cite{FV07,FV10} do not apply directly. We resolve this issue by showing that as soon as $\theta$ has sufficient space-time regularity, 
the process $\Psi^\theta$ can be interpreted for every fixed $t>0$ as a rough path (in space) controlled by the rough path $X$
constructed from solutions to \eqref{eq:SHE}. We are able to use this knowledge in Definition~\ref{def:weak-sol} 
below to formulate what we actually mean by a solution to \eqref{eq:Stoch-Burg}.
Such solutions are then  obtained by combining a fixed point argument in a space of deterministic functions to deal with the non-linearity and another  fixed point argument in a space of stochastic processes to deal with the multiplicative noise.

The solutions we construct in this way depend on the choice of the reference rough path $(X,\XX)$. Since there is a priori some freedom in the definition of the iterated integral \eqref{eq:itint}, similar to the choice between It\^o or Stratonovich integral, we can get several possible solutions. 
This will be discussed at the end of Section~\ref{sec:MR}.
 Actually,  these different solutions are not only an artefact of the classical ill-posedness of our equation, since
 they also appear in the gradient case $g=DG$. This may sound surprising, but in a series of recent works \cite{ Ha10-2, HV10,HM10}, approximations to stochastic equations with similar regularity properties as \eqref{eq:Stoch-Burg} were studied. It was shown there that several seemingly natural approximation schemes involving different approximations of the non-linear term may produce non-trivial correction terms in the limit, that correspond exactly to this kind of It\^o-Stratonovich correction.  In the gradient case, this ambiguity can be removed by imposing the additional assumption that the reference rough path is a geometric rough path
(see Section \ref{sec:MR} for a discussion). 
 
 In the non-gradient case however, this is not sufficient to characterise the solution uniquely. Even among the geometric reference rough paths, different \emph{pure area type} terms may appear. We argue that the ``canonical'' construction of $\XX$ given in \cite{FV07,FV10} 
 is still natural for the problem at hand since, if we replace the noise by a mollified version and then send the scale $\eps$ of the mollifier to $0$,
 we show that the corresponding sequence of solutions converges to the solution constructed in this article. Note that the fact that this 
 sequence of solutions even remains bounded as $\eps \to 0$ does not follow from standard bounds. In particular, we would not know a priori
 how to use these approximations to construct solutions to \eqref{eq:Stoch-Burg}. 
These convergence results do not contradict the appearance of the additional correction terms discussed in \cite{Ha10-2, HV10,HM10},
since the approximations considered there also involve an approximation of the nonlinear term $g(u) \d_x u$.

Of course our work is by no means the only work that establishes an application of rough path methods to the theory of stochastic PDE. Recently Gubinelli and Tindel \cite{GT10} used a rough path approach to construct mild solutions to equations of the type
\begin{equation}
\mathrm{d}u \,=\, A u\, dt + F(u)\, \mathrm{d}W 
\end{equation} 
for a rough path $W$ taking values in some space of possibly quite irregular functions. However, in the case of $A$ being the Laplacian on $[0,1]$
and $F$ being a composition operator, they required the covariance operator of the noise to decay at least like $ ( - \partial_{xx}) ^{-1/6}$, thus ruling out
space-time white noise.
Teichmann \cite{Te09} suggests another approach using the method of the moving frame, but these results deal with a different class of equations and are designed to deal with noise terms that are rough in time, but rather smooth in space. Yet another approach to treat equations of the type
\begin{equation}
\mathrm{d}u(t,x) \,=\,F(t,x,Du,D^2 u) \, \mathrm{d}t - Du(t,x) V(x)  \, \mathrm{d} W(t)
 \end{equation}
 using the stochastic method of characteristics is presented by Caruana, Friz, and Oberhauser in \cite{CFO09}, extending ideas from \cite{LS98}. 
  In all of the above works, rough path theory is used to define the temporal integrals. To our knowledge \cite{Ha10} was the first article to make use of rough integrals to deal with spatial regularity issues. 
  
There is also a wealth of literature on the problem of how to treat stochastic PDEs with solutions that are very rough in space. A large part of it
is inspired by the ideas of renormalisation theory coming from quantum field theory. For example, it was possible in 
\cite{MR815192,AlbRock,DPD03,MR1941997} to rigorously construct solutions to the stochastic Allen-Cahn and Navier-Stokes equations in two spatial dimensions, driven by space-time white noise. It is not clear whether these techniques apply to our problem. Furthermore, our results
allow us to obtain very fine control on the solutions and on the convergence of approximations, on the contrary of the 
above mentioned works,  where solutions are only constructed for a set of initial conditions that is of full measure with respect to the invariant measure.
A related but somewhat different approach is to use Wick calculus in Wiener space as in \cite{BDP,NuRo,MR1743612}, but this leads to different equations,
the interpretation of which is not clear.
  
The remainder of the article is structured as follows:  In Section \ref{sec:RP} we give a brief account of those notions of  rough path theory that we will need. We follow Gubinelli's approach \cite{Gu04} to define rough integrals and recall the existence and continuity results for Gaussian rough path from \cite{FV07}. In Section \ref{sec:MR} we give a rigorous definition of our notion of a solution to \eqref{eq:Stoch-Burg} and we state the main results of this work. 
This section also contains a discussion of the dependence of solutions on the choice of reference rough path.
In Section \ref{sec:Prel-Calc},  we then discuss the stochastic convolution  $\Psi^\theta$. In particular, we show that as soon as $\theta$ possesses the right space-time regularity it is controlled by the linear Gaussian stochastic convolution. 
The proof of the main results is then given in  Section \ref{sec:Solutions}. The beginning of this section also contains a sketch of the
main ``two-level'' fixed point argument upon which our proofs rely.

\subsection{Notation}
We will deal with functions $u\,= \, u(t,x; \omega)$ depending on a time and a space variable as well as on randomness. Norms that only depend on the behaviour of $u$ as a function of space for fixed $t$ and $\omega$ will be denoted with $| \cdot |$, norms that depend on the behaviour as a function of $(t,x)$ for fixed $\omega$ with $\| \cdot \|$ and norms that depend on all parameters with $ \$ \cdot \$ $. 

We will denote by $C$ a generic constant that may change its value at every occurrance.

\subsection*{Acknowledgements}

{\small
We are grateful to Peter Friz, Massimilliano Gubinelli, Terry Lyons, Jan Maas, Andrew Stuart, and Jochen Vo\ss\ for several fruitful discussions
 about this work.
Financial support was kindly provided by the EPSRC through grant EP/D071593/1, as
well as by the Royal Society through a Wolfson Research Merit Award and by the Leverhulme Trust through a Philip Leverhulme Prize.
}

\section{Rough Paths}
\label{sec:RP}

In this section we recall the elements of rough path theory which we will need in the sequel and refer the reader to e.g. \cite{ FV10,Gu04,LQ02, LCL07} for a more complete account. We introduce Gubinelli's notion of a controlled rough path \cite{Gu04} and give the main existence and continuity statements for Gaussian rough path from \cite{FV07}. 


For a normed vector space $V$ we denote by $C(V)$ the space of continuous functions from $[0,1]$ to $V$ and by $\Omega C(V)$ the space of continuous functions from $[0,1]^2$ to $V$ which vanish on the diagonal (i.e. for  $R \in \Omega C(V)$ we have $R(x,x)=0$ for all $x \in [0,1]$). We will often omit the reference to the space $V$ and simply write $C$ and $\Omega C$.

It will be useful to introduce for $X \in C$ and $R \in \Omega C$ the operators
\begin{equation}\label{eq:def-delta}
 \delta X (x,y)\,=\, X(y)-X(x)
 \end{equation}
 and
 \begin{equation}\label{eq:def-N}
 N R (x,y,z)\,=\,R(x,z)-R(x,y)-R(y,z). 
\end{equation}
The operator $\delta$ maps $C$ into $\Omega C$ and $N \circ \delta =0$. The quantity $N R$ can be interpreted as an indicator to how far $R$ is from the image of $\delta$. 

In the sequel $\alpha$ will always be a parameter in $\big( \frac{1}{3},\frac{1}{2} \big)$.  For $X \in C$ and $R \in \Omega C$ we define H\"older type semi-norms:
\begin{gather}
| X |_{\alpha} = \sup_{x \neq y } \frac{|\delta X(x,y)|}{|x-y|^\alpha} \quad \text{and} \quad | R |_{\alpha} = \sup_{x \neq y } \frac{|R(x,y)|}{|x-y|^{\alpha}}.\label{eq:norms-rp} 
\end{gather}
We denote by $C^\alpha$ resp. $\Omega C^\alpha$ the set of functions for which these semi-norms are finite. The space $C^\alpha$ endowed with $| \cdot  |_{C^\alpha} = |\cdot |_0 + | \cdot |_\alpha $ is a Banach space. Here $|\cdot|_0$ denotes the supremum norm.  The space $\Omega C^\alpha(V)$ is a Banach space endowed with $| \cdot |_\alpha$ alone. 
\begin{remark}
One might feel slightly uneasy working in these H\"older spaces as they are not separable. We will neglect this issue noting that all the processes we will consider will actually take values in the slightly smaller space of $C^\alpha$-functions that can be approximated by smooth functions which is  separable.
\end{remark}


\begin{definition}\label{def:RP}
An $\alpha$-rough path on $[0, 1]$ consists of a pair $X \in C^\alpha(\Rn)$ and $\mathbf{X}\in \Omega C^{2\alpha}\big(\Rn \otimes  \Rn \big)$ satisfying the relation
\begin{gather}
N \mathbf{X}^{ij}(x,y,z) \,=\,  \delta X^i(x,y)\delta X^j(y,z) \label{eq:cond-it-int}
\end{gather}
for all $0\leq x \leq y\leq z \leq 1$ and all indices $i,j \in \{1, \ldots, n\}$. We will denote the set of $\alpha$-rough paths by $\mathcal{D}^\alpha(\Rn)$ or simply by $\Da$.
\end{definition}

As explained above $\mathbf{X}(x,y)^{ij}$ - called the \emph{iterated integral} should be interpreted as the value of 
\begin{gather}
\mathbf{X}(x,y)^{ij} \,=\, \int_x^y \big(X^i(z)-X^i(x) \big) \mathrm{d} X^j(z) \label{eq:it-int}.
\end{gather}
If $X$ is smooth enough for \eqref{eq:it-int} to make sense it is straightforward to check that $\mathbf{X}(x,y)^{ij}$ defined by \eqref{eq:it-int} satisfies \eqref{eq:cond-it-int}. Indeed, \eqref{eq:cond-it-int} only reflects the fact, that the integral over two disjoint intervals is given by the sum of the integrals over these intervals.  On the other hand we do not require consistency in the sense that $\mathbf{X}(x,y)^{ij}$ needs not be defined by \eqref{eq:it-int}, even if it would make sense. Due to the non-linear constraint \eqref{eq:cond-it-int} the set of $\alpha$-rough paths is not a vector space. But it is a subset of the Banach space $C^\alpha \oplus  \Omega C^\alpha$ and we will use the notation 
\begin{equation}
| (X,\mathbf{X}) |_{\mathcal{D}^\alpha} =  |X|_{C^\alpha} + |\mathbf{X}|_{2 \alpha}.
\end{equation} 
%


Let us recall the basic existence and continuity properties for Gaussian rough paths. Following \cite{FV07} we define for $\rho \geq1$ the 2-dimensional $\rho$-variation of a function $K \colon [0,1]^2 \to \R$ in the cube $[x_1,y_1]\times[x_2,y_2] \subseteq  [0,1]^2$ as
\begin{align}
|K|_{\rho-\text{var}}&\big([x_1,y_1]\times[x_2,y_2] \big) \notag\\
 =\, &  \Big( \sup \sum_{i,j} \big|K(z_1^{i+1}, z_2^{j+1})+K(z_1^{i}, z_2^{j}) - K(z_1^{i}, z_2^{j+1}) - K(z_1^{i+1}, z_2^{j}) \big|^{\rho} \Big)^{\frac{1}{\rho}}\label{eq:2-dvar1},
\end{align}
where the supremum is taken over all finite partitions $x_1 \leq z_1^1 \leq \ldots \leq  z_1^{m_1}=y_1$ and $x_2 \leq z_2^1 \leq \ldots \leq z_2^{m_2}=y_2$

This notion of finite two-dimensional 1-variation does not coincide with the classical concept of being of bounded variation. Actually a function is $\BV$ if its gradient is a vector valued measure, whereas $K$ is of finite 2-dimensional 1-variation if $\partial_x  \partial_y \, K$ is a measure.

The following lemma is a slightly modified version of the existence and continuity results from \cite{FV07}:


\def\FrizVictoir{\cite[Thm~35]{FV07}  and \cite[Cor.~15.31]{FV10}}
\begin{lemma}[\FrizVictoir]\label{lem:gau-rp}
Assume that $X=(X_1(x),\ldots,X_n(x))$, $x \in [0,1]$ is a centred Gaussian process. We assume that the components are independent of each other and denote by $K_i(x,y)$ the covariance function of the $i$-th component.

Assume that there the $K^X_i$ satisfy the following bound for every $0 \leq x \leq y \leq 1$
\begin{equation}\label{eq:2-dvar2}
|K^X_i|_{\rho-\text{var}}\big([x,y]^2  \big) \leq C |y-x|. 
\end{equation}
Then for every $\alpha < \frac{1}{2\rho }$ the process $X$ can canonically be lifted to an $\alpha$  rough path $(X,\mathbf{X})$ and for every $p \geq 1$
\begin{equation}\label{eq:rp-mo-bo}
\Ex\big[|X|_\alpha^p \big] \, < \infty \quad \text{ and } \quad \Ex\big[|\mathbf{X}|_{2\alpha}^p \big] \, < \infty.   
\end{equation}

Furthermore, let  $Y= (Y_1(x), \ldots , Y_n(x) )$ be another  process such that  $(X,Y)$ is jointly Gaussian and  $(X_i,Y_i)$ and $(X_j,Y_j)$ are mutually independent for $i \neq j$. Assume that the covariance functions $K^X_i$ and $K^Y_i$ satisfy \eqref{eq:2-dvar2} and that 
\begin{equation}\label{eq:2-dvar3}
|K^{X-Y}_i |_{\rho-\text{var}}\big([x,y]^2  \big) \leq C \eps^2 |y-x|^{\frac{1}{\rho}}, 
\end{equation}
then for every $p$
\begin{equation}\label{eq:rp-cont}
\Ex \big[ |\mathbf{X}-\mathbf{Y}|_{2\alpha}^p \big]^{1/p} \, \leq \, C \eps . 
\end{equation}
\end{lemma}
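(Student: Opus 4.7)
The plan is to follow the Friz-Victoir construction: build $\mathbf{X}$ as an $L^2$-limit of pathwise iterated integrals of smooth approximations of $X$, bound moments by hypercontractivity in a fixed Wiener chaos, and obtain H\"older regularity through Kolmogorov's continuity criterion. Continuity in $\eps$ is then extracted by the same scheme applied to the difference.

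First I would approximate $X$ by its piecewise linear interpolations $X^{(m)}$ on the dyadic grid of mesh $2^{-m}$ and set $\mathbf{X}^{(m)}(x,y)^{ij} = \int_x^y (X^{(m),i}(z)-X^{(m),i}(x))\, \d X^{(m),j}(z)$ pathwise. Because the components of $X$ are independent, for $i\ne j$ this random variable lies in the second (inhomogeneous) Wiener chaos generated by $X^i$ and $X^j$, and the Wiener isometry expresses its $L^2$-norm as a 2D Riemann--Stieltjes integral against $K^X_i\otimes K^X_j$. The analytic heart of \cite{FV07} is the 2D Young inequality
\begin{equation*}
\Ex\bigl[|\mathbf{X}^{(m)}(x,y)^{ij}|^2\bigr] \le C\,|K^X_i|_{\rho\text{-var}}([x,y]^2)^{1/\rho}\,|K^X_j|_{\rho\text{-var}}([x,y]^2)^{1/\rho},
\end{equation*}
which is finite exactly when $\rho<2$. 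Combined with \eqref{eq:2-dvar2} this yields $C|y-x|^{2/\rho}$. The diagonal case is handled pathwise by $\mathbf{X}^{(m)}(x,y)^{ii} = \tfrac12(\delta X^{(m),i}(x,y))^2$. Hypercontractivity on the second chaos then upgrades the $L^2$-bound to $\Ex[|\mathbf{X}^{(m)}(x,y)|^p]\le C_p|y-x|^{p/\rho}$ for every $p\ge 1$, and a direct Gaussian estimate gives $\Ex[|\delta X(x,y)|^p]\le C_p|y-x|^{p/(2\rho)}$.

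Next, Kolmogorov's continuity criterion applied with $p$ sufficiently large converts these pointwise moment bounds into uniform-in-$m$ bounds on $\Ex[|X^{(m)}|_{C^\alpha}^p]$ and $\Ex[|\mathbf{X}^{(m)}|_{2\alpha}^p]$ for any $\alpha<1/(2\rho)$. The same 2D variation estimate applied to the difference $X^{(m)}-X^{(m')}$ shows that $\{(X^{(m)},\mathbf{X}^{(m)})\}$ is Cauchy in $L^p(C^\alpha\oplus\Omega C^{2\alpha})$, producing a limit $(X,\mathbf{X})$ satisfying \eqref{eq:rp-mo-bo}. Relation \eqref{eq:cond-it-int} passes to the limit because it holds identically for each smooth approximation.

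For the continuity statement I would decompose
\begin{equation*}
\mathbf{X}^{ij}-\mathbf{Y}^{ij} = \int \delta(X^i-Y^i)\,\d X^j + \int \delta Y^i\,\d(X^j-Y^j),
\end{equation*}
and apply the 2D Young machinery to each piece. Every resulting term contains exactly one factor involving $K^{X-Y}_i$ or $K^{X-Y}_j$, which by \eqref{eq:2-dvar3} contributes $\eps^2|y-x|^{1/\rho}$, while the companion factor is uniformly controlled by \eqref{eq:2-dvar2}. Hypercontractivity on the second chaos and a final application of Kolmogorov then yield \eqref{eq:rp-cont}. The main obstacle throughout is the 2D Young inequality used in the first step: hypercontractivity, Kolmogorov, and the limiting procedure are essentially standard, but controlling the double Riemann sum purely in terms of the $\rho$-variation of the covariance requires the delicate combinatorial partition analysis of \cite{FV07}, which is what forces the threshold $\alpha<1/(2\rho)$.
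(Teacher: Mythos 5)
The paper does not actually prove this statement; it quotes it from Friz and Victoir (\cite[Thm~35]{FV07} and \cite[Cor.~15.31]{FV10}), so there is no in-paper argument to compare your sketch against. That said, your account reproduces the structure of the Friz--Victoir proof faithfully: piecewise linear approximation, Wiener isometry reducing second moments of $\mathbf{X}^{(m)}$ to 2D Riemann--Stieltjes integrals of the covariances, the 2D Young inequality (which is where $\rho<2$ enters), hypercontractivity on the second Wiener chaos, Kolmogorov/Garsia--Rodemich--Rumsey to convert pointwise moment bounds into H\"older-norm bounds, a Cauchy argument in $m$ to identify the limit, and, for \eqref{eq:rp-cont}, the decomposition of $\mathbf{X}-\mathbf{Y}$ into two iterated integrals each carrying exactly one factor of $X-Y$.

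One piece of bookkeeping is off. With the normalization \eqref{eq:2-dvar1}, in which the $1/\rho$-th root is already taken, the Wiener-isometry-plus-2D-Young bound reads
\begin{equation*}
\Ex\big[|\mathbf{X}^{(m)}(x,y)^{ij}|^2\big] \,\le\, C\,|K^X_i|_{\rho\text{-var}}\big([x,y]^2\big)\,|K^X_j|_{\rho\text{-var}}\big([x,y]^2\big),
\end{equation*}
with no extra $1/\rho$ exponents on the variation norms. In the standard Friz--Victoir hypothesis $|K|_{\rho\text{-var}}([x,y]^2)\le C|y-x|^{1/\rho}$ (the form that matches \eqref{eq:2-dvar3} and the remark just after the lemma) this yields the required $|y-x|^{2/\rho}$, hence $2\alpha<1/\rho$. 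Your displayed inequality with the spurious $1/\rho$ powers would instead give the too-weak rate $|y-x|^{2/\rho^2}$; you recover $|y-x|^{2/\rho}$ only because you apply it to the linear right-hand side of \eqref{eq:2-dvar2} as printed, and the two discrepancies cancel. This does not affect the conclusion in the present setting, but the 2D Young estimate should be quoted without the extra powers, otherwise it propagates incorrect exponents elsewhere.
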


In this context canonically lifted means that $\mathbf{X}$ is constructed by considering approximations to $X$, defining approximate iterated integrals using \eqref{eq:it-int} and then passing to the limit. Several approximations including piecewise linear, mollification and a spectral decomposition yield the same result. In the case where $X$ is an $n$-dimensional Brownian motion, $\XX$ is given by \eqref{eq:it-int} where the integral is interpreted as a Stratonovich integral. 

Note that the condition \eqref{eq:2-dvar2} implies that the classical Kolmogorov criterion 
\begin{equation}
\Ex \big[ | X(x) -X(y)|^2   \big]  \, \leq C |x-y|^{\frac{1}{\rho}},
\end{equation}
for $\alpha$-H\"older continuous sample paths is satisfied. 

\begin{remark}\label{rem:gRP}
The results in \cite{FV07, FV10} imply more than we state in Lemma \ref{lem:gau-rp}. In particular, there it is proven that $(X,\XX)$ is a \emph{geometric} rough path. We will not discuss the concept of geometric rough path in detail (see e.g. \cite{FV10} for the geometric  aspects of rough path theory), but remark that it implies that for every $x,y$ the symmetric part of $\XX(x,y)$ is given as
\begin{equation}\label{eq:geom-rp}
\Sym \big( \XX(x,y) \big) = \frac{1}{2} \Big(  \XX(x,y) + \XX(x,y)^T \Big) = \frac{1}{2} \delta X(x,y) \otimes \delta X(x,y).  
\end{equation}
\end{remark}


Our goal is to define integrals against rough paths. Here Gubinelli's approach seems to be best suited to our needs. Thus following  \cite{Gu04} we define:

\begin{definition}
Let $(X,\mathbf{X})$ be in $\mathcal{D}^\alpha(\R^n)$. A pair $(Y,Y')$ with  $Y \in C^\alpha(\R^m) $  and $Y'\in C^{\alpha}(\R^m \otimes \Rn)$ is said to be \emph{controlled} by $(X,\mathbf{X})$ if for all $0 \leq x  \leq y \leq 1$
\begin{gather}
\delta Y(x,y)= Y'(x) \delta X(x,y)  + R_Y(x,y),\label{eq:contr-rp}
\end{gather}
with a remainder $R_Y \in \Omega C^{2\alpha}(\R^m)$. Denote the space of $\R^m$-valued controlled  rough paths $(X, \mathbb{X})$ by $\mathcal{C}_X^{\alpha}(\Rn)$ (or simply $\mathcal{C}_X^{\alpha}$) .  
\end{definition}

Note that the constraint \eqref{eq:contr-rp} is linear and in particular  $\mathcal{C}_X^{\alpha}$ is a vector space although $\mathcal{D}^\alpha $ is not. We will use the notation 
\begin{gather}
| (Y,Y')|_{\mathcal{C}_X^{\alpha}} \, = \, | Y |_{C^\alpha} + | Y' |_{C^\alpha} +  | R_Y |_{2 \alpha}  
\end{gather}
for $(Y,Y') \in \mathcal{C}_X^{\alpha}$.

In general the decomposition \eqref{eq:contr-rp} need not be unique but it is unique as soon as for every $x \in [0,1]$ there exists a sequence $x_n \to x$ such that 
\begin{equation}
\frac{|X(x)-X(x_n)|}{|x-x_n|^{2 \alpha}} \to \infty,
\end{equation}
i.e. if $X$ is rough enough. In most of the situations we will encounter, this will be the case almost surely and there is a natural choice of $Y'$. We will therefore often drop the reference to the derivative $Y'$ and simply refer to $Y$ as a controlled rough path.


Given two $\alpha$-H\"older  paths $Y, Z \in C^\alpha(\Rn)$ it is generally not possible to prove convergence of the Riemann sums
\begin{equation}
\sum_{i} Y(x_i)  \otimes \big(Z(x_{i+1})- Z(x_{i})  \big)
\end{equation}
if the partition $0 \leq x_1 \leq \ldots \leq x_m=1$ becomes finer. If we know in addition that $Y,Z$ are controlled by a rough path $(X,\mathbf{X})$ it is natural to construct the integral $\int Y \mathrm{d} Z$ by a second order approximation 
\begin{equation}\label{eq:Riem-sum2}
\sum_{i} Y(x_i) \otimes \big(Z(x_{i+1})- Z(x_{i})  \big) + Y'(x_i) \mathbf{X}(x_i,x_{i+1})Z'(x_i)^T.
\end{equation}
As we always assume $\alpha >\frac{1}{3}$, it turns out that the approximations \eqref{eq:Riem-sum2} converge:

\def\Gubinelli{\cite[Thm~1 and Cor.~2]{Gu04}}
\begin{lemma}[\Gubinelli]\label{lem:Gub-int}
Suppose $(X, \XX) \in \mathcal{D}^\alpha(\Rn)$ and $Y,Z \in \mathcal{C}_X^{\alpha}(\R^m)$ for some $\alpha  > \frac{1}{3}$. Then the Riemann sums defined in \eqref{eq:Riem-sum2} converge as the mesh of the partition goes to zero. We call the limit \emph{rough integral} and denote it by  $\int Y(x) \otimes \, \mathrm{d} Z(x)$. 

The mapping $(Y,Z) \mapsto \int Y \otimes  \mathrm{d} Z$ is bilinear and we have the following bound:
\begin{gather}\label{eq:IB1}
\int_x^y Y(z) \otimes  \mathrm{d} Z(z)  \, = \, Y(x) \otimes  \delta Z(x,y) + Y'(x )\XX (x,y)Z'(x)^T + Q(x,y), 
\end{gather}
where the remainder satisfies
\begin{align}
\big| Q \big|_{3 \alpha} \, \leq \,&  C \bigg[  |R_Y|_{2 \alpha} |Z|_\alpha + |Y'|_{0}  |X|_{\alpha}  |R_Z|_{2 \alpha}  \notag\\
& \qquad +  |\mathbf{X}|_{2 \alpha} \Big(  |Y'|_\alpha|Z'|_0 +  |Y'|_0|Z'|_\alpha  \Big)  + |X|_\alpha^2 |Y'|_{0} |Z|_\alpha  \bigg]. \label{eq:IB2}    
\end{align}
\end{lemma}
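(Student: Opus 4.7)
The strategy is to apply the Young--Gubinelli sewing lemma (the $\Lambda$-map of \cite{Gu04}) to the ``germ'' defined on the simplex by
\[
\Xi(x,y) := Y(x) \otimes \delta Z(x,y) \,+\, Y'(x)\,\mathbf{X}(x,y)\,Z'(x)^T,
\]
so that a single term of the Riemann sum in \eqref{eq:Riem-sum2} over a partition $\pi = \{x_i\}$ is exactly $\Xi(x_i, x_{i+1})$. The sewing lemma asserts that if $N\Xi \in \Omega C^{\gamma}$ for some $\gamma > 1$, then there exists $I \in C$, unique up to an additive constant, such that $\delta I - \Xi \in \Omega C^{\gamma}$, and moreover the remainder $Q := \delta I - \Xi$ satisfies $|Q|_\gamma \leq C |N\Xi|_\gamma$. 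The integral claimed in the statement is then $I(y) - I(x)$, and the bound \eqref{eq:IB1}--\eqref{eq:IB2} reduces to estimating $|N\Xi|_{3\alpha}$.

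The work is therefore concentrated in computing $N\Xi$ and reading off its $3\alpha$-H\"older norm. Since $N\delta Z = 0$, the first-order piece of $\Xi$ contributes
\[
Y(x)\otimes \delta Z(x,z) - Y(x)\otimes \delta Z(x,y) - Y(y)\otimes \delta Z(y,z) = -\delta Y(x,y) \otimes \delta Z(y,z).
\]
The second-order piece, upon inserting the rough-path identity \eqref{eq:cond-it-int}, i.e. $\mathbf{X}(x,z) - \mathbf{X}(x,y) - \mathbf{X}(y,z) = \delta X(x,y)\otimes \delta X(y,z)$, yields
\[
Y'(x)\,\delta X(x,y)\otimes \delta X(y,z)\,Z'(x)^T \,-\, \delta Y'(x,y)\,\mathbf{X}(y,z)\,Z'(x)^T \,-\, Y'(y)\,\mathbf{X}(y,z)\,\delta Z'(x,y)^T.
\]
Substituting the controlled-path decompositions $\delta Y = Y' \delta X + R_Y$ and $\delta Z = Z'\delta X + R_Z$ into the first-order contribution produces two terms of type $\delta X \otimes \delta X$ that cancel exactly against the first term of the second-order contribution. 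What remains is a sum of four products of schematic shapes $R_Y \cdot \delta Z$, $Y' \cdot \delta X \cdot R_Z$, $\delta Y' \cdot \mathbf{X} \cdot Z'$ and $Y' \cdot \mathbf{X} \cdot \delta Z'$ (together with one cross term $Y'\cdot \delta X \cdot \delta Z' \cdot \delta X$ absorbable into the above). Each surviving term carries total H\"older exponent exactly $3\alpha$, and bounding each factor by its corresponding semi-norm yields \eqref{eq:IB2}.

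Convergence of the Riemann sums \eqref{eq:Riem-sum2} to $I(1) - I(0)$ now follows quickly. Writing $\sum_i \Xi(x_i, x_{i+1}) = (I(1) - I(0)) - \sum_i Q(x_i, x_{i+1})$ and using $|Q(x_i, x_{i+1})| \leq |Q|_{3\alpha} |x_{i+1} - x_i|^{3\alpha}$, the residual telescoping sum is bounded by $|Q|_{3\alpha}\, |\pi|^{3\alpha - 1}$, which vanishes as $|\pi| \to 0$ since $\alpha > 1/3$. Bilinearity of $(Y,Z) \mapsto \int Y \otimes \mathrm{d} Z$ follows from bilinearity of $\Xi$ in $(Y,Z)$ combined with uniqueness in the sewing lemma.

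The main obstacle of the argument is the exact algebraic cancellation of the two $\delta X(x,y)\otimes \delta X(y,z)$ contributions when computing $N\Xi$. Without it, $N\Xi$ would only be of order $2\alpha$, which is insufficient to apply the sewing lemma; this cancellation is precisely the \emph{raison d'\^etre} of the Chen-type relation \eqref{eq:cond-it-int} imposed on $\mathbf{X}$ together with the controlled-path decomposition \eqref{eq:contr-rp}. The threshold $\alpha > 1/3$ enters at exactly this point, since the sewing lemma requires $3\alpha > 1$.
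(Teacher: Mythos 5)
The paper does not supply a proof of this lemma --- it simply cites \cite[Thm~1 and Cor.~2]{Gu04}. Your proof is precisely Gubinelli's original sewing-lemma argument and it is correct: you form the germ $\Xi$, verify via the Chen identity \eqref{eq:cond-it-int} and the controlled decompositions that the two $\delta X(x,y)\otimes\delta X(y,z)$ contributions cancel so that $N\Xi\in\Omega C^{3\alpha}$, and then read off the integral and the remainder bound from the $\Lambda$-map. One cosmetic remark: the cross term $Y'(x)\,\delta X(x,y)\otimes \delta Z'(x,y)\,\delta X(y,z)$ that survives the cancellation gives $|X|_\alpha^2\,|Y'|_0\,|Z'|_\alpha$, whereas \eqref{eq:IB2} as quoted in the paper writes $|X|_\alpha^2\,|Y'|_0\,|Z|_\alpha$; these are not literally the same quantity, but the discrepancy is in the paper's transcription of the cited bound rather than in your argument, and in all the paper's subsequent uses the two are interchangeable (both are dominated by the simplified bound \eqref{eq:IB3}).
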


\begin{remark}
In the simplest possible  (one-dimensional) case where $Z(x) =x$, $Y(x)$ is a $C^2$ function and $x_i=\frac{i}{N}$ the second order approximation \eqref{eq:Riem-sum2} corresponds to 
\begin{equation}
\sum_{i} Y (x_i) \big(x_{i+1}-x_{i}  \big) +  \frac{1}{2} Y'(x_i) \big(x_{i+1}-x_{i}  \big)^2,
\end{equation}
and the convergence of this approximation towards $\int Y \mathrm{d}x$ is of order $N^{-2}$ instead of $N^{-1}$ for the simple approximation $\sum_{i} Y (x_i) \big(x_{i+1}-x_{i}  \big)$.
\end{remark}

\begin{remark}
In most situations it will be sufficient to simplify the bounds \eqref{eq:IB1} and \eqref{eq:IB2} to 
\begin{equation}\label{eq:IB3}
\bigg| \int Y(z) \otimes \mathrm{d} Z(z) \bigg|_{\alpha} \leq C \Big(1 + |(X,\mathbf{X})|_{\mathcal{D}^\alpha}^2 \Big)|Y|_{\CaX} |Z|_{\mathcal{C}^\alpha_X},
\end{equation}
Note however that in the original bounds  \eqref{eq:IB1} and \eqref{eq:IB2}, no term including either the product  $ |R_Y |_{2 \alpha} |R_Z|_{2 \alpha}$ or the product $|Y'|_\alpha |Z'|_\alpha$ appears. We will need this fact when deriving a priori bounds in Section \ref{sec:Solutions}.
\end{remark}

The rough integral also possesses continuity properties with respect to different rough paths. More precisely we have:

\begin{lemma}[\cite{Gu04} page 104]\label{lem:Gub-int-stab}
Suppose $(X, \XX), (\bar{X}, \bar{\XX})  \in \mathcal{D}^\alpha$ and $Y,Z \in \mathcal{C}_X^{\alpha}$ as well as $\bar{Y},\bar{Z} \in \mathcal{C}_{\bar{X}}^{\alpha}$ for some $\alpha  > \frac{1}{3}$. Then we get the following bound
\begin{align}\label{eq:IB4}
\Big| \int  Y(z) \otimes  \mathrm{d} Z(z) & - \int \bar{Y}(z) \otimes  \mathrm{d} \bar{Z}(z) \Big|_\alpha \,  \notag\\
\leq \, & C \big( |Y|_{\CaX} + |\bar{Y}|_{\mathcal{C}^\alpha_{\bar{X}}}     \big)  \big( |Z|_{\CaX} + |\bar{Z}|_{\mathcal{C}^\alpha_{\bar{X}}}    \big) \big( |X-\bar{X} |_{C^\alpha} +  |\XX-\bar{\XX} |_{2 \alpha} \big)     \notag\\
 & + C  \big( |Z|_{\CaX} + |\bar{Z}|_{\mathcal{C}^\alpha_{\bar{X}}}    \big)  \big( 1+ \big| \big( X,\XX \big) |_{\Da} + \big| \big(\bar{ X},\bar{\XX} \big) |_{\Da}  \big) \notag\\
 & \qquad \qquad  \cdot \big( |Y- \bar{Y}|_{C^\alpha} +   |Y'- \bar{Y'}|_{C^\alpha}+  |R_Y - R_{\bar{Y}}|_{2 \alpha}   \big) \notag\\
 &+ C   \big( 1+ \big| \big( X,\XX \big) |_{\Da} + \big| \big(\bar{ X},\bar{\XX} \big) |_{\Da}  \big)  \big( |Y|_{\CaX} + |\bar{Y}|_{\mathcal{C}^\alpha_{\bar{X}}}    \big) \notag\\
 & \qquad \qquad  \cdot \big( |Z- \bar{Z}|_{C^\alpha} +   |Z'- \bar{Z'}|_{C^\alpha}+  |R_Z - R_{\bar{Z}}|_{2\alpha}   \big).  
\end{align}
\end{lemma}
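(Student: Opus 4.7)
The strategy is to treat Lemma \ref{lem:Gub-int-stab} as a continuity statement for the bilinear rough-integral map constructed in Lemma \ref{lem:Gub-int}. I would start from the integral representation \eqref{eq:IB1} applied to both integrals, decompose the difference, and estimate each piece by telescoping to isolate single ``difference factors''.

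Concretely, I would write
\[
\int_x^y Y \otimes \d Z - \int_x^y \bar Y \otimes \d \bar Z = A(x,y) + B(x,y) + (Q - \bar Q)(x,y),
\]
where $A(x,y) = Y(x)\otimes \delta Z(x,y) - \bar Y(x)\otimes \delta \bar Z(x,y)$ is the first-order part, $B(x,y) = Y'(x)\XX(x,y)Z'(x)^T - \bar Y'(x)\bar\XX(x,y)\bar Z'(x)^T$ is the rough correction, and $Q, \bar Q$ are the $3\alpha$-Hölder remainders produced by the sewing construction. For $A$ I would telescope as
\[
A(x,y) = (Y - \bar Y)(x) \otimes \delta Z(x,y) + \bar Y(x) \otimes \delta (Z - \bar Z)(x,y),
\]
each piece of which is controlled in $\alpha$-Hölder norm by the products appearing on the right-hand side of \eqref{eq:IB4}. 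For $B$ a three-way telescoping in $Y'$, $\XX$, and $(Z')^T$ works analogously, generating the factors $|Y' - \bar Y'|_{C^\alpha}$, $|\XX - \bar\XX|_{2\alpha}$, and $|Z' - \bar Z'|_{C^\alpha}$ multiplied by the boundedness of the surviving ingredients.

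The main technical step is bounding $|Q - \bar Q|_{\alpha}$, for which it suffices to bound the finer norm $|Q - \bar Q|_{3\alpha}$. I would reprise the sewing construction behind Lemma \ref{lem:Gub-int}: one has $Q = \Lambda(N\Xi)$ where $\Xi(x,y) = Y(x) \otimes \delta Z(x,y) + Y'(x)\XX(x,y) Z'(x)^T$ and $\Lambda$ is the bounded linear sewing map on three-parameter defects of regularity greater than $1$. By linearity of $\Lambda$,
\[
Q - \bar Q = \Lambda\bigl(N(\Xi - \bar\Xi)\bigr),
\]
so it suffices to bound the three-parameter defect $N(\Xi - \bar\Xi)$ in the appropriate norm. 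Using $N\delta = 0$ and the algebraic relation $N\XX(x,y,z) = \delta X(x,y) \otimes \delta X(y,z)$, a direct computation shows that $N\Xi$ is an explicit polynomial expression in $R_Y, R_Z, \delta X, \XX, Y', Z'$; subtracting the barred analogue and telescoping each monomial to isolate a single ``difference factor'' produces exactly the mixed products on the right-hand side of \eqref{eq:IB4}.

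The main obstacle I anticipate is the bookkeeping: the defect $N\Xi$ is a sum of several products of the above ingredients, and each must be telescoped carefully so that the final bound only involves the mixed norms shown in \eqref{eq:IB4}, without producing spurious terms like $|R_Y|_{2\alpha}|R_{\bar Z}|_{2\alpha}$ or $|Y'|_\alpha |\bar Z'|_\alpha$ (the same subtlety already appears in the bound \eqref{eq:IB2} of Lemma \ref{lem:Gub-int}). This is essentially an accounting problem rather than a conceptual one, and is resolved by the same grouping used for the single-integral bound, now applied to both the $(X,\XX)$- and $(\bar X, \bar\XX)$-germs simultaneously.
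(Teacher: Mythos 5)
The paper does not prove Lemma \ref{lem:Gub-int-stab}; it is cited to \cite{Gu04}. Your strategy---decomposing via \eqref{eq:IB1} into the first-order term $A$, the L\'evy-area term $B$, and the sewing remainder $Q - \bar Q$; telescoping $A$ and $B$; and controlling $Q - \bar Q = \Lambda\big(N(\Xi - \bar\Xi)\big)$ by linearity of the sewing map together with the Chen relation \eqref{eq:cond-it-int}---is correct and is essentially Gubinelli's original argument. The reductions $|Q - \bar Q|_\alpha \leq |Q - \bar Q|_{3\alpha}$ and $|B|_\alpha \leq |B|_{2\alpha}$ on $[0,1]$ are exactly the right observation.

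One small inaccuracy in your bookkeeping worry: products like $|R_Y|_{2\alpha}|R_{\bar Z}|_{2\alpha}$ or $|Y'|_\alpha|\bar Z'|_\alpha$ are not in fact spurious for \eqref{eq:IB4}---they sit inside the first summand $\big(|Y|_{\CaX} + |\bar Y|_{\mathcal{C}^\alpha_{\bar X}}\big)\big(|Z|_{\CaX} + |\bar Z|_{\mathcal{C}^\alpha_{\bar X}}\big)\big(|X-\bar X|_{C^\alpha} + |\XX - \bar\XX|_{2\alpha}\big)$, since the controlled-rough-path norm contains the remainder and derivative seminorms. What the telescoping must actually achieve is that each monomial in $N\Xi - N\bar\Xi$ carry exactly one difference factor (one of $X-\bar X$, $\XX-\bar\XX$, $Y-\bar Y$, $Y'-\bar Y'$, $R_Y - R_{\bar Y}$, or the $Z$-analogues) so that every contribution sorts into one of the three lines of \eqref{eq:IB4}. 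The remark following \eqref{eq:IB3} about absence of $|R_Y|_{2\alpha}|R_Z|_{2\alpha}$ and $|Y'|_\alpha|Z'|_\alpha$ in \eqref{eq:IB2} is a separate, stronger structural property needed only for the a priori bounds in Section~\ref{sec:Solutions}; it is not a constraint you need to reproduce here.
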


This bound behaves as if the rough integral were trilinear in $Y,Z$, and $X$. Unfortunately, this statement makes no sense, as $\Da$ is not a vector space, and $Y$ and $\tilde{Y}$ (resp. $Z$ and $\tilde{Z}$) take values in different spaces $\mathcal{C}^\alpha_X$ and $\mathcal{C}^\alpha_{\bar{X}}$ .

%
%
We will need the following Fubini type theorem for rough integrals:
\begin{lemma}\label{lem:fubini}
Let $(X,\mathbf{X})$ be an $\alpha$-rough path and $Y,Z  \in \mathcal{C}_X^{\alpha}$. Furthermore, assume that $f=f(\lambda,y), \, (\lambda,y) \in [0,1]^2$ is a function which is continuous in $\lambda$ and uniformly $C^1$ in $y$. Denote by $W_\lambda(y)= f(\lambda,y)Y(y)$ and by $W(y)= \big( \int_0^1 f(\lambda,y)\, \mathrm{d}\lambda \big) \, Y(y)$. Then $W$ as well as the $W_\lambda$ are rough path controlled by $X$ and the following Fubini type property holds true
\begin{equation}\label{eq:Fub}
\int_0^1 \Big( \int_0^1 f(\lambda,x)\, \mathrm{d}\lambda \Big)  \, Y(x) \, \mathrm{d} Z(x) \, = \int_0^1 \Big( \int_0^1 f(\lambda,x)\,   \, Y(x) \, \mathrm{d} Z(x) \Big)\,  \mathrm{d}\lambda. 
\end{equation}    
\end{lemma}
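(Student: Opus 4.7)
The plan is to verify directly that the Riemann sums defining $\int W\,\d Z$ can be rewritten as integrals over $\lambda$ of Riemann sums defining $\int W_\lambda\,\d Z$, and then to interchange a limit (mesh size going to zero) with the $\lambda$-integration using a uniform-in-$\lambda$ convergence estimate. The main technical point is showing that the controlled-rough-path norm of $W_\lambda$, and hence the rate of convergence of its associated Riemann sums, can be bounded independently of $\lambda$.

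First I would verify that each $W_\lambda$ and $W$ are rough paths controlled by $X$, with explicit Gubinelli derivatives $W_\lambda'(x)=f(\lambda,x)\,Y'(x)$ and $W'(x) = \bigl(\int_0^1 f(\lambda,x)\,\d\lambda\bigr)\,Y'(x)$. Using $\delta Y(x,y)=Y'(x)\delta X(x,y)+R_Y(x,y)$ one writes
\begin{equation*}
\delta W_\lambda(x,y) = f(\lambda,x)Y'(x)\,\delta X(x,y) + f(\lambda,x)R_Y(x,y) + \bigl(f(\lambda,y)-f(\lambda,x)\bigr)Y(y),
\end{equation*}
so that the remainder is bounded in the $2\alpha$-Hölder norm by $C(1+\|f\|_\infty+\|\partial_y f\|_\infty)\,|(Y,Y')|_{\CaX}$, with $C$ independent of $\lambda$. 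The same estimate, averaged over $\lambda$, controls $W$.

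Next, fix a partition $\pi=\{0=x_0<\dots<x_N=1\}$ and let $S_N^\lambda$ denote the Riemann sum from Lemma \ref{lem:Gub-int} associated with $(W_\lambda,W_\lambda')$ and $Z$, and $S_N$ the analogous sum for $(W,W')$ and $Z$. Since $W$ and $W'$ are pointwise integrals over $\lambda$ of $W_\lambda$ and $W_\lambda'$, and the sum is finite, we obtain $S_N=\int_0^1 S_N^\lambda\,\d\lambda$. By Gubinelli's sewing bound (the proof of Lemma \ref{lem:Gub-int} gives an error estimate of order $|\pi|^{3\alpha-1}$ involving the norms of $W_\lambda$, $Z$, and $(X,\XX)$), and by Step~1, the convergence $S_N^\lambda\to \int_0^1 W_\lambda\,\d Z$ holds uniformly in $\lambda$. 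Continuity of $f$ in $\lambda$ ensures that $\lambda\mapsto S_N^\lambda$ is continuous, hence so is its uniform limit $\lambda\mapsto \int_0^1 W_\lambda(x)\,\d Z(x)$. Passing to the limit $|\pi|\to0$ gives on the one hand $S_N\to \int_0^1 W\,\d Z$, and on the other hand, by dominated (in fact uniform) convergence, $\int_0^1 S_N^\lambda\,\d\lambda \to \int_0^1\int_0^1 f(\lambda,x)Y(x)\,\d Z(x)\,\d\lambda$. Equating these two limits yields \eqref{eq:Fub}.

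The only real obstacle is the uniformity in $\lambda$ of the sewing error, which reduces to the uniform bound on $|(W_\lambda,W_\lambda')|_{\CaX}$ provided by the uniform $C^1$-regularity of $f$ in $y$; once this is in place, the Fubini identity reduces to a classical interchange of limit and integration.
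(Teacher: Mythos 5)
Your proof is correct and takes essentially the same route as the paper: decompose $W$ and $W_\lambda$ explicitly as $X$-controlled rough paths, write the rough integral of $W$ as the limit of second-order Riemann sums, observe by linearity that each such sum equals the $\lambda$-integral of the corresponding sum for $W_\lambda$, and then exchange the limit with the $\lambda$-integral using the uniform-in-$\lambda$ sewing error estimate provided by the uniform $C^1$-bound on $f(\lambda,\cdot)$. (Your sign in the remainder term $\bigl(f(\lambda,y)-f(\lambda,x)\bigr)Y(y)$ is in fact the correct one; the paper's displayed decomposition has a harmless sign typo there.)
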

\begin{proof}
From the assumptions on $f$ it follows directly that $W$ is an $X$-controlled rough path whose decomposition is given as 
\begin{align}
\delta W(x,y) \,= \,& \bigg( \int_0^1 f(\lambda,x) \, \mathrm{d} \lambda \bigg) \  Y'(x) \delta X(x,y) +  \bigg( \int_0^1 f(\lambda,x) \, \mathrm{d} \lambda \bigg) R_Y(x,y) \notag\\
& \qquad + \bigg( \int_0^1 f(\lambda,x) - f(\lambda,y)\, \mathrm{d} \lambda \bigg) Y(y),
\end{align}
and similarly for the $W_\lambda$. Then we get from the definition of the rough integral \eqref{eq:Riem-sum2}, \eqref{eq:IB1} and \eqref{eq:IB2} that 
\begin{align}
\int_0^1  W(x) \, & \mathrm{d}  Z(x) \notag\\
\, = \,&  \lim \sum_i W(x_i) \delta Z(x_i,x_{i+1}) + W'(x_i) \XX(x_i,x_{i+1}) Z'(x_i)^T \notag\\
= \, & \lim  \int_0^1 \Big(  \sum_i W_\lambda (x_i) \delta Z(x_i,x_{i+1}) + W_\lambda '(x_i) \XX(x_i,x_{i+1}) Z'(x_i)^T  \Big) \, \d \lambda. 
\end{align}
Where the limit is taken as the mesh of the partition $0 \leq x_1 \leq  \cdots \leq x_N  \leq 1$ goes to zero.  As all the error estimates on the convergence of the limit are uniform in $\lambda$ the integral and the limits commute. 
\end{proof}


To end this section we quote a version of a classical regularity statement due to Garsia, Rodemich and Rumsey \cite{GRR70} which has proven to be very useful to give quantitative statements about the regularity of a random paths.

\begin{lemma}[Lemma 4 in \cite{Gu04}] \label{lem:GRR}
Suppose $R \in \Omega C$. Denote by
\begin{gather}
U = \int_{[0,1]^2} \Theta  \bigg( \frac{R(x,y)}{p\big(|x-y|/4 \big)} \bigg) \, \d x \, \d y,   \label{eq:GRR1}
\end{gather} 
where $p: \R^+ \to \R^+$ is an increasing function with $p(0)=0$ and $\Theta$ is increasing, convex, and $\Theta(0)=0$. Assume that there is a constant $C$ such that 
\begin{gather}
\sup_{x \leq u \leq v \leq r \leq y} \Big| NR(u,v,r) \Big| \leq \Theta^{-1} \bigg( \frac{C}{|y-x|^2} \bigg) p(|y-x|/4), \label{eq:GRR2}
\end{gather}
for every $0 \leq x \leq y \leq 1$. Then 
\begin{gather}
\big| R(x,y)  \big| \leq 16 \int_0^{|y-x|} \bigg[ \Theta^{-1} \left( \frac{U}{r^2}\right)  +  \Theta^{-1} \left( \frac{C}{r^2}\right)  \bigg] \d p (r). \label{eq:GRR3}
\end{gather}  
for any $x,y \in [0,1]$.
\end{lemma}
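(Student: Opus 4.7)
The plan is to adapt the classical Garsia–Rodemich–Rumsey dyadic chopping argument for real-valued paths to the two-parameter object $R \in \Omega C$. In the classical setting the telescoping rests on the triangle inequality; here it is replaced by the defect identity $R(u,w) = R(u,v) + R(v,w) + NR(u,v,w)$, so each splitting accumulates an extra $NR$ term. Hypothesis \eqref{eq:GRR1} controls the ``natural'' increments of $R$ and yields the first integral on the right of \eqref{eq:GRR3}; hypothesis \eqref{eq:GRR2} controls the defect corrections and produces the second integral.

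First I would set $d = y-x$ and $d_n = d/2^n$, and construct a dyadic sequence of intermediate points $x = x_0 < x_1 < x_2 < \cdots \to y$ with $|x_{n+1} - x_n| \asymp d_n$ and
\[
|R(x_n, x_{n+1})| \leq p(|x_{n+1}-x_n|/4)\,\Theta^{-1}\bigl(cU/|x_{n+1}-x_n|^2\bigr)
\]
for a universal $c$. The existence of such points at each scale comes from a Chebyshev/averaging argument: since the integrand in \eqref{eq:GRR1} has total mass at most $U$ over $[0,1]^2$, restricting to a subsquare of size $\asymp d_n$ forces the existence of an argument in that subsquare where the integrand (and hence $R$ on a scale $\asymp d_n$) satisfies the stated bound. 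A symmetric sequence $y = y_0 > y_1 > \cdots \to x$ is built analogously from the $y$-side, which will be convenient for bookkeeping later.

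Next I would iterate the defect identity to obtain, by induction on $n$,
\[
R(x_0, x_n) = \sum_{k=0}^{n-1} R(x_k, x_{k+1}) + \sum_{k=1}^{n-1} NR(x_0, x_k, x_n),
\]
pass to the limit $n \to \infty$ using continuity of $R$ on the diagonal (so $R(x_n, y) \to 0$), and similarly for the $y$-side, joining the two sequences at a common midpoint to produce $R(x,y)$. The first sum is controlled by $\sum_n p(d_n/4)\Theta^{-1}(cU/d_n^2)$, which is a dyadic Riemann sum for $\int_0^d \Theta^{-1}(cU/r^2)\,dp(r)$. For the $NR$-sum, I would apply \eqref{eq:GRR2} at each scale to a subinterval of length $\asymp d_k$, producing a term bounded by $p(d_k/4)\Theta^{-1}(C/d_k^2)$, which sums to the second integral $\int_0^d \Theta^{-1}(C/r^2)\,dp(r)$.

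The main obstacle is precisely the bookkeeping of the $NR$-sum. Naively each term $NR(x_0, x_k, x_n)$ sees the full scale $d$ rather than the local scale $d_k$, so a direct application of \eqref{eq:GRR2} would only give a constant bound per term and the sum would diverge. The fix is to regroup the defect terms telescopically: by alternately refining from the left and right (using both $\{x_n\}$ and $\{y_n\}$) and pairing adjacent refinements, each $NR$-correction can be associated to a triple of points lying within an interval of length comparable to its dyadic scale $d_k$, at which \eqref{eq:GRR2} gives exactly the scale-dependent bound needed. Once this regrouping is set up, the two dyadic sums convert to the asserted Riemann–Stieltjes integrals, with the overall constant $16$ coming from the factor $4$ in $p(|x-y|/4)$ combined with the standard dyadic-to-integral conversion.
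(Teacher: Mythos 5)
The paper does not prove this lemma; it cites it from \cite{Gu04}, so I am judging your sketch against the standard (Gubinelli-style) argument. Your high-level plan — dyadic chopping, Chebyshev selection of good points, telescoping via the defect identity, and conversion of the resulting dyadic sums into the two Riemann--Stieltjes integrals — is the right shape. But the telescoping step contains a genuine error, and the ``fix'' you propose is addressing a difficulty that a correctly written telescope does not have.

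First, the displayed identity
\[
R(x_0,x_n)=\sum_{k=0}^{n-1}R(x_k,x_{k+1})+\sum_{k=1}^{n-1}NR(x_0,x_k,x_n)
\]
is simply false: already for $n=3$ one gets $R(x_0,x_3)=\sum_{k}R(x_k,x_{k+1})+NR(x_0,x_1,x_2)+NR(x_0,x_2,x_3)$, not $NR(x_0,x_1,x_3)+NR(x_0,x_2,x_3)$. More importantly, even the corrected version in which all defect terms are anchored at the moving end $x_0$ leads you into the scale problem you then try to repair. The resolution is not a regrouping trick but simply to telescope towards a \emph{fixed} endpoint. Writing
\[
R(x_0,y)=R(x_0,x_1)+R(x_1,y)+NR(x_0,x_1,y),
\]
then expanding $R(x_1,y)$ the same way, and so on, gives
\[
R(x_0,y)=\sum_{k=0}^{n-1}R(x_k,x_{k+1})+R(x_n,y)+\sum_{k=0}^{n-1}NR(x_k,x_{k+1},y).
\]
Here $R(x_n,y)\to0$ as $x_n\to y$, and each defect term $NR(x_k,x_{k+1},y)$ has all three arguments inside the interval $[x_k,y]$, whose length is comparable to the dyadic scale $d_k$. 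Hypothesis \eqref{eq:GRR2} therefore applies at scale $d_k$ directly, with exactly one defect term per level, and the sum is the Riemann sum for $\int_0^d\Theta^{-1}(C/r^2)\,dp(r)$. There is nothing to regroup.

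Your proposed repair (``alternately refining from left and right and pairing adjacent refinements'') is vague, and its most natural reading — a binary-tree subdivision where each internal node contributes one defect term — actually fails: at depth $k$ there are $2^k$ nodes each contributing $\Theta^{-1}(C/d_k^2)p(d_k/4)$, and in the model case $\Theta(u)=u^p$, $p(r)=r^{\alpha+2/p}$ this gives $\sum_k 2^k d_k^{\alpha}=d^\alpha\sum_k 2^{k(1-\alpha)}$, which diverges. So the regrouping idea would need substantial new content, whereas the single-chain telescope towards the far endpoint is both standard and immediately correct. The construction of the sequence $x_n$ by a doubling-Chebyshev argument, the choice of a good intermediate point $t_0$ (needed so that you can telescope from both $x$ and $y$ towards $t_0$ and then combine with a single full-scale defect term $NR(x,t_0,y)$), and the final dyadic-sum-to-integral comparison are all as you describe, once the telescope is written the right way around.
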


This version of the Garsia-Rodemich-Rumsey Lemma is slightly more general than the usual one (see e.g. \cite[Thm A.1]{FV10}). The classical version treats the case of a functions $f$ on $[0,1]$ taking values in a metric space. Then the same conclusion holds if one replaces $R(x,y)$ with $d \big(A(x),A(y) \big)$ and $C=0$.  In the case that we will mostly use $\Theta(u)=u^p$ and $p(x)=x^{\alpha+2/p}$. Then  Lemma  \ref{lem:GRR} states 
\begin{equation}
|f|_\alpha \leq C \bigg(   \int_{[0,1]^2}  \frac{d\big(f(x),f(y) \big)^p}{|x-y|^{\alpha p+2}}  \, \d x \, \d y \bigg)^{1/p} ,
\end{equation} 
which is essentially a version of Sobolev embedding theorem.

\section{Main results}
\label{sec:MR}

Now we are ready to discuss solutions of the equation \eqref{eq:Stoch-Burg}.  Anticipating Proposition \ref{prop:Gau-conv-rp} we will use that the solution $X$ to linear stochastic heat equation
\[
\d X = \Delta X \d t + \d W_t
\]  
can be viewed as a rough path valued process. To be more precise for every $t$ there exists a canonical choice of $\XX(t,\cdot, \cdot) \in \Omega C^{2\alpha}$ such that the pair $(X(t), \XX(t) )$ is a rough path in the space variable $x$. Furthermore the process $t \mapsto  (X(t), \XX(t) )$ is almost surely continuous in rough path topology. We will use this observation to make sense of the non-linearity by assuming that for every $t$ the solution $u(t,\cdot)$ is a rough path controlled by $X(t,\cdot)$.


\begin{definition}\label{def:weak-sol}
A weak solution to \eqref{eq:Stoch-Burg} is an adapted process $u=u(t,x)$ that takes values in  $C^{\alpha/2,\alpha}\big([0,T] \times [0,1]   \big) \cap L^2([0,T];\CaX)$ such that for every smooth periodic test function $\varphi$ the following equation holds
\begin{align}
\langle \varphi,  u(t) \rangle \, = \, & \langle \varphi,  u_0 \rangle + \int_0^t \! \langle  \Delta \varphi,  u(s) \rangle \mathrm{d}s +  \int_0^t  \! \!\Big(  \int_0^1 \varphi(x) g\big(u(s,x) \big)\  \mathrm{d}_x u(s,x)   \Big) \,\mathrm{d}s  \notag\\
   &+\int_0^t \langle \varphi(x) \, \theta \big( u(s,x) \big) , \mathrm{d}W(s) \rangle. \label{eq:weak-sol}
\end{align}
\end{definition}

Here by $u \in  L^2([0,T];\CaX)$ we mean that almost surely for every $t$ the function $u(t, \cdot)$ is  controlled by the random rough path $\big(X( t, \cdot ) , \XX(t, \cdot) \big)$ and $t \mapsto |u|_{\mathcal{C}_{X(t)}^\alpha}$ is almost surely in $L^2$. By $C^{\alpha/2,\alpha}\big([0,T] \times [0,1]   \big)$ we denote the space of functions $u$ with finite parabolic H\"older norm 

\begin{equation}\label{eq:par-hol}
\| u \|_{C^{\alpha/2,\alpha} }= \sup_{s \neq t, x \neq y} \frac{|u(s,x) -u(t,y) |}{|x-y|^{\alpha}- |s-t|^{\alpha/2}} + \| u\|_0.
\end{equation}

The spatial integral in the third term on the right hand side of \eqref{eq:weak-sol} is a rough integral. To be more precise, as for every $s$ the function $u(s, \cdot) $ is controlled by $X(s, \cdot)$ and $g$ and $\varphi$ are smooth functions,   $\varphi g(u(s,\cdot))$ is also controlled by $X(s, \cdot)$. Thus the spatial integral can be defined as in \eqref{eq:IB1},\eqref{eq:IB2}. This integral is the the limit of the second order Riemann sums \eqref{eq:Riem-sum2} and thus in particular measurable in $s$. For every $s$ this integral is bounded by $C |g|_{C^2} |\varphi |_{C^1} |u(s)|^2_{\mathcal{C}^\alpha_{X(s)}}(1+|(X(s), \XX(s))|_{\Da})$. So the outer integral can be defined as a Lebesgue integral.

A priori this definition depends on the choice of initial condition for the Gaussian reference rough path $X$. In our construction we will use zero initial data but this is not important. In the construction of global solutions we will in fact concatenate the constructions obtained this way and thus restart $X$ at deterministic time steps. This does not alter the definition of solution: Indeed, if $u(t)$ is controlled by $X(t)$ with rough path decomposition given as 
\[
\delta u(t,x,y) \,= \, u'(t,x) \delta X(t,x,y) + R^u(t,x,y)
\] 
and the modified rough path $\tilde{X}$ is given as the solution to the stochastic heat equation started at zero at time $t_0$ then for $t \geq t_0$ one has 
\[
\tilde{X}(t,x) = X(t,x) - S(t-t_0) X(t_0, \cdot) (x).
\] 
Therefore, $u$ has a rough path decomposition with respect to $\tilde{X}$ which is given as
\[
\delta u(t,x,y) \,= \, u'(t,x) \delta \tilde{X}(t,x,y) + R^u(t,x,y) - u'(t,x) \delta S(t-t_0) X(t_0) (x,y).
\] 
So $u$ is also controlled by $\tilde{X}$. Due to standard regularisation properties of the heat semigroup 
\[
| u'(t) \delta S(t-t_0) X(t_0) |_{2 \alpha} \leq C |u'|_0 (t-t_0)^{-\frac{\alpha}{2}} |X(t_0)|_\alpha. 
\]
This is square integrable, and so $u$ satisfies the regularity assumption with respect to this reference rough path  as well. Another choice of reference rough path would be to take the stationary solution to the stochastic heat equation 
\[
\d \tilde{X}=  \big( \Delta \tilde{X} - \tilde{X} \big) \d t + \d W_t.
\]
This would also yield the same solutions.


Our notion of a weak solution has an obvious \emph{mild solution} counterpart:

\begin{definition}\label{def:mild-sol}
A mild solution to \eqref{eq:Stoch-Burg} is a process $u$ such as in Definition \ref{def:weak-sol} with equation \eqref{eq:weak-sol} replaced by
\begin{align}
u(x,t)\, = \,&  S(t) u_0(x) + \int_0^t \Big(\int_0^1 \hat{p}_{t-s}(x-y) \, g(u(s,x)) \, \mathrm{d}_y u(s,y)  \Big) \, \mathrm{d}s  \notag\\
& +\int_0^t S(t-s) \, \theta(u(s))\, \mathrm{d}W(s) (x). \label{eq:mild-sol}
\end{align} 
\end{definition}

Here $S(t)=e^{\Delta t}$ denotes the heat semigroup with periodic boundary conditions, given by convolution with the heat kernel $\hat{p}_t$.  As above the spatial integral involving the nonlinearity $g$ is a rough integral. 
\begin{remark}
In Section \ref{sec:Solutions}  we will see that the term involving the nonlinearity $g(u)$  on the right hand side of \eqref{eq:mild-sol} is always $C^1$ in space. Thus, using Proposition \ref{prop:Stoch-Conv-Rough-Path}   we will show that the controlled rough path decompositon of $u(t, \cdot)$ is given as
\begin{equation}\label{eq:rp-dec-u}
\delta u(t, x,y) = \theta \big(u(t,x) \big) \, \delta X(t,x,y) + R(t,x,y).
\end{equation}
\end{remark}

 As expected the notions of weak and mild solution coincide:
%


\begin{proposition}\label{prop:weak-mild}
Every weak solution to \eqref{eq:Stoch-Burg} is a mild solution and vice versa.
\end{proposition}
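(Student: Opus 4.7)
The plan is to prove both implications by the standard variation-of-constants template, the novelty being that Lemma~\ref{lem:fubini} plays the role of Fubini's theorem for the nonlinear term (which involves a rough integral in space).

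For the direction weak $\Rightarrow$ mild, I would fix a smooth periodic $\psi$ and insert into \eqref{eq:weak-sol} the time-dependent test function $\varphi_s(y) := (S(t-s)\psi)(y)$, which satisfies the backward heat equation $\partial_s \varphi_s + \Delta \varphi_s = 0$. Since \eqref{eq:weak-sol} only allows fixed test functions, I would justify this by taking a partition $0 = t_0 < \cdots < t_N = t$ of $[0,t]$, applying \eqref{eq:weak-sol} on each $[t_i, t_{i+1}]$ with the frozen test function $S(t-t_i)\psi$, and summing. The boundary contributions telescope into $\langle \psi, u(t)\rangle - \langle S(t)\psi, u_0\rangle$ (after using the self-adjointness identity $\langle S(t-t_i)\psi, u(t_i)\rangle - \langle S(t-t_{i-1})\psi, u(t_i)\rangle$ and a discrete summation by parts). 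The terms $\int_{t_i}^{t_{i+1}} \langle \Delta S(t-t_i)\psi, u(s)\rangle\,\d s$ approximate $-\int_0^t \langle \partial_s \varphi_s, u(s)\rangle \d s$, and in the limit they cancel against the boundary telescoping by virtue of $\partial_s S(t-s)\psi = -\Delta S(t-s)\psi$. This leaves
\begin{align*}
\langle \psi, u(t)\rangle \,=\,& \langle S(t)\psi, u_0\rangle + \int_0^t \!\!\int_0^1 (S(t-s)\psi)(x)\, g(u(s,x))\, \d_x u(s,x)\, \d s \\
&+ \int_0^t \langle (S(t-s)\psi)\,\theta(u(s)), \d W(s)\rangle.
\end{align*}
Writing $(S(t-s)\psi)(x) = \int_0^1 \hat p_{t-s}(x-y)\psi(y)\,\d y$, applying Lemma~\ref{lem:fubini} to pull the $y$-integral outside of the rough integral in $x$, and using the standard stochastic Fubini theorem for the noise term, one identifies the right-hand side with $\langle \psi, u_{\mathrm{mild}}(t)\rangle$, where $u_{\mathrm{mild}}(t)$ denotes the right-hand side of \eqref{eq:mild-sol}. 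Since $\psi$ is arbitrary in a dense set, $u(t) = u_{\mathrm{mild}}(t)$.

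The converse mild $\Rightarrow$ weak is obtained by reversing the chain: test \eqref{eq:mild-sol} against $\varphi$, use $\langle \varphi, S(t-s) v\rangle = \langle S(t-s)\varphi, v\rangle$, and write $S(t-s)\varphi = \varphi + \int_s^t \Delta S(r-s)\varphi\,\d r$. Substituting this identity into each term of the mild formulation, then swapping the $r$- and $s$-integrals (using Lemma~\ref{lem:fubini} for the nonlinear piece and ordinary Fubini for the rest), one recovers the weak formulation \eqref{eq:weak-sol}. The hypothesis $u \in L^2([0,T];\CaX)$ combined with the bound \eqref{eq:IB3} ensures that all the integrands involved are in $L^1$ in $s$, so every application of Fubini is legitimate.

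The main obstacle is controlling the rough integral uniformly across the approximating partitions in the first direction: one must show that $\int_{t_i}^{t_{i+1}} \int_0^1 (S(t-t_i)\psi)(x) g(u(s,x))\,\d_x u(s,x)\,\d s$ converges, as the mesh shrinks, to $\int_0^t \int_0^1 (S(t-s)\psi)(x) g(u(s,x))\,\d_x u(s,x)\,\d s$. By the continuity estimate \eqref{eq:IB3}, this difference is bounded by $C \sup_{s\in[t_i,t_{i+1}]} |S(t-t_i)\psi - S(t-s)\psi|_{C^1}\cdot(1 + |(X(s),\XX(s))|_{\Da}^2)|u(s)|_{\CaX}^2$. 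Smoothing by the heat semigroup gives uniform modulus-of-continuity control on $S(t-\cdot)\psi$ in $C^1$ away from $s=t$, while near $s=t$ one uses that the integrand is dominated by a fixed $L^1(\d s)$ function thanks to $u \in L^2([0,T];\CaX)$ and the moment bounds \eqref{eq:rp-mo-bo}. Dominated convergence then closes the argument; the stochastic term is handled analogously using the Burkholder--Davis--Gundy inequality.
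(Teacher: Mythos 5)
Your proof is correct and follows essentially the same route as the paper: both directions rest on the standard variation-of-constants / time-dependent test function argument (the paper cites \cite{dPZ} Lemma~5.5 where you spell out the telescoping), with Lemma~\ref{lem:fubini} playing exactly the role you identify for the rough spatial integral and the stochastic Fubini handling the noise term. The only cosmetic difference is in the mild~$\Rightarrow$~weak direction, where the paper tests the mild formula against $\Delta\varphi$ and integrates in time, whereas you test against $\varphi$ and then expand $S(t-s)\varphi = \varphi + \int_s^t \Delta S(r-s)\varphi\,\d r$; these are the same computation carried out in a different order.
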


%
%

\begin{proof}
The proof follows the lines of the standard proof (see e.g. \cite{dPZ} Theorem 5.4). We only need to check that the argument is still valid when we use rough integration. 

Assume first that $u$ is a mild solution to \eqref{eq:Stoch-Burg} and let us show that then \eqref{eq:weak-sol} holds. To this end let $\varphi$ be a periodic $C^2$ function. Testing $u$ against $\Delta \varphi$ and integrating in time we get
\begin{align}
\int_0^t \! \langle \Delta & \varphi, u(s) \rangle \, \d s\,  = \,   \int_0^t  \! \langle \Delta \varphi, S(s)u_0 \rangle \,  \d s + \int_0^t   \! \langle \Delta \varphi,    \Psi^{\theta(u)}(s) \rangle  \notag \\
& +  \int_0^t   \!   \int_0^s \Big( \int_0^1 \Delta \varphi(x)  \int_0^1 \hat{p}_{s-\tau}(x-y) \, g(u(\tau,x)) \, \mathrm{d}_y u(\tau,y)\,  \d x  \Big) \,  \d \tau  \, \d s.  \label{eq:wm1}
\end{align}
Let us treat the term involving the nonlinearity $g$ on the right hand side of \eqref{eq:wm1}. Using Fubini theorem for the temporal integrals and then the Fubini theorem for rough integrals \eqref{eq:Fub} we can rewrite this term as
\begin{align}
 \int_0^t   \! &    \Big(  \int_0^1 \Big[  \int_\tau^t  \int_0^1 \Delta \varphi(x) \,  \hat{p}_{s-\tau}(x-y)  \, \d x\, \d s \Big]\, g(u(\tau,x)) \, \mathrm{d}_y u(\tau,y)    \Big) \,   \d \tau  \notag\\
 &=    \, \int_0^t   \!     \Big(  \int_0^1 \! \Big[   \int_0^1  \varphi(x) \,  \hat{p}_{s}(x-y)\,  \d x  - \varphi(y)     \Big]\, g(u(\tau,x)) \, \mathrm{d}_y u(\tau,y)    \Big) \,   \d \tau  \label{eq:wm2}.
\end{align}
where we have used the definition of the heat semigroup in the first line. Similar calculations for the remaining terms in \eqref{eq:wm1} (see \cite{dPZ} Theorem 5.4 for details) show that
\begin{equation}\label{eq:wm3}
 \int_0^t  \! \langle \Delta \varphi, S(s)u_0 \rangle \,  \d s \, =\,  \langle \varphi , S(t) u_0 \rangle -  \langle \varphi, u_0 \rangle,  
\end{equation}
and 
\begin{equation}\label{eq:wm4}
\int_0^t  \! \langle \Delta \varphi,   \Psi^{\theta(u)}(s) \rangle  \,  \d s =  \langle \varphi,   \Psi^{\theta(u)}(s) \rangle  \rangle \, - \int_0^t  \! \langle \varphi,   \theta(u)(s) \d W(s) \rangle.   
\end{equation}
Thus summarising \eqref{eq:wm1} - \eqref{eq:wm4}  we get \eqref{eq:weak-sol}.

In order to see the converse implication, first note that in the same way as in (\cite{dPZ} Lemma 5.5) one can show that if $u$ is a weak solution the following identity holds if $u$ is tested against a smooth, periodic and time dependent test function $\varphi(t,x)$ 
\begin{align}
\langle \varphi(t) ,  u(t) \rangle \, = \, & \langle \varphi(0),  u_0 \rangle + \int_0^t \! \big\langle \big(  \Delta  \varphi(s) + \partial_s  \varphi(s)  \big),  u(s) \big\rangle \mathrm{d}s +  
   \int_0^t \langle \varphi(s) \, \theta \big( u(s) \big) , \mathrm{d}W(s) \rangle  \notag\\ 
    &+ \int_0^t  \! \!\Big(  \int_0^1 \varphi(s,x)
g\big(u(s,x) \big)\  \mathrm{d}_x u(s,x)   \Big) \,\mathrm{d}s  \label{eq:wm5}
\end{align}
Taking $\varphi(t,x)= S(t-s)  \xi(x)$ for a smooth function $\xi$ and using the definition of the heat semigroup one obtains
\begin{align}
\langle u(t) , \xi \rangle \, = \, & \langle S(t) \xi,   u_0 \rangle +  
   \int_0^t \! \langle S(t-s) \xi \, \theta \big( u(s) \big) , \d W(s) \rangle  \notag\\ 
    &+ \int_0^t  \! \!\Big(  \int_0^1 \! \int_0^1 \hat{p}_{t-s}(x-y)\,  \xi(y) \,  
g\big(u(s,x) \big)\  \mathrm{d}_x u(s,x)  \, \d y  \Big) \,\d s.   \label{eq:wm6}
\end{align}  
Thus using again the Fubini theorem for rough integrals as well as the symmetry of the heat semigroup one obtains  \eqref{eq:mild-sol}.   
\end{proof}


Weak solutions do indeed exist:
\begin{theorem}\label{thm:MR}
Fix $\frac{1}{3} < \alpha  < \beta <\frac{1}{2}$ and assume that $u_0 \in C^\beta$. Furthermore, assume that $g$ is a bounded $C^3$ function with bounded derivatives up to order $3$ and that $\theta$ is a bounded $C^2$ function with bounded derivatives up to order $2$. Then for every time interval $[0,T]$ there exists a unique weak/mild solution to \eqref{eq:Stoch-Burg}.
\end{theorem}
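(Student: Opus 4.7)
The plan is a nested (``two-level'') fixed point argument: an inner, pathwise fixed point that solves the rough PDE for a prescribed adapted noise coefficient $\eta$, and an outer fixed point in a space of stochastic processes that closes the self-consistency relation $\eta = \theta(u)$. I would first fix a realisation of the Gaussian reference rough path $(X(t,\cdot),\XX(t,\cdot))$ constructed from the solution of the linear stochastic heat equation via Lemma \ref{lem:gau-rp} together with the forthcoming Proposition \ref{prop:Gau-conv-rp}, and record that all its relevant $\Da$-norms have finite moments of every order. For an adapted process $\eta$ of suitable parabolic H\"older regularity I would then appeal to the main result of Section \ref{sec:Prel-Calc} to assert that $\Psi^\eta(t,\cdot) = \int_0^t S(t-s)\eta(s)\,\d W(s)$ is, for each $t$, controlled by $X(t,\cdot)$ with derivative $\eta(t,\cdot)$, as anticipated in the decomposition \eqref{eq:rp-dec-u}.

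For the inner fixed point, I would define
\[
\mathcal{M}_\eta[u](t)\,:=\, S(t) u_0 + \int_0^t S(t-s)\bigl[g(u(s))\,\partial_x u(s)\bigr]\,\d s + \Psi^\eta(t),
\]
where the inner rough integral is interpreted via \eqref{eq:IB1}--\eqref{eq:IB2} (for each $s$ the integrand is $X(s)$-controlled because $u(s)$ is and $g$ is smooth). Working in a suitable ball of $C^{\alpha/2,\alpha}([0,T^*]\times[0,1]) \cap L^2([0,T^*];\CaX)$, I would combine the rough integral bound with standard Schauder estimates for $S(t-s)$ acting on $u_0 \in C^\beta$ and on the drift (gaining a small power of $T^*$) to show that $\mathcal{M}_\eta$ maps the ball into itself, and then use Lemma \ref{lem:Gub-int-stab} and the Lipschitz property of $g$ to obtain a contraction for $T^*$ small enough. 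The unique fixed point $u_\eta$ then inherits the decomposition in \eqref{eq:rp-dec-u}, because the nonlinear term is $C^1$ in $x$ and thus contributes only to the remainder.

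For the outer fixed point I would consider $\Phi\colon \eta \mapsto \theta(u_\eta)$ on a space of adapted processes of the same parabolic regularity, equipped with an $L^p(\Omega)$-type norm large enough to absorb the random factors $|(X,\XX)|_{\Da}$. Using Lemma \ref{lem:Gub-int-stab} for the $\eta$-dependence of the solution map, the \BDG inequality for the continuity of $\eta\mapsto\Psi^\eta$, and the Lipschitz property of $\theta$ (following from $\theta \in C^2$ with bounded derivatives), one shows that $\Phi$ is a contraction for sufficiently small $T^*$, yielding the unique local mild solution; Proposition \ref{prop:weak-mild} then supplies the weak formulation. Extension to arbitrary $T$ follows by iterating the construction on subintervals: by the discussion following Definition \ref{def:weak-sol}, an $X$-controlled process restarts as a controlled process with respect to the rough path built from the restarted heat equation, and the boundedness of $g$, $\theta$ and their derivatives provides a priori moment bounds (via a Gronwall-type argument) that prevent blow-up.

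The main obstacle is the inner fixed point: although the simplified bound \eqref{eq:IB3} is tempting, it produces a term proportional to $|u|_{\CaX}^2$ that cannot be absorbed into a contraction estimate. As explicitly flagged in the remark following Lemma \ref{lem:Gub-int}, one has to exploit the sharper structure of \eqref{eq:IB1}--\eqref{eq:IB2}, which avoids the cross products $|R_Y|_{2\alpha}|R_Z|_{2\alpha}$ and $|Y'|_\alpha|Z'|_\alpha$, in order to derive a priori estimates of the correct order. Balancing the parabolic H\"older norm in $(t,x)$ against the controlled rough path norm in $x$ for $u$, and matching these with the corresponding estimates for $\Psi^\eta$ obtained in Section \ref{sec:Prel-Calc}, is the core analytic difficulty of the argument.
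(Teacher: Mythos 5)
Your overall plan — an inner, pathwise fixed point for a prescribed noise coefficient $\eta$, followed by an outer fixed point in a space of adapted processes closing the constraint $\eta=\theta(u)$ — matches the ``two-level'' structure of the paper's Section~\ref{sec:Solutions}, and you correctly identify the key analytic subtlety that the a priori estimates must exploit the sharper form of \eqref{eq:IB1}--\eqref{eq:IB2} (via Lemma~\ref{lem:scaling} and Lemma~\ref{lem:comp-reg-func}) rather than the simplified quadratic bound \eqref{eq:IB3}. However, there are two places where the proposal departs from what is actually needed, one cosmetic and one a genuine gap.

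The cosmetic difference: you set up the inner contraction directly for $u$ in a ball of $C^{\alpha/2,\alpha}\cap L^2([0,T^*];\mathcal{C}^\alpha_X)$. The paper instead subtracts the known rough part and solves for the regular remainder $v=u-\Psi^\eta-U$ in $C^1_T=C([0,T^*];C^1)$ (Proposition~\ref{lem:ex-fp-g}). Since within the ball all competitors share the same derivative process $\eta$, differences $u-\bar u$ are automatically in $C^1_T$, so your formulation is equivalent up to an affine reparametrisation — but the paper's version makes explicit that the contraction really takes place in $C^1_T$ and that the map gains regularity via the heat semigroup, which is what you are implicitly relying on when you say the nonlinear term ``contributes only to the remainder.''

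The genuine gap concerns the outer fixed point. You write that the contraction is obtained ``on a space of adapted processes \ldots equipped with an $L^p(\Omega)$-type norm large enough to absorb the random factors $|(X,\XX)|_{\mathcal{D}^\alpha}$,'' and that $\Phi$ is a contraction for $T^*$ small enough. But the problem is not that random factors multiply the bound: the random factors enter the \emph{existence time} of the inner fixed point. Proposition~\ref{lem:ex-fp-g} produces a time $T^*=T^*(K,\|(X,\XX)\|_{\mathcal{D}^\alpha_T},\|\Psi^\eta\|_{\mathcal{C}^\alpha_{X,T}})$, and $\|\Psi^\eta\|_{\mathcal{C}^\alpha_{X,T}}$ is random and almost surely unbounded (its moments are finite, but it is not in $L^\infty(\Omega)$). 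Consequently, with no further device, the inner fixed point $u_\eta$ is defined only up to a random time, and one cannot even pose the outer fixed point on a deterministic interval $[0,T^*]$, let alone close the contraction by choosing $T^*$ small. The paper resolves this by introducing the stopping times \eqref{eq:def-st}--\eqref{eq:def-tau} and, crucially, the cutoff \eqref{eq:cutoffpsi} applied to $\|\Psi^{\theta(u)}\|_{\mathcal{C}^\alpha_X}$ before feeding it into the inner solver; this makes $T^*$ deterministic (depending only on the deterministic constants $K_1,K_2,K_3$), so Proposition~\ref{prop:loc-ex} can be proven. The cutoffs are then removed at the very end, using a Gronwall-type a priori estimate in which the detailed structure of \eqref{eq:sca1}/\eqref{eq:comp-reg-func1} (no products $|R_Y|_{2\alpha}|R_Z|_{2\alpha}$ or $|Y'|_\alpha|Z'|_\alpha$) guarantees that the quantity being bounded enters only linearly, together with a Markov-inequality argument showing that the probability of hitting the cutoff before time $T$ tends to zero as $K_3\to\infty$. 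Your phrase ``a priori moment bounds \ldots that prevent blow-up'' gestures at the right end-state, but without the cutoff machinery there is no fixed point to derive the a priori bounds from, so the argument as proposed does not close.
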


The construction will be performed in the next section using a Picard iteration. The statement remains valid, if one includes extra terms that are more regular, as for example a reaction term $f(u)$ for a bounded $C^1$ function $f$ with bounded derivatives.

%
%

It is important to remark that although the rough path $(X,\XX)$ does not explicitly appear in \eqref{eq:weak-sol} it plays a crucial role in determining the solution uniquely. Let us illustrate this in the one-dimensional case.  

We could define another rough path valued process $(X,\tilde{\XX})$ by setting 
\begin{equation}
\tilde{\XX}(t,x,y) \, = \, \XX(t,x,y) + (x-y).
\end{equation}
This corresponds to introducing an It\^o -- Stratonovich correction term in the iterated integral. Then it is straightforward to check that $(X,\tilde{\XX})$ also satisfies \eqref{eq:cond-it-int} and \eqref{eq:weak-sol} makes perfect sense if we view the space integral in the nonlinearity as a rough integral controlled by $(X,\tilde{\XX})$. Using the definition of the rough integral in Lemma \ref{lem:Gub-int} and recalling \eqref{eq:rp-dec-u} we get 
\begin{equs}
\widetilde{\int_0^1} &\, \varphi(x) \,  g\big( u(x) \big)\, \mathrm{d}_x u(x)  \notag \\
&\, = \, \lim \, \sum_{i} \varphi(x_i) \,  g\big( u(x_i) \big) \big( u(x_{i+1}) - u(x_{i}  )  \big) + \varphi(x_i) g'\big(u(x_i) \big)\,  \theta \big(u(x_i) \big)^2 \, \tilde{\XX}(x_i,x_{i+1}) \notag \\
&\, = \, \lim \sum_{i} \varphi(x_i) g\big( u(x_i) \big) \big( u(x_{i+1}) - u(x_{i}  )  \big) \notag\\
&\qquad \qquad  \qquad  + \varphi(x_i) g'\big(u(x_i) \big)  \theta \big(u(x_i) \big)^2 \Big(  {\XX}(x_i,x_{i+1}) + (x_{i+1} - x_i) \Big) \notag \\
& \, = \,  \int_0^1  \varphi(x)   g\big( u(x) \big)  \mathrm{d}_x u(x) + \int_0^1 \varphi(x) g'(u(x))   \, \theta \big(u(x) \big)^2\mathrm{d}x, \label{eq:extra-term}
\end{equs}   
where by $\widetilde{\int}$ we denote the rough integral with respect to $(X,\tilde{\XX})$. The limit is taken as the mesh of the partition $0 \leq x_1 \leq \cdots \leq x_N \leq 1$ goes to zero.

In particular, the reaction term $g'(u) \theta (u)^2 $ appears. In the additive noise case $\theta=1$ this is precisely the extra term found in the approximation resuls in  \cite{Ha10-2, HV10,HM10}. One can thus interpret these results, saying that the approximate solutions actually converge to solutions of the right equation with a different choice of reference rough path.

In the gradient case $g=DG$ our solution coincides with the classical solution defined using integration by parts. To see this, it is sufficient to see that the nonlinearity can be rewritten as 
\begin{equation*}
 \int_0^1 \!  \varphi(x) \,  g\big(u(s,x) \big) \,  \mathrm{d}_x u(s,x)  \,= \,  - \int \! \partial_x  \varphi(x) \, G\big(u(x) \big) \, \mathrm{d}x.
\end{equation*} 
For simplicity, let us argue component-wise and show this formula for a function $G$ that takes values in $\R$, i.e, we have $g = \nabla G$. We will assume that $G$ is of class $C^3$ with bounded derivatives up to order $3$.  To simplify the notation we leave out the time dependence and write $\varphi_i, u_i$ for $\varphi(x_i), u(x_i)$ etc.

Then we can write 
\begin{align}
\int_0^1 \! & \varphi(x) \,  g\big(u(x) \big) \,  \mathrm{d}_x u(x)  \notag \\
&  =\, \lim \sum_i  \varphi_i \, \Big[   \nabla G (u_i  ) \cdot  \delta u_{i,i+1}  +  \Big(    \partial^{\alpha, \beta} G (u_i ) \,\big( u_i' \big)^{\alpha \alpha'}  \big( u_i' \big)^{\beta \beta'}  \, \XX_{i,i+1}^{\alpha' \beta'}   \Big)   \Big] \notag\\   
& = \,  \lim \sum_i  \varphi_i \, \Big[  G(u_{i+1}) - G(u_i) +Q_i   \Big]. \label{eq:grad-int1}   
\end{align} 
The sum involving the differences of $G$ converges to  $- \int \partial_x \varphi \,  G \, \mathrm{d}x$, so that it only remains to bound $Q_i$. Here the crucial ingredient is the fact that the Gaussian rough paths constructed by Friz and Victoir are \emph{geometric} (see Remark \ref{rem:gRP}). In particular we have for all $x,y$:
\begin{equation}\label{eq:geom-rp2}
\Sym \big( \XX(x,y) \big) \,= \,   \delta X(x,y) \otimes \delta X(x,y).  
\end{equation} 
As $D^2 G$ is a symmetric matrix the antisymmetric part of $\XX$ does not contribute towards the  second term  in the second line of \eqref{eq:grad-int1} and we can rewrite with Taylor formula
\begin{align}
| Q_i | \, \leq \,& \big| D^3 G \big|_0 | \delta u_{i,i+1}|^3 + \big| D^2 G \big|_0 |R_u(x_i,x_{i+1}) |^2 \notag\\
 \leq \,& \ |G|_{C^3} \Big(  |u|_{C^\alpha}^3 \big(x_{i+1} -x_i \big)^{3\alpha} + |R_u|_{2 \alpha}^2 \big(x_{i+1} -x_i \big)^{4 \alpha} \Big). 
\end{align}
This shows that the sum involving $Q_i$ on the right hand side of \eqref{eq:grad-int1} vanishes in the limit. 

This discussion shows in particular that in the gradient case $g=DG$ the concept of weak solution is independent of the reference rough path if we impose the additional assumption that it is geometric i.e. that it satisfies \eqref{eq:geom-rp2}. The extra reaction term we obtained above in \eqref{eq:extra-term} is due to the fact that the modified rough path $(X,\tilde{\XX})$ is not geometric. 

In the non-gradient case extra terms may appear even for geometric rough paths. To see this, one can for example consider the two-dimensional example  
\begin{equation}
\widetilde{\XX}(t,x,y) \,=\, \XX(t,x,y) + (y-x) \left( \begin{array}{cc}
0 & 1  \\
-1 & 0 
\end{array} 
\right)
\end{equation}
and 
\begin{equation}
g(u_1,u_2) \,= \, \left( \begin{array}{cc}
u_2 & -u_1  \\
u_2 & -u_1 
\end{array} 
\right).
\end{equation}
Then a similar calculation to \eqref{eq:extra-term} shows that an extra reaction term  
\begin{equation}
 \tr \bigg[ \left(\begin{array}{cc}
0 & 1  \\
-1 & 0 
\end{array}
\right)
  \theta(u)
  \left(\begin{array}{cc}
0 & 1  \\
-1 & 0 
\end{array}
\right)
   \theta(u)^T \bigg]
\end{equation}
 appears in each coordinate.

We justify our concept of solution by showing that it is stable under smooth approximations. To this end let  $\eta_\eps(x)= \eps^{-1}\eta(x/\eps)$ be standard mollifiers as introduced in \eqref{eq:mollifier}. For a fixed initial data $u_0\in C^\beta$ denote by $u_\eps$ the unique solution to the smoothened equation 
\begin{equation}
\mathrm{d}u_\eps \, =  \, \big[ \Delta u_\eps + g(u_\eps) \partial_x u_\eps \big] \d t + \theta(u_\eps)\, \mathrm{d} \big( W \ast \eta_\eps  \big).  \label{eq:Stoch-Burg-smooth}
\end{equation}
 Note that due to the spatial smoothening of the noise there is no ambiguity whatsoever in the interpretation of \eqref{eq:Stoch-Burg-smooth} . We then get

\begin{theorem}\label{thm:Stability}
Fix $\frac{1}{4}< \alpha < \beta < \frac{1}{2}$. Fix an initial data $u_0 \in C^\beta$ and assume that $g$ and $\theta$ are as in Theorem \ref{thm:MR}. Then we have for any $\delta > 0$ and every $T>0$
\begin{equation}
\Prob \Big[ \| u - u_\eps  \|_{C^{\alpha/2,\alpha} [0,T] \times [0,1]  } \geq \delta \Big] \, \to 0 \qquad \text{as } \eps \downarrow 0.  
\end{equation}
\end{theorem}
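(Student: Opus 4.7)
The plan is to leverage the continuity properties of rough integrals (Lemmas \ref{lem:Gub-int-stab} and \ref{lem:gau-rp}) together with the fixed point argument used to construct $u$, applied uniformly in $\eps$.

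First I would construct the mollified reference rough path $(X_\eps, \XX_\eps)$, namely the rough path lift of the solution to the linear stochastic heat equation driven by $W \ast \eta_\eps$. Since $W \ast \eta_\eps$ is jointly Gaussian with $W$ and the covariance of $X - X_\eps$ has a small spatial scale of order $\eps$, I expect a $\rho$-variation bound of the form
\[
|K^{X(t) - X_\eps(t)}|_{\rho\text{-var}}\bigl([x,y]^2\bigr) \leq C\,\eps^{2\vartheta}\,|y-x|^{1/\rho}
\]
for some $\vartheta>0$ (with the right power of $\eps$ coming from the standard high-frequency truncation estimate for the heat kernel smoothed against $\eta_\eps$). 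Together with \eqref{eq:2-dvar3} of Lemma \ref{lem:gau-rp} this yields, for all $p\geq 1$,
\[
\sup_{t\in [0,T]} \Ex\bigl[|X(t)-X_\eps(t)|_{C^\alpha}^p + |\XX(t)-\XX_\eps(t)|_{2\alpha}^p \bigr]^{1/p} \leq C\,\eps^\vartheta,
\]
and a standard Kolmogorov-in-time argument upgrades this to convergence uniformly in $t$.

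Second, I would reinterpret $u_\eps$ within the rough-path framework. Although $u_\eps$ is classically well defined (since $W\ast\eta_\eps$ is spatially smooth), I would show that for each $t$ the function $u_\eps(t,\cdot)$ is controlled by $X_\eps(t,\cdot)$ with derivative $\theta(u_\eps)$ in the sense of \eqref{eq:rp-dec-u}, and that the rough integral in the mild formulation \eqref{eq:mild-sol} reduces to the classical one. Then I would rerun the two-level Picard scheme from Section \ref{sec:Solutions} with $(X_\eps,\XX_\eps)$ as the driver, obtaining bounds on $|u_\eps|_{\mathcal{C}^\alpha_{X_\eps(t)}}$ and $\|u_\eps\|_{C^{\alpha/2,\alpha}}$ that are \emph{uniform} in $\eps$ on a (possibly random) short time interval. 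This uniformity is crucial: Lemma \ref{lem:Gub-int-stab} tolerates the difference of rough paths only when both controlled-rough-path norms stay bounded.

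Third, with uniform bounds and convergence of the reference rough paths in hand, I would apply Lemma \ref{lem:Gub-int-stab} to the nonlinearity and a straightforward stability estimate to the stochastic convolution, closing a contraction-style bound
\[
\|u-u_\eps\|_{\CT} \leq C_T\bigl( \eps^\vartheta + \|u-u_\eps\|_{\CT}^{1/2}\|u-u_\eps\|_{\CT}^{1/2}\bigr)
\]
on a short interval, and iterate across $[0,T]$ using the restart property for the reference rough path recorded after Definition \ref{def:weak-sol}. Convergence in probability in $C^{\alpha/2,\alpha}$ then follows by taking $\eps\to 0$ on the event where the uniform bounds hold, whose probability tends to one.

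The main obstacle is the uniform-in-$\eps$ a priori bound on $u_\eps$ in the controlled-rough-path norm. Standard SPDE estimates do not suffice because the ``naive'' rewriting of the drift involves terms that diverge as $\eps\to 0$; the rough-path norm is precisely what remains tame in the limit, and keeping track of it requires redoing the Section \ref{sec:Solutions} fixed point estimates with constants that depend only on $|(X_\eps,\XX_\eps)|_{\Da}$, which is itself under control thanks to step one.
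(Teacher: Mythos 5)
Your proposal follows the paper's approach in its broad strokes: reinterpret $u_\eps$ as an $X_\eps$-controlled rough path with derivative $\theta(u_\eps)$, control the reference rough paths uniformly via the Gaussian estimates, rerun the two-level fixed point with constants depending only on rough-path norms, and iterate across $[0,T]$. The paper does exactly this — the reinterpretation is \eqref{eq:eps-dec}, the uniform remainder bounds come from Proposition \ref{prop:regrem}, and the stopping times $\tau_{K_1,K_2,K_3,K_4,\eps}$ localise the argument — so the outline is right.

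There is, however, a genuine gap in your iteration step. To restart after a short time $T$ you must compare $u(T)$ and $u_\eps(T)$ in $C^\beta$ (the Picard argument requires $C^\beta$ initial data with $\beta>\alpha$), but your short-time contraction only controls $\|u-u_\eps\|_{C^{\alpha/2,\alpha}}$, hence at best a $C^\alpha$ bound on the difference at time $T$. There is no interpolation route to $C^\beta$, since both $u$ and $u_\eps$ are only $C^{1/2-}$. The paper gets around this via the controlled-rough-path decomposition: because
\begin{equation*}
\delta u(T,x,y) = \theta(u(T,x))\,\delta X(T,x,y) + R^{\theta(u)}(T,x,y),
\end{equation*}
and similarly for $u_\eps$, one obtains
\begin{equation*}
|u(T)-u_\eps(T)|_\beta \le |\theta|_0\,|X(T)-X_\eps(T)|_\beta + |\theta|_{C^1}|u(T)-u_\eps(T)|_0 K + \big|R^{\theta(u)} - R^{\theta(u_\eps)}_\eps\big|_{\Omega C^\beta},
\end{equation*}
and every term on the right has already been bounded: the Gaussian convergence holds for any exponent below $1/2$ (hence for $\beta$), and the remainders live in $\Omega C^{2\alpha}\subset\Omega C^\beta$ since $\beta<2\alpha$ (this is where the hypothesis $\alpha>1/4$ earns its keep). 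Without some mechanism of this kind your ``iterate across $[0,T]$'' does not close. A secondary under-specification: the ``straightforward stability estimate'' for the stochastic convolution conceals the non-Gaussian difficulty — $\Psi^{\theta(u_\eps)}_\eps$ is not Gaussian, so Lemma \ref{lem:gau-rp} gives nothing about its controlled-rough-path remainder; the needed uniform-in-$\eps$ moment bounds on $R^{\theta(u_\eps)}_\eps$ are exactly Propositions \ref{prop:Stoch-Conv-Rough-Path} and \ref{prop:regrem}, which rely on the parabolic regularity norm $\$\theta(u_\eps)\$_{p,\alpha}$ and on the stopping times to cut it off.
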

Recall that the parabolic H\"older norm on $ \| u \|_{C^{\alpha/2,\alpha} [0,T] \times [0,1] }$ was defined in \eqref{eq:par-hol}.

\section{Stochastic Convolutions}
\label{sec:Prel-Calc}

This section deals with properties of stochastic convolutions. Fix a probability space $(\Omega, \F, \Prob)$ with a right continuous, complete filtration $(\F_t)$. Let $W(t) = (W^1(t), \ldots, W^n(t))$, $t \geq 0$ be a cylindrical Wiener process on $L^2\big([0,1], \R^n \big)$. Then for any adapted $ L^2\big( [0, 1], \R^{n \times n } \big)$ valued process $\theta$ define 
\begin{gather}
\Psi^\theta (t)\, = \, \int_0^t S(t-s)\, \theta(s) \, \mathrm{d}W(s). \label{eq:Stoch-Conv}
\end{gather}
Recall that $S(t)=e^{t \Delta}$ denotes the semigroup associated to the Laplacian with periodic boundary conditions, which acts independently on all of the components. The Gaussian case $\theta=1$ will play a special role and we will denote it with $X= \Psi^1$. 

We will also treat the stochastic convolution for a smoothened version of the noise. To this end let $\eta:\R \to \R$ be a smooth,  non-negative, symmetric function with compact support and with $\int \eta(x) \, \d x =1$. Furthermore, for $\eps >0$ 
\begin{equation}\label{eq:mollifier}
\eta_\eps(x)= \eps^{-1}\eta\left( \frac{x}{\eps} \right).
\end{equation}
Then denote by 
\begin{gather}
\Psi_\eps^\theta (t)\, = \, \int_0^t S(t-s)\, \theta(s) \, \mathrm{d}\big( W \ast \eta_\eps \big)(s), \label{eq:Stoch-Conv-smooth}
\end{gather}
the solution to the stochastic heat equation with spatially smoothened noise. Note that the convolution in \eqref{eq:Stoch-Conv-smooth} only acts on the spatial argument $x$. In particular, the temporal integral still hast to be interpreted as an It\^o integral. As above in the special case $\theta =1$ we will write $X_\eps = \Psi^1_\eps$.

The main purpose of this section is to establish the following facts: In the Gaussian case $\theta=1$ for every $t$ the stochastic convolution $X(t, \cdot) = \Psi^1$ satisfies the criteria of Lemma \ref{lem:gau-rp} and therefore can be lifted to a rough path $(X(t, \cdot) ,\XX(t,\cdot) )$ in space direction. Furthermore, we will be able to conclude that $(X(t, \cdot) ,\XX(t,\cdot) )$ is a continuous process taking values in $\Da$.  

If  $\theta \neq 1$ is not deterministic, the process $\Psi^\theta$ is in general not Gaussian and Lemma \ref{lem:gau-rp} does not apply. But we will establish that as soon as $\theta$ has the right space-time regularity  the stochastic convolution $\Psi^\theta(t, \cdot)$ is controlled by $X(t, \cdot)$ for every $t$. The essential ingredients are the estimates in Lemma \ref{lem:IR}. Finally, we will show that all of these results are stable under approximation with the smoothened version defined in \eqref{eq:Stoch-Conv-smooth}.

Recall that for $f\in L^2[0,1]$ we can write
\begin{equation}
S(t)f (x) \,=\, \int_0^1 \hat{p}_t(x-y) f(y)  \, \mathrm{d} y,
\end{equation}
and that one has an explicit expression for the heat kernel $\hat {p}_t$ either in terms of the Fourier decomposition
\begin{equation}\label{eq:Fou}
\hat{p}_t(x) \,= \, \sum_{k \in \Z} \, \exp \big(  - (2 \pi k)^2 t \big) \,\exp \big(i (2 \pi k) (x)   \big),
\end{equation}  
or from a reflection principle
\begin{equation}\label{eq:RePr}
\hat{p}_t(x) \,= \, \sum_{k \in \Z} p_t(x-k),
\end{equation}
where $p_t(x) = \frac{1}{(4 \pi t)^{1/2}} \exp \big( -x^2/4 t  \big)$ denotes the usual Gaussian heat kernel.

%

%
The following two lemmas about some integrals involving $\hat{p}_t$ are the central part of the proof of  regularity for the stochastic convolutions. We first recall the following bounds:
\begin{lemma}\label{lem:CR}
The following bounds hold:
\begin{enumerate}
\item \emph{Spatial regularity:} For $x,y \in [0,1]$  and $t \in [0,T]$ 
\begin{equation}\label{eq:SR1}
\int_0^t \! \! \int _0 ^1 \Big( \hat{p}_{t-s}(x-z) - \hat{p}_{t-s}(y-z)  \Big)^2   \, \mathrm{d}s \,\mathrm{d}z \, \leq \, C \big| x-y  \big|.
\end{equation}
\item \emph{Temporal regularity:} For $x \in [0,1]$ and $0 \leq s < t \leq T$ 
\begin{align}
\int_0^s  \!  \!\int _0 ^1 \Big( \hat{p}_{t-\tau}(x-z) &- \hat{p}_{s-\tau}(x-z)  \Big)^2 \, \mathrm{d}\tau\, \mathrm{d}z \notag\\
&+ \int_s^t  \! \!\int _0 ^1 \Big( \hat{p}_{t-\tau}(x-z)\Big)^2 \, \mathrm{d}\tau \, \mathrm{d}z  \, \leq \, C (t-s)^ {1/2}. \label{eq:TR}
\end{align}
\end{enumerate}
\end{lemma}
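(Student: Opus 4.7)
The plan is to reduce both estimates to convergent Fourier series by applying Parseval's identity in the spatial integral. For part~(1), the Fourier expansion~\eqref{eq:Fou} gives
\[
\int_0^1 \bigl(\hat p_{t-s}(x-z) - \hat p_{t-s}(y-z)\bigr)^2 \, \d z \,=\, \sum_{k\in\Z} e^{-2(2\pi k)^2 (t-s)}\, \bigl|e^{i2\pi k x}-e^{i2\pi k y}\bigr|^2,
\]
and integrating in $s$ over $[0,t]$ replaces the exponential factor by a quantity of order $(1+k^2)^{-1}$. I am then left with a series of the form $\sum_k \min(4,(2\pi k)^2|x-y|^2)/(1+k^2)$, which is bounded by a constant times $|x-y|$ by splitting the sum at $|k|\sim 1/|x-y|$: on the low-frequency side I use $|e^{i2\pi k x}-e^{i2\pi k y}|^2\leq (2\pi k)^2 |x-y|^2$, and on the high-frequency side the uniform bound $4$, both of which sum to $C|x-y|$.

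Part~(2) proceeds in the same spirit. Parseval turns the first term into
\[
\sum_k \bigl(1-e^{-(2\pi k)^2(t-s)}\bigr)^2 \cdot \frac{1-e^{-2(2\pi k)^2 s}}{2(2\pi k)^2},
\]
and the second term into $\sum_k (1-e^{-2(2\pi k)^2(t-s)})/(2(2\pi k)^2)$. I split each series at the critical frequency $|k|\sim 1/\sqrt{t-s}$; on the low side I use the elementary bound $1-e^{-a}\leq a$, which after summation yields a contribution of order $\sqrt{t-s}$, and on the high side I use $\sum_{|k|\geq N} k^{-2}\leq C/N$ with $N\sim 1/\sqrt{t-s}$, which gives a matching $\sqrt{t-s}$ contribution. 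Both pieces therefore come out of order $(t-s)^{1/2}$, which is consistent with the $L^2$-version of the $\tfrac{1}{4}$-H\"older regularity of $X$ in time.

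There is no genuine obstacle here: these are the standard short-time regularity estimates for the heat semigroup on the torus, and the only real care is in bookkeeping the dyadic blocks so the exponents match. As an alternative route one could use the reflection principle~\eqref{eq:RePr} to pass to the full-line Gaussian kernel $p_t$, where the same computations are carried out with the Fourier transform on $\R$; the periodic corrections $\sum_{k\neq 0} p_t(\cdot - k)$ are smooth and uniformly bounded for $t\in[0,T]$, hence their contribution to all integrals is $O(1)$ in the space variable and Lipschitz in time, which is negligible on the scales $|x-y|$ and $(t-s)^{1/2}$ that appear on the right-hand side.
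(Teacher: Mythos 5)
Your proposal is correct and takes exactly the route the paper indicates: the paper's proof simply remarks that the bounds follow from the Fourier expansion \eqref{eq:Fou} together with the inequalities $|\sin(x)|\le 1\wedge|x|$ and $1-e^{-x}\le 1\wedge x$, which is precisely what your Parseval computation and frequency splitting at $|k|\sim|x-y|^{-1}$ (resp.~$|k|\sim(t-s)^{-1/2}$) make explicit. You have filled in the details accurately, including the harmless $k=0$ term and the negligible smooth corrections in the reflection-principle variant.
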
 
\begin{proof}
These bounds are essentially well-known and can be derived easily from the expression \eqref{eq:Fou}
by making use of the inequalities $|\sin(x)| \le 1\wedge |x|$ and $1-e^{-x} \le 1\wedge x$ for $x \ge 0$.
\end{proof}
%
%
%
%
%
The following lemma is the essential step to prove that $\Psi^\theta$ is controlled by $X$: 
\begin{lemma}\label{lem:IR}
The following bounds hold:
\begin{enumerate}
\item \emph{Regularisation by a time dependent function: }For all $x,y \in [0,1]$ and every $t \in[0,T]$
\begin{gather}
\int_0^t  \! \! \int_0^1  \Big(\hat{p}_{t-s}(y-z) - \hat{p}_{t-s}(x-z)  \Big)^2 |s-t|^{\alpha}  \, \d z \,\d s \, \leq \, C |x-y|^{1+2 \alpha}.\label{eq:bou-I1}
\end{gather}
\item \emph{Regularisation by a space dependent function: }For all $x,y \in [0,1]$ and every $t \in [0,T]$
\begin{gather}
\int_0^t \!  \! \int_0^1 \Big(\hat{p}_{t-s}(y-z) - \hat{p}_{t-s}(x-z)  \Big)^2  |x-z |^{2\alpha}  \, \d z \, \d s\,  \leq \, C |x-y|^{1+2 \alpha}. \label{eq:bou-I2}
\end{gather}
\end{enumerate}
\end{lemma}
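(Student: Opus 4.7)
Both bounds \eqref{eq:bou-I1} and \eqref{eq:bou-I2} have the correct parabolic scaling to hold, and I would prove them simultaneously by a single dimensional argument. The heat kernel satisfies $p_{h^2 r}(h u) = h^{-1} p_r(u)$ under the rescaling by $h = |x-y|$, so matching powers of $h$ already forces each right-hand side to be of the form $C h^{1+2\alpha}$; the only real work is to check that the resulting dimensionless integrals are finite.

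First I would dispose of the torus by invoking the reflection formula \eqref{eq:RePr}. Writing $\hat p_\tau(\xi) = p_\tau(\xi) + \sum_{k \neq 0} p_\tau(\xi - k)$, the tail sum is smooth and uniformly exponentially small in $|k|$ for $\tau \leq T$ and $\xi \in [-1,1]$, so its contribution to either left-hand side is readily bounded by $C|x-y|^2$, which is dominated by $|x-y|^{1+2\alpha}$ since $\alpha < 1/2$. Everything therefore reduces to the analogous estimates on $\R$ with the Gaussian kernel $p_\tau$ alone.

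Next I would substitute $w = (z-x)/h$ and $r = (t-s)/h^2$. Using $p_{h^2 r}(h u) = h^{-1} p_r(u)$, a direct calculation turns the bounds into
\[
\text{LHS of \eqref{eq:bou-I1}} \,\leq\, C\, h^{1+2\alpha} \int_0^\infty\!\!\int_\R r^{\alpha} \bigl(p_r(w-1) - p_r(w)\bigr)^2 \, dw \, dr,
\]
\[
\text{LHS of \eqref{eq:bou-I2}} \,\leq\, C\, h^{1+2\alpha} \int_0^\infty\!\!\int_\R \bigl(p_r(w-1) - p_r(w)\bigr)^2 \, |w|^{2\alpha} \, dw \, dr,
\]
after extending the upper $r$-limit to $+\infty$. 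Each dimensionless integral I would then estimate by splitting at $r = 1$. For $r \leq 1$ the Gaussians $p_r(w-1)$ and $p_r(w)$ have essentially disjoint supports of width $\sqrt r$, so $\int_\R (p_r(w-1) - p_r(w))^2 \, dw$ is of order $r^{-1/2}$, and both $r$-integrands are integrable at zero. For $r > 1$ a first-order Taylor expansion $p_r(w-1) - p_r(w) \approx -\partial_w p_r(w)$ combined with the Gaussian bound $|\partial_w p_r(w)| \lesssim (|w|/r)\, p_r(w)$ yields an inner integral of order $r^{-3/2}$ for the first bound and of order $r^{\alpha - 3/2}$ for the second, so that in both cases the full $r$-integrand is $O(r^{\alpha - 3/2})$, which is integrable at infinity precisely because $\alpha < 1/2$.

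I do not expect any genuine technical obstacle beyond this exponent bookkeeping; in particular, no cancellation effects need to be exploited. The constraint $\alpha < 1/2$ is sharp and reflects the known failure of $\tfrac12$-Hölder regularity for the stochastic heat equation \eqref{eq:SHE}.
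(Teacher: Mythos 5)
Your proof is correct, and it takes a genuinely different route from the paper's. You exploit the parabolic scaling $p_{h^2 r}(hu) = h^{-1} p_r(u)$ to reduce both bounds simultaneously to the finiteness of a single dimensionless integral in $(r,w)$, then verify finiteness by exponent counting at $r\to 0$ and $r\to\infty$. The paper instead handles the two parts with separate, more computational arguments: for part (i) it evaluates the inner spatial integral exactly — $\int_\R (p_{t-s}(y-z)-p_{t-s}(z))^2\,\mathrm{d}z = \tfrac{1}{\sqrt{2\pi}}(t-s)^{-1/2}\bigl[1-e^{-y^2/(8(t-s))}\bigr]$ — and substitutes $\tau = y^2/(8(t-s))$; for part (ii) it decomposes the time integral at $s = t-|y|^2$, using a crude $L^2$ bound on the near-singular region and a mean-value estimate on the rest. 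Your unified scaling argument is arguably cleaner, since it makes the exponent $1+2\alpha$ visible before any integration, and the two finiteness checks are then parallel; the paper's version is more explicit and self-contained in the estimates. You have both used the reflection formula to reduce to the Gaussian kernel on $\R$, discarding the smooth correction $p^R_t$, which contributes $O(|x-y|^2)$.

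One small point to tighten: in the $r>1$ regime you invoke $|p_r(w-1)-p_r(w)| \approx |\partial_w p_r(w)| \lesssim (|w|/r)p_r(w)$. What the mean value theorem actually gives is $\sup_{\xi\in[w-1,w]}|\partial_\xi p_r(\xi)|$, and since $\partial_\xi p_r$ is not monotone one needs to argue that this supremum is comparable to $(|w|+1)/r$ times a Gaussian centred at $w$ but with a slightly larger variance. The paper does this carefully via the inequality $\xi^2 \ge \tfrac12 w^2 - 1$ for $\xi\in[w-1,w]$, giving $p_r(\xi)\le e^{1/(4r)} e^{-w^2/(8r)}/\sqrt{4\pi r}$. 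This does not change any exponents and your conclusion stands, but it is a step you should make explicit rather than absorbing into ``$\approx$''.
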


%
%

\begin{proof}
Due the periodicity of $\hat{p_t}$ we can assume that $x=0$ and replace the spatial integral by an integral over $[-\frac{1}{2}, \frac{1}{2}]$. Furthermore, without loss of generality we can assume that $0 \leq y \leq \frac{1}{4}$. Due to the explicit formula \eqref{eq:RePr} one can see that for  $z \in [-\frac{3}{4},\frac{3}{4}]$ one can write  $\hat{p}_t(z)=p_t(z) +p_t^R(z)$ where the remainder $p_t^R$ is a smooth function with uniformly bounded derivatives of all orders. Thus, using
\begin{align*}
 \Big(\hat{p}_{t-s}(z-y) -& \hat{p}_{t-s}(z)  \Big)^2 \\
 & \leq \, 2 \Big(p_{t-s}(z-y) - p_{t-s}(z)  \Big)^2 + 2 \Big(p^R_{t-s}(z-y) - p^R_{t-s}(z)  \Big)^2 ,
\end{align*} 
and that 
\begin{gather}
\int_0^t  \!\!  \int_0^1  \Big(p^R_{t-s}(z-y) - p^R_{t-s}(z)  \Big)^2 |s-t|^{\alpha}  \, \d z \, \d s \leq C | p^R |_{C^1}  |y|^2,
\end{gather}
as well as the analogous bound involving he term $|x-y|^{2 \alpha}$, one sees that it is sufficient to prove \eqref{eq:bou-I1} and \eqref{eq:bou-I2} with $\hat{p_t}$ replaced by the Gaussian heat kernel $p_t$.

%
%
%

(i) Let us treat the integral involving $|s-t|^{\alpha}$ first.  To simplify notation we denote by $I_1=\int_0^t  \int_{-\frac{1}{2}}^{\frac{1}{2}}  \Big(\hat{p}_{t-s}(z-y) - \hat{p}_{t-s}(z)  \Big)^2 |s-t|^{\alpha}  \,\d z \, \d s$. A direct calculation shows

\begin{align}
\int_\R \Big(p_{t-s}(y-z) - p_{t-s}(z) \Big)^2 \d z \, = \, \frac{1}{\sqrt{2 \pi }} (t-s)^{-1/2} \bigg[1 - \exp\bigg(-\frac{y^2}{8(t-s)} \bigg) \bigg]. \label{eq:rp6}
\end{align}

 So one gets:

\begin{align}
I_1 \, \leq \,& \frac{1}{\sqrt{2 \pi }} \int_0^t (t-s)^{ \alpha - 1/2} \bigg[1 - \exp\bigg(-\frac{y^2}{8(t-s)} \bigg) \bigg] \d s \notag\\
   = \,& \frac{1}{\sqrt{2 \pi }} 8^{-1/2-\alpha} |y|^{1+2 \alpha} \int_{\frac{y^2}{8t}}^\infty \tau^{-3/2 - \alpha} \big[1- e^{-\tau} \big] \mathrm{d} \tau. \label{eq:rp7}
\end{align}

Here in the last step we have performed the change of variable $\tau=\frac{y^2}{8(t-s)}$. Note that the integral in the last line of  \eqref{eq:rp7} converges at $0$ due to $\alpha < \frac{1}{2}$. Thus we can conclude 
\begin{align}
 I_1 \, \leq \frac{1}{\sqrt{2 \pi }} 8^{-1/2-\alpha} |y|^{1+2 \alpha} \int_{0}^\infty \tau^{-3/2 - \alpha} \big[1- e^{-\tau} \big]\, \d \tau \leq C |y|^{1+2 \alpha}. \label{eq:rp8}
\end{align}
%
%
%
%
(ii) Let us now prove the bound  \eqref {eq:bou-I2}.  We write   
\[
I_2 \, =  \, \int_0^t \int_0^1 \Big(p_{t-s}(z-y) - p_{t-s}(z)  \Big)^2  |z |^{2\alpha}  \, \d z \, \mathrm\d s. 
\] 
We decompose the integral into
\begin{align}
I_2 \, \leq \,& \int_{(t-|y|^2)\wedge 0}^t \int_\R \Big(p_{t-s}(z-y) - p_{t-s}(z)  \Big)^2  |z |^{2\alpha}  \, \d z \, \d s \label{eq:rp9} \\
&+\int_0^{(t-|y|^2)\wedge 0} \int_\R \Big(p_{t-s}(z-y) - p_{t-s}(z)  \Big)^2  |z |^{2\alpha}  \, \d z \, \d s \notag \\
= \, & I_{2,1} + I_{2,2}. \notag
\end{align}
For the first term we get
\begin{align}
I_{2,1} \, \leq \, & 2 \int_{(t-|y|^2)\wedge 0}^t \int_\R  \Big[ \big(p_{t-s}(z-y) \big)^2 + \big(p_{t-s}(z)  \big)^2 \Big]  |z |^{2\alpha}  \, \d z  \, \d s \notag \\
\leq \,& C \int_{(t-|-y|^2)\wedge 0}^t \int_\R (t-s)^{-1} \exp \bigg(- \frac{z^2}{2(t-s)} \bigg) \Big(|z |^{2\alpha} + |y |^{2\alpha}  \Big) \, \d z \, \d s\notag\\
\leq \,&  C \int_{(t-|y|^2)\wedge 0}^t (t-s)^{-1+1/2 + \alpha}\, \d s  + C \int_{(t-|y|^2)\wedge 0}^t (t-s)^{-1+1/2} |y|^{2 \alpha} \, \d s \notag\\
\leq \,& C |y|^{1+2 \alpha}.   \label{eq:rp10}   
\end{align}
In order to get an estimate on the second term we write
\begin{align}
\Big( p_{t-s}(z-y) - p_{t-s}(z)  \Big)^2 \,\leq\,   \Big( | y | \sup_{\xi \in [z-y,z ] } \big| p_{t-s}'(\xi) \big| \Big)^2.   
\end{align}
We have for $t \geq y^2$
\begin{align}
\sup_{\xi \in [z-y,z ] } \big| p_t'(\xi) \big| \,& \leq \, \sup_{\xi \in [z-y,z ] } \frac{|\xi | }{2 t \sqrt{4 \pi t }} \exp \Big( - \frac{\xi^2}{4t}  \Big) \notag\\
& \leq \,  \frac{|z | + |y| }{2 t \sqrt{4 \pi t }} \exp \Big( - \frac{z^2}{8t}  \Big) e^{1/4},
\end{align}
where we have used the the identity $(a+b)^2 \geq \frac{1}{2}a^2-b^2$ as well as $t \geq y^2$. 
Thus we can write
\begin{align}
I_{2,2} \,& \leq \,  C |y|^2  \int_0^{(t-y^2)\wedge 0} \int_\R  \frac{z^2 + y^2 }{(t-s)^3 } \exp \Big( - \frac{z^2}{4(t-s)}  \Big)   |z|^{2\alpha}  \, \d z \, \d s \notag \\
& \leq \,   C y^2  \int_0^{(t-y^2)\wedge 0}  \frac{(t-s) + y^2 }{(t-s)^3 } (t-s)^{1/2 + \alpha} \d s   \notag \\
& \leq \,   C |y|^{1+2 \alpha} .
\end{align}
This completes the proof.  
\end{proof}
%
%
%
%
%
It is a well known fact and an immediate consequence of Lemma \ref{lem:CR} together with the Kolmogorov criterion that (say for bounded $\theta$) for any $\alpha<\frac{1}{2}$ the stochastic convolution $\Psi^\theta$ is almost surely $\alpha$-H\"older continuous as a function of the space variable $x$ for a fixed value of the time variable $t$ and almost surely $\alpha/2$-H\"older continuous as function of the time variable $t$ for a fixed value of $x$. The following proposition makes this statement more precise:
\begin{proposition}\label{prop:Reg-Stoch-Conv}
For $p \geq 2$ denote by 
\begin{equation}
\$  \theta \$_{p,0} \, = \, \Ex \big[ \sup_{(t,x)} | \theta(t,x)|^p   \big]^{1/p}.
\end{equation}
For every $\alpha < \frac{1}{2}$  and every $p >      \frac{12}{1-2 \alpha} $ 
set $\vartheta = \frac{1- 2 \alpha}{4} -\frac{3}{p}$. Then we have
\begin{equation}
\Ex \Big[   \big\| \Psi^{\theta} \big\|_{C^{\vartheta}( [0,T]; C^{ \alpha} )  }^p        \Big] \, \leq \, C \$ \theta \$_{p,0}^p. \label{eq:Hol-Bou-Psi1}
\end{equation}
\end{proposition}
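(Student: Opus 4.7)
The plan is a two-parameter Kolmogorov-type argument, combining Lemma~\ref{lem:GRR} applied in the spatial variable with a Kolmogorov continuity estimate in the temporal variable. The quantitative core will consist of \BDG moment bounds for the mixed stochastic increments of $\Psi^\theta$, coupled with the deterministic kernel estimates of Lemma~\ref{lem:CR}.

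First I would write the mixed increment
\[
\delta^2 \Psi^\theta(t,s,x,y) \,:=\, \bigl(\Psi^\theta(t,x) - \Psi^\theta(t,y)\bigr) - \bigl(\Psi^\theta(s,x) - \Psi^\theta(s,y)\bigr)
\]
as a single It\^o integral against $W$ with deterministic integrand $K_{t,s,x,y}(\tau,z)\,\theta(\tau,z)$, splitting the time integration into $[0,s]$ and $[s,t]$. Bounding the $L^2$-norm of the kernel $K_{t,s,x,y}$ in two different ways via Lemma~\ref{lem:CR} (using the spatial estimate \eqref{eq:SR1} pointwise in time, or the temporal estimate \eqref{eq:TR} pointwise in space) and interpolating yields
\[
\|K_{t,s,x,y}\|_{L^2([0,t]\times[0,1])}^2 \,\leq\, C\,|x-y|^{1-\eta}\,|t-s|^{\eta/2}
\]
for any $\eta \in [0,1]$. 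The \BDG inequality, combined with the fact that $K_{t,s,x,y}$ is deterministic so that $\sup_{\tau,z}|\theta(\tau,z)|$ can be factored out of the expectation, then gives
\[
\Ex\bigl[|\delta^2 \Psi^\theta|^p\bigr] \,\leq\, C_p\,\$\theta\$_{p,0}^p\,|x-y|^{p(1-\eta)/2}\,|t-s|^{p\eta/4}.
\]

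Next I would plug this moment bound into Lemma~\ref{lem:GRR} in the spatial variable, applied with $\Theta(u) = u^p$ and $p(r) = r^{\alpha + 2/p}$, producing
\[
|\Psi^\theta(t) - \Psi^\theta(s)|_{C^\alpha}^p \,\leq\, C \int_{[0,1]^2} \frac{|\delta^2 \Psi^\theta(t,s,x,y)|^p}{|x-y|^{\alpha p + 2}}\, \d x\, \d y.
\]
Taking expectations and using Fubini, the spatial integral converges as soon as $\eta$ is chosen slightly smaller than $1 - 2\alpha - 4/p$, a quantity which is strictly positive thanks to the assumption $p > 12/(1-2\alpha)$. This yields the pointwise-in-time estimate
\[
\Ex\bigl[|\Psi^\theta(t) - \Psi^\theta(s)|_{C^\alpha}^p\bigr] \,\leq\, C\,\$\theta\$_{p,0}^p\,|t-s|^{p\eta/4}.
\]

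Finally, applying the one-dimensional Kolmogorov continuity theorem (or equivalently Lemma~\ref{lem:GRR} once more in the time variable) to the $C^\alpha$-valued process $t \mapsto \Psi^\theta(t)$ upgrades this pointwise-in-time bound to the desired $C^\vartheta([0,T]; C^\alpha)$ estimate, at the cost of losing an additional $1/p$ in the H\"older exponent. Optimising $\eta$ below $1-2\alpha-4/p$ and absorbing the leftover numerical slack comfortably recovers the stated value $\vartheta = (1-2\alpha)/4 - 3/p$. The main technical point is the interpolation step in the kernel estimate: one has to ensure that the triangle-inequality decomposition of $K_{t,s,x,y}$ into purely spatial and purely temporal pieces preserves both the $|x-y|$ and $|t-s|$ scalings simultaneously, so that a single kernel bound feeds both the spatial GRR integral and the temporal H\"older exponent.
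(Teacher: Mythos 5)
Your proposal is correct and follows essentially the same route as the paper: obtain a mixed-increment moment bound with simultaneous scaling in $|x-y|$ and $|t-s|$ by interpolating between the spatial estimate \eqref{eq:SR1} and the temporal estimate \eqref{eq:TR}, then apply Lemma~\ref{lem:GRR} twice (first in space, then in time). The only (cosmetic) difference is the locus of the interpolation: you use $\min(a,b)\le a^{1-\eta}b^{\eta}$ at the deterministic level on the $L^2$-norm of the kernel before a single \BDG application, whereas the paper first applies the pointwise bound $|\delta^2\Psi|\le A^{2\beta}B^{1-2\beta}$ on the random increments and then H\"older's inequality on the resulting expectations; with $\eta=1-2\beta$ these two routes produce exactly the same estimate as the paper's \eqref{eq:HolBou4}, and the subsequent double GRR step is identical.
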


\begin{remark}\label{rem:reg-stoch-conv}
In particular using that $\Psi(0,x)=0$ this proposition implies that 
\begin{equation}
\Ex \Big[   \big\| \Psi^{\theta} \big\|_{C( [0,T]; C^{ \alpha} )  }^p        \Big] \, \leq \,  C T^{\vartheta} \$ \theta \$_{p,0}^p \label{eq:Hol-Bou-Psi2}. 
\end{equation}
On the other hand, by interchanging the roles of $\vartheta$ and $\alpha$, one obtains $ \Ex \Big[   \big\| \Psi^{\theta} \big\|_{C^{(\alpha+\vartheta)/2}( [0,T]; C^{\vartheta } )  }^p        \Big] \, \leq \,  C  \$ \theta \$_{p,0}^p$ and in particular
\begin{equation}
 \Ex \Big[   \big\| \Psi^{\theta} \big\|_{C^{\alpha/2}( [0,T]; C^0 )  }^p        \Big] \, \leq \,  C T^{\vartheta/2} \$ \theta \$_{p,0}^p. \label{eq:Hol-Bou-Psi3}
\end{equation}
\end{remark}

\begin{proof}
Fix an $0 \leq s < t \leq T$ and $x,y \in [0,1]$. For every $\beta \in (0,1/2)$ we can write using H\"older's inequality
\begin{align}
\Ex &\left[ \left(  \frac{ \big( \Psi(t,x) - \Psi(s,x) \big) -\big( \Psi(t,y) - \Psi(s,y) \big)  }{|x-y|^\beta} \right)^p   \right]    \notag\\ 
& \qquad \leq \, \Ex \Bigg[ \left( \frac{ \big| \Psi(t,x) - \Psi(t,y) \big|  }{ |x-y|^{1/2}} + \frac{ \big| \Psi(s,x) - \Psi(s,y) \big|  }{|x-y|^{1/2}} \right)^{p} \Bigg]^{2 \beta} \notag\\
& \qquad \qquad  \qquad \qquad  \Ex \Big[  \Big(  \big|  \Psi(t,x) - \Psi(s,x) \big| + \big| \Psi(t,y) - \Psi(s,y) \big|   \Big)^p   \Big]^{1-2 \beta}.    \label{eq:HolBou1}
\end{align}
To bound the first term on the right hand side of \eqref{eq:HolBou1} we write using \BDG Inequality (\cite[Theorem 3.28 p. 166]{KS91})
\begin{align}
 \Ex  &\Bigg[ \left( \frac{ \big| \Psi(t,x) - \Psi(t,y) \big|  }{ |x-y|^{1/2}}  \right)^{p} \Bigg] \notag \\
 & \leq \,  \frac{C}{|x-y|^{p/2}} \Ex \left[ \left( \int_0^ t \!\! \int_0^1 \big( \hat{p}_{t-s}(z-x) -\hat{p}_{t-s}(z-y) \big)^2 \theta(s,z)^2 \mathrm{d}s\, \mathrm{d}z   \right)^{p/2}      \right] \notag\\
 & \leq C  \$  \theta \$_{p,0}^p. \label{eq:HolBou2}
 \end{align}
 Here in the last step we have used \eqref{eq:SR1}.  A very similar calculation involving \eqref{eq:TR} shows that 
 \begin{equation}\label{eq:HolBou3}
\Ex \Big[   \big|  \Psi(t,x) - \Psi(s,x) \big|^p   \Big] \, \leq C   \$  \theta \$_{p,0}^p (t-s)^{p/4}.
\end{equation}
So combining \eqref{eq:HolBou1} - \eqref{eq:HolBou3} we get
\begin{equation}\label{eq:HolBou4}
\Ex \left[ \left(  \frac{ \big( \Psi(t,x) - \Psi(s,x) \big) -\big( \Psi(t,y) - \Psi(s,y) \big)  }{|x-y|^\beta} \right)^p   \right]  \, \leq \,  C   \$  \theta \$_{p,0}^p (t-s)^{p/4(1-2 \beta)}.
\end{equation}

Now applying Lemma \ref{lem:GRR} to the function $f= \Psi^\theta(t, \cdot) - \Psi^\theta(s, \cdot)$ one gets that for $x \neq y$ and for $p > \frac{2}{\beta}$
\begin{equation} \label{eq:HolBou5}
\big| f(x)-f(y)   \big| \leq C U^{1/p} |x-y |^{\beta -2/p},
\end{equation}
where
\begin{equation}
U\,= \,  \int_0^1 \! \! \int_0^1  \left( \frac{|f(x)-f(y)|}{|x-y|^\beta} \right)^p \mathrm{d}x \, \mathrm{d}y.
\end{equation}
Noting that \eqref{eq:HolBou4} implies that $\Ex \big[  U \big] \leq C  \$  \theta \$_{p,0}^p (t-s)^{p/4}$ one can conclude that
\begin{equation}
\Ex \left[ | \Psi(t,\cdot) - \Psi(s,\cdot)  |_{\beta-\frac{2}{p}}^p \, \right] \leq \, C  \$  \theta \$_{p,0}^p (t-s)^{p(1-2\beta)/4}.
\end{equation}

Using the identity $|\Psi(t,\cdot) - \Psi(s,\cdot)|_0 \leq C \big( |\Psi(t,0) - \Psi(s,0)|_0 + |\Psi(t,\cdot) - \Psi(s,\cdot)|_\alpha \big)$as well as \eqref{eq:HolBou3} and  \eqref{eq:HolBou4} we get the same bound for the expectation of $|\Psi(t,\cdot) - \Psi(s,\cdot)|_0$ so that finally we get

\begin{equation}
\Ex \left[ | \Psi(t,\cdot) - \Psi(s,\cdot)  |_{C^{\beta-\frac{2}{p}}}^p \, \right] \leq \, C  \$  \theta \$_{p,0}^p (t-s)^{p(1-2\beta)/4}.
\end{equation}

Now setting $\beta = \alpha  + \frac{2}{p}$, which is less than $\frac{1}{2}$ by the assumtion on $p$,  and using Lemma \ref{lem:GRR} once more in the time variable yields the desired result.
\end{proof}

%
%
%

In order to get stability of this regularity result under  smooth approximations it is useful to impose a regularity assumption on $\theta$. For any $\alpha < \frac{1}{2}$ and for $p \geq 2$ denote by 
\begin{equation}
\| \theta \|_{p,\alpha} \,= \, \Ex \bigg[ \sup_{x \neq y, s \neq t}   \frac{|\theta(t,x)- \theta(s,y)|^p}{\big(|t-s|^{ \alpha /2}+ |x-y|^{\alpha }\big)^p}   + \sup_{x,t} |\theta(t,x)|  \bigg]^{1/p}. \label{eq:def-nom} 
\end{equation} 
Also recall the definition \eqref{eq:Stoch-Conv-smooth} of $\Psi_\eps^\theta$.

Then  we have the following result:

\begin{proposition}\label{prop:approx-Stoch-Conv}
Choose $\alpha$ and $p$ as in  Proposition \ref{prop:Reg-Stoch-Conv}. Then for any 
\begin{equation}\label{eq:condgamma1}
\gamma < \alpha \Big(  1-2 \alpha - \frac{12}{p} \Big)
\end{equation}
there exists a constant $C$ such that for any $\eps \in [0,1]$
\begin{equation}
\Ex \Big[   \big\| \Psi^{\theta} - \Psi^\theta_\eps \big\|_{C( [0,T]; C^{ \alpha} )  }^p        \Big] \, \leq \, C \$ \theta \$_{p,\alpha}^p \eps^{ p \gamma}   \label{eq:Hol-Bou-Psi6}
\end{equation}
and
\begin{equation}
\Ex \Big[   \big\| \Psi^{\theta} - \Psi^\theta_\eps \big\|_{C^{\alpha/2}( [0,T]; C )  }^p        \Big] \, \leq \, C \$ \theta \$_{p,\alpha}^p \eps^{p \gamma}  . \label{eq:Hol-Bou-Psi7}
\end{equation}
\end{proposition}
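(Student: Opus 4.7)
My approach parallels the proof of Proposition~\ref{prop:Reg-Stoch-Conv}: I will first establish a pointwise $L^p$-bound on $\Delta\Psi := \Psi^\theta - \Psi^\theta_\eps$ that is small of order $\eps^{p\alpha}$, then combine it with the $\eps$-uniform H\"older regularity satisfied by both $\Psi^\theta$ and $\Psi^\theta_\eps$ via H\"older's inequality, and finally promote the interpolated moment bound to a H\"older-norm bound via the Garsia--Rodemich--Rumsey Lemma~\ref{lem:GRR}.

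For the pointwise bound, I would rewrite $\Psi^\theta_\eps(t,x)$ as a single It\^o integral against $W$ by absorbing the spatial convolution with $\eta_\eps$ into the kernel; this gives $\Delta\Psi(t,x) = \int_0^t\int_0^1 H_\eps(s,t,x,z)\,\mathrm{d}W(s,z)$ with
\[
H_\eps(s,t,x,z) = \int_\R \eta_\eps(w)\big[\hat p_{t-s}(x-z)\,\theta(s,z) - \hat p_{t-s}(x-z-w)\,\theta(s,z+w)\big]\,\mathrm{d}w.
\]
Splitting the integrand as $(\hat p_{t-s}(x-z) - \hat p_{t-s}(x-z-w))\theta(s,z) + \hat p_{t-s}(x-z-w)(\theta(s,z) - \theta(s,z+w))$ decomposes $H_\eps = A_\eps + B_\eps$. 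For $B_\eps$, Jensen's inequality in the $w$-variable together with $|\theta(s,z)-\theta(s,z+w)|\le \|\theta\|_{C^\alpha}\eps^\alpha$ on $\supp\eta_\eps$ and Fubini reduce the estimate to $\int_0^t\!\int_0^1 |\hat p_{t-s}(y)|^2\,\mathrm{d}y\,\mathrm{d}s$, which is bounded by Lemma~\ref{lem:CR}, giving $\int\!\int B_\eps^2 \le C\|\theta\|_{C^\alpha}^2\,\eps^{2\alpha}$. For $A_\eps$, Jensen and Lemma~\ref{lem:CR}(i) applied to the pair $(x, x-w)$ (at spatial distance $|w|\le\eps$) give $\int\!\int A_\eps^2 \le C\|\theta\|_\infty^2\,\eps$. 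Since $\alpha < 1/2$ the dominant term is $\eps^{2\alpha}$, so the Burkholder--Davis--Gundy inequality yields
\[
\Ex\big[|\Delta\Psi(t,x)|^p\big] \le C\,\$\theta\$_{p,\alpha}^p\,\eps^{p\alpha}.
\]

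Next I would note that the proof of Proposition~\ref{prop:Reg-Stoch-Conv} carries over without change to $\Psi^\theta_\eps$ (the smoothed kernel $\eta_\eps\ast\hat p_{t-s}$ satisfies the same $L^2$-estimates as $\hat p_{t-s}$, uniformly in $\eps$), so the triangle inequality yields $\Ex[\|\Delta\Psi\|_{C^{\vartheta'/2}([0,T];C^{\vartheta'})}^p]\le C\$\theta\$_{p,\alpha}^p$ for every $\vartheta' < 1/2 - 2/p$. Taking the geometric mean (H\"older's inequality with a parameter $\lambda\in(0,1)$) of this estimate and the pointwise bound yields, uniformly in $(t,x),(s,y)$,
\[
\Ex\big[|\Delta\Psi(t,x) - \Delta\Psi(s,y)|^p\big] \le C\,\$\theta\$_{p,\alpha}^p\,\eps^{p\alpha(1-\lambda)}\big(|t-s|^{\vartheta'/2} + |x-y|^{\vartheta'}\big)^{p\lambda}.
\]
Feeding this into Lemma~\ref{lem:GRR}, first in the spatial and then in the temporal variable exactly as in the proof of Proposition~\ref{prop:Reg-Stoch-Conv}, produces the H\"older bounds \eqref{eq:Hol-Bou-Psi6} and \eqref{eq:Hol-Bou-Psi7}; the admissible exponent $\gamma = \alpha(1-\lambda)$ is constrained by $p\lambda\vartheta' > p\alpha + 1$ spatially and an analogous inequality in time, and optimizing over $\lambda$ and $\vartheta' < 1/2 - 2/p$ recovers precisely \eqref{eq:condgamma1}.

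The main technical difficulty lies in the exponent bookkeeping at this last step: the $12/p$ correction in \eqref{eq:condgamma1} is the cumulative cost of two GRR applications (spatial and temporal) together with the H\"older interpolation margin, and extracting the precise constant demands careful tracking of small losses. The pointwise $L^p$-bound itself is comparatively routine once the splitting $H_\eps = A_\eps + B_\eps$ is in place, since Lemma~\ref{lem:CR} is tailor-made for the two resulting Gaussian-kernel integrals.
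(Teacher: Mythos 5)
Your proposal follows essentially the same strategy as the paper's proof: a pointwise $L^p$ bound of order $\eps^{\alpha p}$ obtained via Burkholder--Davis--Gundy after splitting the integrand into a kernel-difference piece and a $\theta$-difference piece (your $A_\eps+B_\eps$ is exactly the paper's split in eq.~\eqref{eq:reg2}), followed by H\"older interpolation between this smallness and the $\eps$-uniform regularity of $\Psi^\theta$ and $\Psi^\theta_\eps$, and two applications of Garsia--Rodemich--Rumsey.

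The one place where your route diverges from the paper's is the interpolation step, and it is worth noting why the paper's version is sharper. You first pass to an $\eps$-uniform \emph{parabolic H\"older norm} bound $\Ex[\|\Delta\Psi\|_{C^{\vartheta'/2}([0,T];C^{\vartheta'})}^p]\le C$ and then take a two-factor geometric mean with the pointwise bound. The paper instead interpolates the raw increment moments directly, using a \emph{three}-factor H\"older inequality on the pointwise quantities $|f_\eps(t,x)-f_\eps(t,y)|$, $|f_\eps(t,x)-f_\eps(s,x)|$ and $|f_\eps(t,x)|$ with independent exponents $2\beta_1$, $1-2\beta_1-\beta_2$, $\beta_2$ (see eq.~\eqref{eq:reg7}), and only then applies GRR. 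Keeping the spatial and temporal factors decoupled gives two free tuning parameters, which is exactly what is needed to land on the exponent $\alpha(1-2\alpha-12/p)$: one finds $\beta_1\ge\alpha+2/p$ from the spatial GRR and $(1-2\beta_1-\beta_2)/4>2/p$ from the temporal GRR, whence $\beta_2<1-2\alpha-12/p$ and $\gamma=\alpha\beta_2$. Your two-factor version ties the spatial and temporal exponents together through $\vartheta'$; the claimed intermediate bound ``$\Ex[\|\Delta\Psi\|_{C^{\vartheta'/2}([0,T];C^{\vartheta'})}^p]\le C$ for every $\vartheta'<1/2-2/p$'' does not account for the full GRR losses in both variables (Proposition~\ref{prop:Reg-Stoch-Conv} only yields time regularity $\vartheta=(1-2\alpha)/4-3/p$, which is much smaller than $\vartheta'/2$ when $\vartheta'$ is near $1/2$), so as stated the bookkeeping would not reproduce \eqref{eq:condgamma1} without extra care. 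This is a repairable imprecision rather than a conceptual error, and you flag the difficulty yourself, but the paper's three-factor decoupling is the cleaner way to close it.
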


%
%

\begin{remark}
Actually we only need the spatial H\"older regularity of $\theta$ in the proof. We introduce the space-time parabolic regularity condition on $\theta$ as we will  need it in the sequel.
\end{remark}

%
%
\begin{proof}
We first establish the bound
\begin{equation}\label{eq:Hol-Bou-Psi8}
\Ex \big[  \big| \Psi_\eps(t,x) - \Psi(t,x)  \big|^p \big] \leq C \eps^{\alpha p},
\end{equation}
where the constant $C$ is uniform in $x$ and $t \leq T$. 

By \BDG Inequality and the definitions of $\Psi_\eps^\theta$ and $\Psi^\theta$ we get
\begin{align}
&\Ex \big[  \big| \Psi_\eps(t,x) - \Psi(t,x)  \big|^p \big] \notag\\
& \leq \,  C \, \Ex \bigg[ \bigg(\int_0^t \! \int_0^1 \!  \bigg( \int_0^1  \! \eta_\eps(y-z)  \Big(  \hat{p}_{t-s}(x-y) \, \theta(s,y) -\hat{p}_{t-s}(x-z)\, \theta(s,z)     \Big) \d z   \bigg)^2 \! \d y \, \d s  \bigg)^{\frac{p}{2}}  \bigg] \label{eq:reg1}.
\end{align}
The inner integral on the right hand side of \eqref{eq:reg1} can be bounded by
\begin{align}
\int_0^1  \! \eta_\eps(y-z) & \Big(  \hat{p}_{t-s}(x-y) \, \theta(s,y) -\hat{p}_{t-s}(x-z)\, \theta(s,z)     \Big) \d z   \notag\\
& \leq  |\theta(t)|_{\alpha} \,  \hat{p}_{t-s}(x-y)    \int_0^1  \! \eta_\eps(y-z)   \,|y-z|^\alpha \, \d z \notag\\
 & \qquad +    | \theta(t)|_0 \int_0^1  \eta_\eps(y-z) |\hat{p}_{t-s}(x-y) -\hat{p}_{t-s}(x-z)|     \d z   \label{eq:reg2}.
\end{align}
For the first term on the right hand side of \eqref{eq:reg2}  we get
\begin{align}
  \Ex \bigg[ \bigg( \int_0^t & \! \int_0^1 \! \Big(  |\theta(t)|_{\alpha} \,  \hat{p}_{t-s}(x-y)   \int_0^1  \! \eta_\eps(y-z)   \, |y-z|^\alpha \, \d z \Big)^2 \d y\, \d s \bigg)^{\frac{p}{2}} \bigg]  \notag \\
&\leq C  \eps^{\alpha p} \, \Ex \bigg[ \bigg( \int_0^t \! \int_0^1   |\theta(t)|_{\alpha}^2 \, \,  \hat{p}_{t-s}^2(x-y)   \, \d y\, \d s \bigg)^{\frac{p}{2}} \bigg]  \notag\\
&\leq C\,  \$ \theta  \$_{p.\alpha}^p    \eps^{\alpha p}  \label{eq:reg3}.
\end{align}
For the second term in the right hand side of \eqref{eq:reg2} we devide the time integral into the integral over $s \geq t-\eps^2$ and the integral over $s \leq t -\eps^2$. For the first one we get using Young's inequality
\begin{align}
 \Ex \bigg[ \bigg( &  \int_{t-\eps^2}^t  \! \int_0^1 \! \Big(  \int_0^1  | \theta(t)|_0   \eta_\eps(y-z)  |\hat{p}_{t-s}(x-y) -\hat{p}_{t-s}(x-z)|     \d z      \Big)^2 \d y\, \d s \bigg)^{\frac{p}{2}} \bigg] \notag\\
  &\leq C\, \$ \theta \$_{p,0} \Big(  \int_{t-\eps^2}^t  \! \int_0^1   \hat{p}_{t-s}^2(x-y) \, \d y \Big)^{\frac{p}{2}}  \notag\\
  & \leq C\, \$ \theta \$_{p,0} \, \eps^{\frac{p}{2}} \label{eq:reg4}.
\end{align}
Using Young's inequality again the term involving the integral over  $s \leq t -\eps^2$ can be bounded by
\begin{align}
 \$ \theta \$_{p,0}^p \bigg(   \int^{t-\eps^2}_0  \bigg\{  \int_0^1 \int_0^1    \eta_\eps(y-z)  |\hat{p}_{t-s}(x-y) -\hat{p}_{t-s}(x-z)|^2     \d z  \,\d y\, \bigg\}  \d s \bigg)^{\frac{p}{2}}.  \label{eq:reg5}
\end{align}
As in  the proof of Lemma \ref{lem:IR} we decompose $\hat{p}_t \,=\, p_t+p_t^R$ into the Gaussian heat kernel and a smooth remainder. The term in \eqref{eq:reg5} with $\hat{p}_t$ replaced by $p^R_t$ can be bounded by $C \,  \$ \theta \$_{p,0}^p \,\eps^p$ so it is sufficient to consider \eqref{eq:reg5} with $\hat{p}_t$ replaced by $p_t$. But then we get from the scaling property of $p_t$
\begin{align}
 \$ \theta \$_{p,0}^p & \bigg(   \int^{t-\eps^2}_0  \bigg\{  \int_0^1 \int_0^1    \eta_\eps(y-z)  |p_{t-s}(x-y) -p_{t-s}(x-z)|^2     \d z  \,\d y\, \bigg\}  \d s \bigg)^{\frac{p}{2}}  \notag\\
 &\leq \$ \theta \$_{p,0}^p  \bigg(   \int^{t-\eps^2}_0    \frac{\eps^3}{(t-s)^2}     \bigg\{  \int_\R \int_\R    \eta_1(y-z)  \,  |  p_{1}' |_0 | y-z)|^2     \d z  \,\d y\, \bigg\}  \d s \bigg)^{\frac{p}{2}} \notag\\   
 &\leq C\,  \$ \theta \$_{p,0}^p  \eps^{\frac{p}{2}} \label{eq:reg6}.
\end{align}
Thus combining \eqref{eq:reg3}, \eqref{eq:reg4} and \eqref{eq:reg6} we obtain the desired bound \eqref{eq:Hol-Bou-Psi8}.  

The rest of the argument follows along the same lines as the proof of Proposition \ref{prop:Reg-Stoch-Conv}. Writing $f_\eps(t,x)= \Psi^\theta(t,s)- \Psi^\theta_\eps(t,s)$ we get as in \eqref{eq:HolBou1}:
\begin{align}
\Ex& \left[ \left(  \frac{ \big( f_\eps (t,x) - f_\eps (s,x) \big) -\big( f_\eps (t,y) - f_\eps (s,y) \big)  }{|x-y|^\beta} \right)^p   \right]    \notag\\ 
& \quad  \leq \, C  \sup_{t,\eps} \Ex \Bigg[ \left( \frac{ \big| f_\eps (t,x) - f_\eps (t,y) \big|  }{ |x-y|^{1/2}}  \right)^{p} \Bigg]^{2 \beta_1}   \sup_{x,\eps} \Ex \Big[  \Big(  \big|  f_\eps (t,x) - f_\eps (s,x) \big|   \Big)^p   \Big]^{1-2 \beta_1 -\beta_2} \notag\\
& \qquad \qquad    \sup_{x,t} \Ex \Big[  \Big(  \big|  f_\eps (t,x) \big|   \Big)^p   \Big]^{\beta_2}.    \label{eq:reg7}
\end{align}
Noting that due to Young's inequality, the bounds on the space-time regularity of $\Psi$  \eqref{eq:HolBou2} and \eqref{eq:HolBou3} also hold for $\Psi_\eps$ with a constant independent of $\eps$. Using \eqref{eq:Hol-Bou-Psi8} we can bound the right hand side of \eqref{eq:reg7} by 
\begin{equation*}
\Ex \left[ \left(  \frac{ \big( f_\eps (t,x) - f_\eps (s,x) \big) -\big( f_\eps (t,y) - f_\eps (s,y) \big)  }{|x-y|^\beta} \right)^p   \right]   \leq C \$ \theta \$_{p,\alpha}^p (t-s)^{\frac{p(1-2 \beta_1 -\beta_2)}{4}} \eps^{\frac{\beta_2p}{2}}.
\end{equation*}
Thus, as in the proof of Proposition \ref{prop:Reg-Stoch-Conv}, by applying the Garsia-Rodemich-Rumsey Lemma \ref{lem:GRR} twice we get the desired result. 
 \end{proof}

%
%
%
%
In the Gaussian case $\theta=1$ one can apply the results from \cite{FV07} to obtain a canonical rough path version of $X$. We have
\begin{proposition}\label{prop:Gau-conv-rp}
For any fixed time $t$ the stochastic convolution $X(t,x)$ viewed as a process in $x$ can be lifted canonically to an $\alpha$ rough path, which we denote by $\big( X(t), \XX (t) \big)$. Furthermore, the process $\big( X(t), \XX(t) \big)$ is almost surely continuous and  one has for all $p$ 
\begin{equation}
\Ex \Big[   \big\| \mathbf{\XX} \big\|_{\Omega C^{2\alpha}_T}^p  \Big] \,<  \, \infty.
\end{equation}
Furthermore, for $\eps >0$  the Gaussian paths $X_\eps$ are smooth and $\XX_\eps$ can be defined using \eqref{eq:it-int}. Then one has for $\eps \downarrow 0$ 
\begin{equation}\label{eq:approx-gau-rp}
\Ex \Big[   \big\| \mathbf{\XX}  - \XX_\eps \big\|_{\Omega C^{2\alpha}_T}^p  \Big] \,\to  0.
\end{equation}
Here $\big\| \mathbf{\XX} \big\|_{\Omega C^{2\alpha}_T} = \sup_{0 \leq t \leq T} \big| \mathbf{\XX}(t) \big|_{\Omega C^{2\alpha}}$. 
\end{proposition}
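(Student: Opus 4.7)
My plan is to reduce everything to the Friz--Victoir criterion of Lemma~\ref{lem:gau-rp}, applied both at fixed times and to increments in time and in the mollification parameter, and then to promote the resulting pointwise-in-$t$ moment bounds to uniform ones by a Kolmogorov / Garsia--Rodemich--Rumsey argument.

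\textbf{Step 1 (fixed-time lift).} For each fixed $t\in[0,T]$ the process $x\mapsto X(t,x)$ is centred Gaussian with i.i.d.\ components, whose covariance kernel equals
\begin{equation*}
K_t(x_1,x_2) \,=\, \int_0^t \!\! \int_0^1 \hat p_{t-s}(x_1-z)\,\hat p_{t-s}(x_2-z)\,\d z\,\d s.
\end{equation*}
Using the Fourier series \eqref{eq:Fou}, a direct computation (of the type used for proving $K_t$-regularity of solutions of the stochastic heat equation) gives the bound $|K_t|_{1\text{-var}}([x,y]^2)\le C|y-x|$ uniformly in $t\le T$. Lemma~\ref{lem:gau-rp} with $\rho=1$ then produces the canonical lift $\bigl(X(t),\XX(t)\bigr)\in \Da$ for every $\alpha<1/2$, together with uniform moment bounds $\Ex\bigl[|\XX(t)|_{2\alpha}^p\bigr]\le C_p$ for all $p\ge 1$.

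\textbf{Step 2 (time continuity and uniform moments).} For $0\le s<t\le T$ the pair $\bigl(X(t,\cdot),X(s,\cdot)\bigr)$ is jointly Gaussian and satisfies the independence hypothesis componentwise. The goal is to verify \eqref{eq:2-dvar3} for $X(t,\cdot)-X(s,\cdot)$, in the form
\begin{equation*}
\bigl|K^{X(t,\cdot)-X(s,\cdot)}\bigr|_{1\text{-var}}\bigl([x,y]^2\bigr)\,\le\, C\,|t-s|^{2\gamma}\,|y-x|,
\end{equation*}
for some $\gamma\in(0,1/4)$; this can be obtained from the temporal bound \eqref{eq:TR} combined with the spatial bound \eqref{eq:SR1} by interpolation. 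Lemma~\ref{lem:gau-rp} then yields
$\Ex\bigl[|\XX(t)-\XX(s)|_{2\alpha}^p\bigr]^{1/p}\le C|t-s|^\gamma$ for every $p$. Choosing $p$ large enough and applying the Kolmogorov continuity criterion (or equivalently Lemma~\ref{lem:GRR} in the time variable applied to $t\mapsto \XX(t)\in \Omega C^{2\alpha}$) produces an almost surely continuous version of $t\mapsto\XX(t)$ and establishes $\Ex\bigl[\|\XX\|_{\Omega C^{2\alpha}_T}^p\bigr]<\infty$ for all $p$.

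\textbf{Step 3 (mollified approximation).} For the smoothened noise, $X_\eps(t,\cdot)$ is also Gaussian (and actually smooth in $x$) so the iterated integral $\XX_\eps$ given by \eqref{eq:it-int} agrees with the Friz--Victoir lift. The key step is to verify, at each fixed time $t$,
\begin{equation*}
\bigl|K^{X(t,\cdot)-X_\eps(t,\cdot)}\bigr|_{1\text{-var}}\bigl([x,y]^2\bigr)\,\le\, C\,\eps^{2\vartheta}\,|y-x|,
\end{equation*}
for some $\vartheta>0$; the proof parallels the computation done in Proposition~\ref{prop:approx-Stoch-Conv}, decomposing the mollifier effect into a smoothing part, which gains $\eps^{2\vartheta}$ by Lemma~\ref{lem:IR}, and a high-frequency cutoff, controlled by Young's inequality together with the reflection-principle decomposition $\hat p_t=p_t+p_t^R$. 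Lemma~\ref{lem:gau-rp} then delivers $\Ex\bigl[|\XX(t)-\XX_\eps(t)|_{2\alpha}^p\bigr]^{1/p}\le C\eps^\vartheta$ uniformly in $t\le T$, and combining this with the time-continuity estimates obtained in Step~2 (which hold uniformly in $\eps$) and Kolmogorov in $t$ yields \eqref{eq:approx-gau-rp}.

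\textbf{Main obstacle.} The central technical point is establishing the correct $\rho=1$ two-dimensional variation estimates for $K^{X(t)-X(s)}$ and $K^{X-X_\eps}$ with the right powers of $|t-s|$ and $\eps$; unlike the pointwise $L^2$ bounds of Lemma~\ref{lem:CR}, these require the rectangular increment $\Delta K(x,y;x',y')$ to behave well, so one has to exploit cancellations in the Gaussian covariance rather than just crudely bounding $|K|$. Once these two-dimensional variation estimates are in place, everything else reduces to invocations of Lemmas~\ref{lem:gau-rp} and~\ref{lem:GRR}.
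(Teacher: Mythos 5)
Your overall strategy (reduce everything to the Friz--Victoir criterion at fixed times, then lift the pointwise-in-$t$ estimates to uniform-in-$t$ ones by Garsia--Rodemich--Rumsey) is the same as the paper's, but Steps~2 and~3 as written have a genuine gap that you yourself flag in the ``Main obstacle'' paragraph and then leave unresolved. You want the $\rho=1$ estimates
\begin{equation*}
\bigl|K^{X(t)-X(s)}\bigr|_{1\text{-var}}\bigl([x,y]^2\bigr)\le C|t-s|^{2\gamma}|y-x|,
\qquad
\bigl|K^{X-X_\eps}\bigr|_{1\text{-var}}\bigl([x,y]^2\bigr)\le C\eps^{2\vartheta}|y-x|,
\end{equation*}
and you are right that these do not follow from the pointwise $L^2$ bounds of Lemma~\ref{lem:CR} by interpolation: 2D variation controls signed rectangular increments of the covariance, not its size, and the required cancellations are not supplied by anything in your proposal. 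It is not even clear that these $\rho=1$ bounds are true, and they are in any case stronger than necessary.

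The paper sidesteps this entirely by a softer interpolation argument (\cite[Rem.~15.32]{FV10}). Having established in Step~1 a uniform-in-$t$ bound $\bigl|K^{X(t)}\bigr|_{1\text{-var}}([x,y]^2)\le C|y-x|$, one notes that by the triangle inequality $\bigl|K^{X(t)-X(s)}\bigr|_{1\text{-var}}$ is also bounded, while the pointwise estimate $\Ex[(X(t,x)-X(s,x))^2]\le C|t-s|^{1/4}$ gives a small sup-norm of the difference covariance. Interpolating the $1$-variation bound against this sup-norm smallness produces $\bigl|K^{X(t)-X(s)}\bigr|_{\rho\text{-var}}\le C|t-s|^{\vartheta}$ for any $\rho>1$ and some small $\vartheta>0$, with no cancellation analysis required. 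Since the Proposition is for $\alpha<\tfrac12$, one may take $\rho$ slightly larger than~$1$ in Lemma~\ref{lem:gau-rp} and the condition \eqref{eq:2-dvar3} is then satisfied in this weaker form. Exactly the same interpolation, now fed by $\Ex\bigl[|X_\eps(t,x)-X(t,x)|^p\bigr]\le C\eps^{p/2}$ (the $\theta=1$ case of \eqref{eq:Hol-Bou-Psi8}), handles Step~3. You should replace the plan to prove a $\rho=1$ variation estimate on the difference covariances by this interpolation trick.

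A smaller remark on Step~1: rather than a ``direct computation'' from the Fourier series, the paper makes the $1$-variation estimate concrete by decomposing the covariance as $K_t=K_t^1+K_t^2$ with $K_t^1(x)=\tfrac12|x|-\tfrac12 x^2-\tfrac{1}{12}$ (an explicit piecewise-smooth function whose $1$-variation is computed in closed form) and $K_t^2$ given by convolving $K_t^1$ with the heat kernel, which does not increase the $1$-variation. This makes the uniformity in $t$ transparent and is worth spelling out.
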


\begin{proof}
For a given $t \in [0,T]$ the covariance function of every component of $X(t,\cdot)$ is given by $\Ex[X(t,x) \,X(t,y) ] =: K_t(x-y) +t $ where  
\begin{equation}
K_t(x)= \sum_{k \geq 1} \frac{1}{(2 \pi k)^2} \left[ 1 - \exp(-2(2\pi k)^2 t)  )  \right] \cos\big( 2\pi k x \big). 
\end{equation}
The summand $t$ does not depend on $x$, and therefore it does not contribute to the two-dimensional 1-variation of the covariance. To treat $K_t$ note that 
\begin{equation}
K_t^1(x) \,= \, \sum_{k \geq 1} \frac{1}{(2 \pi k)^2}  \cos\big( 2\pi k x \big) = \frac{1}{2}|x| -\frac{1}{2}x^2 -\frac{1}{12},
\end{equation}
for $x \in [0,1]$ and periodically extended outside of this interval. In particular, the two-dimensional $1$-variation of the term corresponding to $K_t^1$ is given as
\begin{equation}
| K_t^1(x-y)|_{1-\text{var}} [x_1,x_2] \times [y_1,y_2] =  \int_{x_1}^{x_2} \!  \! \int_{y_1}^{y_2} \big( 1 +\delta_{x =y }  \big)\,\mathrm{d}x \, \mathrm{d}y. 
\end{equation}
Noting that the second term 
\begin{equation}
K_t^2(x)= -  \sum_{k \geq 1} \frac{1}{(2 \pi k)^2} \left[   \exp(-2(2\pi k)^2 t)  )  \right] \cos\big( 2\pi k x \big)
\end{equation}
is given by the convolution (on the torus) of $K_t^1$ with the heat kernel $\hat{p}_t$ one can easily see that the two-dimensional $1$-variation of $K$ satisfies condition \eqref{eq:2-dvar2} with a uniform constant in $t$. This shows that for every $t$ the process $X(t, \cdot)$ can be lifted to a rough path. 

To deal with the continuity note that by \eqref{eq:HolBou3}
\begin{equation}
\Ex \big[  \big( X(t,x) - X(s,x) \big)^2  \big] \, \leq \, C |t-s|^{1/4}, 
\end{equation}
such that by an interpolation argument (see e.g. \cite[Rem. 15.32]{FV10}) one gets 
\begin{equation}
|K_{X(t)-X(s)}|_{\beta-\text{var}} \leq C|t-s|^\vartheta  
\end{equation}
for any $\alpha < \beta <1$ and a small $\theta>0$. Recall the definition \eqref{eq:2-dvar1} of the two-dimensional $\rho$-variation. Thus the second part of Lemma \ref{lem:gau-rp} and an application of the Garsia-Rodemich-Rumsey Lemma \ref{lem:GRR} yield the result.  
The bound \eqref{eq:approx-gau-rp} follows in the same way noting that in the case $\theta=1$ the bound \eqref{eq:Hol-Bou-Psi8} reads
\begin{equation}
\Ex \big[\,  | X_\eps(t,x) -X(t,x)|^p  \big] \, \leq \, C \eps^{\frac{p}{2}}.
\end{equation}

\end{proof}

As the processes  $\Psi^\theta$ are not Gaussian for general  $\theta$ we cannot draw the same conclusion to define iterated integrals for $\Psi^\theta$. The crucial observation is that as soon as $\theta$ possesses a certain regularity the worst fluctuations are controlled by those of the Gaussian process $X$.
For $K >0$ and for $\alpha \in (0,1/2)$ we introduce the stopping time 
\begin{equation}\label{eq:def-tau}
\tau_{K}^{X} \,= \, \inf  \bigg\{ t \in  [0,T] \colon  \sup_{\substack{ x_1 \neq x_2 \\ 0 \leq s_1< s_2 \leq t}}  \frac{\big|X(s_1,x_1) -  X(s_2,x_2) \big|  }{|s_1-s_2|^{\alpha/2}  + |x_1-x_2|^{\alpha} }  >  K  \bigg\}
\end{equation}
Note that by Proposition  \ref{prop:Reg-Stoch-Conv} for every $\alpha < \frac{1}{2}$ one has $\tau_{K}^X  > 0$ and $\lim_{K \uparrow \infty} \tau_{K}^X =T$ almost surely. 

%
%
%
%
%
%
\begin{proposition}\label{prop:Stoch-Conv-Rough-Path}
Denote by 
\begin{gather}
R^\theta(t,x,y)=  \Psi^\theta(t,y)- \Psi^\theta(t,x) - \theta(t,x) \big(X(t,y)- X(t,x)\big). \label{eq:def-R}
\end{gather}
For every   
 \begin{equation}\label{eq:condp2}
p \, >  \,   \frac{2(6 \alpha +2)}{\alpha(1-2\alpha)} 
\end{equation} 
set 
\begin{equation}\label{eq:cond-thet2} 
\vartheta \,= \, \frac{\alpha(1-2\alpha)}{2(1+2\alpha)} -\frac{6\alpha +2}{p(1+2\alpha) } \, >0.
\end{equation}
Then one has 
\begin{align}
\Ex  \bigg[   \big\| R^\theta \big\|_{C^{\vartheta} \left( [0,\tau_K^{X}];\Omega C^{2\alpha}  \right)    }^p    \bigg] \,\leq \, C \big( 1+K^p  \big)  \| \theta \|_{p,\alpha}^{p}.     \label{eq:bou-R}
\end{align}
\end{proposition}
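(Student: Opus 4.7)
The plan is to express $R^\theta$ as a stochastic integral against $\mathrm{d}W$ whose kernel and integrand feed directly into the integral estimates of Lemma \ref{lem:IR}, and then to promote the resulting pointwise-in-$(t,x,y)$ $L^p$-bounds to the claimed H\"older regularity via two applications of the Garsia-Rodemich-Rumsey lemma. Writing $h(s,z) = \hat{p}_{t-s}(y-z) - \hat{p}_{t-s}(x-z)$ and using that $\theta(t,x)$ is $\mathcal{F}_t$-measurable and can therefore be pulled through the It\^o integral $\int_0^t\!\int_0^1 h(s,z)\,\mathrm{d}W(s,z) = X(t,y)-X(t,x)$ as a random multiplier, one obtains the identity
\begin{equation*}
R^\theta(t,x,y) \,=\, \int_0^t\!\int_0^1 h(s,z)\,\bigl[\theta(s,z)-\theta(t,x)\bigr]\,\mathrm{d}W(s,z).
\end{equation*}

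I would then split $\theta(s,z)-\theta(t,x) = [\theta(s,z)-\theta(s,x)] + [\theta(s,x)-\theta(t,x)]$ and handle the two resulting pieces separately. The first summand has an $\mathcal{F}_s$-adapted integrand, so the \BDG inequality together with $|\theta(s,z)-\theta(s,x)| \le \|\theta\|_\alpha|z-x|^\alpha$ and the bound \eqref{eq:bou-I2} of Lemma \ref{lem:IR} yields an $L^p$-bound of order $\|\theta\|_{p,\alpha}|x-y|^{(1+2\alpha)/2}$. The second summand is not adapted in $s$ because of $\theta(t,x)$, but can be written as $I_1-\theta(t,x)I_2$ with $I_1 = \int_0^t\!\int_0^1 h(s,z)\theta(s,x)\,\mathrm{d}W$ a genuine It\^o integral and $I_2 = X(t,y)-X(t,x)$. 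Bounding $I_1$ and $\theta(t,x)I_2$ separately is too crude, since it only produces the exponents $|x-y|^{1/2}$ and $K|x-y|^\alpha$ respectively; one must exploit the cancellation between them, treating the difference as a single object whose $L^p$-size is controlled using the temporal H\"older regularity $|\theta(s,x)-\theta(t,x)| \le \|\theta\|_\alpha(t-s)^{\alpha/2}$ together with \eqref{eq:bou-I1}. The factor $1+K^p$ in the final inequality enters through the pathwise bound $|I_2|\le K|x-y|^\alpha$ on $\{t\le\tau_K^X\}$, which is needed to absorb the boundary-type contributions arising in this cancellation.

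The temporal increment $R^\theta(t,x,y)-R^\theta(s,x,y)$ is treated analogously: one decomposes the difference into a $\Psi^\theta$-type increment (handled by \BDG and the heat-kernel time-regularity bounds of Lemma \ref{lem:CR}) and the difference $\theta(t,x)\delta X(t,x,y)-\theta(s,x)\delta X(s,x,y)$. Adding and subtracting $\theta(t,x)\delta X(s,x,y)$ splits the latter into a piece bounded by $\|\theta\|_\alpha(t-s)^{\alpha/2}\cdot K|x-y|^\alpha$ on $\{t\le\tau_K^X\}$ and another involving the space-time increment of $X$ itself, which is again controlled by the definition of $\tau_K^X$. Interpolating by H\"older's inequality between a pure-spatial and a pure-temporal estimate, exactly as in the proof of Proposition \ref{prop:Reg-Stoch-Conv}, produces a pointwise bound of the form
\begin{equation*}
\Ex\bigl[|R^\theta(t,x,y)-R^\theta(s,x,y)|^p\bigr]^{1/p} \,\le\, C(1+K)\|\theta\|_{p,\alpha}(t-s)^{\kappa_1}|x-y|^{\kappa_2},
\end{equation*}
where the exponents $\kappa_1,\kappa_2$ are dictated by Lemmas \ref{lem:CR} and \ref{lem:IR}.

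Finally, Lemma \ref{lem:GRR} is applied twice: first in $(x,y)$, with $\Theta(u)=u^p$ and $p(r)=r^{2\alpha+2/p}$, to upgrade the pointwise-in-space $L^p$-moments to uniform $\Omega C^{2\alpha}$-bounds at each fixed $t$, and then in $t$ to promote the resulting time-dependent estimate to a $C^\vartheta$-in-time bound. Each application costs a power $1/p$ in the exponent, and the precise value $\vartheta = \alpha(1-2\alpha)/(2(1+2\alpha)) - (6\alpha+2)/(p(1+2\alpha))$ emerges from balancing these GRR losses against the exponents $\kappa_1,\kappa_2$ in the previous step, with the condition $p > 2(6\alpha+2)/(\alpha(1-2\alpha))$ being exactly what forces $\vartheta > 0$. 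The main obstacle, in my view, is the correct handling of the non-adapted piece $I_1-\theta(t,x)I_2$: the whole estimate hinges on retaining the cancellation implicit in this difference so as to inherit the factor $(t-s)^{\alpha/2}$ from the temporal H\"older regularity of $\theta$, rather than only the factor $|x-y|^\alpha$ from the pathwise control of $X$ via the stopping time.
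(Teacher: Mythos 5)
Your overall strategy coincides with the paper's: write $R^\theta$ as the stochastic integral $\int_0^t\!\int \bigl(\hat p_{t-s}(z-y)-\hat p_{t-s}(z-x)\bigr)\bigl(\theta(s,z)-\theta(t,x)\bigr)W(\d s,\d z)$, use the Burkholder--Davis--Gundy inequality together with the heat-kernel integral estimates of Lemma~\ref{lem:IR} to obtain a pointwise spatial $L^p$-bound of order $|x-y|^{(1+2\alpha)p/2}$, decompose the temporal increment $R^\theta(t,\cdot,\cdot)-R^\theta(s,\cdot,\cdot)$ into pieces controlled either by the \BDG inequality with Lemma~\ref{lem:CR} or pathwise through the stopping time $\tau_K^X$, interpolate via H\"older's inequality (as in \eqref{eq:R-Bou1}), and finish with two applications of the Garsia--Rodemich--Rumsey lemma. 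Your exponent bookkeeping, $\vartheta = \alpha(1-2\alpha)/(2(1+2\alpha)) - (6\alpha+2)/(p(1+2\alpha))$ emerging from $\beta = 2\alpha + 2/p$ and the second GRR loss of $2/p$, matches the paper exactly. One cosmetic difference: the paper applies \BDG directly to the full integrand $\theta(s,z)-\theta(t,x)$ and bounds it by the parabolic H\"older seminorm of $\theta$ in one stroke, rather than splitting into $\theta(s,z)-\theta(s,x)$ and $\theta(s,x)-\theta(t,x)$ as you do; your worry about adaptedness is not unreasonable, but observe that your split does not remove it (the second piece still involves $\theta(t,x)$), and the phrase ``treating the difference as a single object'' is precisely the unjustified step. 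In effect you have identified but not resolved a subtlety that the paper also passes over silently.

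The substantive gap, however, is elsewhere: you apply Lemma~\ref{lem:GRR} in the space variables to upgrade the pointwise $L^p$-bounds on $R^\theta(t,\cdot,\cdot)-R^\theta(s,\cdot,\cdot)$ to $\Omega C^{2\alpha}$-control, but you never verify the hypothesis \eqref{eq:GRR2} on $N(R^\theta(t,\cdot,\cdot)-R^\theta(s,\cdot,\cdot))$. Unlike the function $f = \Psi^\theta(t,\cdot)-\Psi^\theta(s,\cdot)$ in Proposition~\ref{prop:Reg-Stoch-Conv}, which is a function of one spatial variable so that one may take $C=0$ in \eqref{eq:GRR2}, here $R^\theta(t,\cdot,\cdot)$ is a genuine element of $\Omega C$ with
\begin{equation*}
NR^\theta(t,u,v,r) \;=\; \bigl(\theta(t,v)-\theta(t,u)\bigr)\bigl(X(t,r)-X(t,v)\bigr) \,\neq\, 0,
\end{equation*}
and without a quantitative bound on this quantity (and on its temporal increment), the GRR lemma simply does not apply. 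The paper controls $NR^\theta$ by combining the spatial H\"older seminorm of $\theta(t,\cdot)$ with the pathwise bound $|X(t,\cdot)|_\alpha \le K$ on $\{t\le\tau_K^X\}$, which incidentally is the place where the factor $1+K^p$ in \eqref{eq:bou-R} is genuinely forced, not merely in the temporal-increment terms $I_3,\ldots,I_5$ as your account suggests. Without the $NR^\theta$ estimate your proof as written does not close.
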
 

\begin{remark}
Similar to above \eqref{eq:bou-R} implies in particular that 
\begin{align}
\Ex  \bigg[   \big\| R^\theta \big\|_{C^0 \left( [0,\tau_K^{X}];\Omega C^{2\alpha}  \right)    }^p    \bigg] \,\leq \, C T^\vartheta  \big( 1+K^p  \big)  \| \theta \|_{p,\alpha}^{p} .     \label{eq:bou-RT}
\end{align}

\end{remark}

%
%
%
\begin{proof}
Similar to the proof of Proposition \ref{prop:Reg-Stoch-Conv} we begin by noting that for $0 \leq s < t \leq T$ and for any $\beta \in (0,\frac{1}{2}+\alpha)$,
\begin{align}
\Ex &\left[  \left(  \frac{R^\theta(t,x,y)-R^\theta(s,x,y)}{|x-y|^{\beta} }  \right)^p       \right]\, \notag\\
&\leq\, \Ex \left[  \left(  \frac{\big|R^\theta(t,x,y)\big| - \big|R^\theta(s,x,y) \big|}{|x-y|^{\frac{1}{2}+\alpha}}  \right)^p\right]^{\frac{2\beta}{1+2\alpha}}   \, \Ex \Big[    \big| R^\theta(t,x,y) - R^\theta(s,x,y) \big|^{p }     \Big] ^{ \big(1-\frac{2\beta}{1+2\alpha}\big)}.  \label{eq:R-Bou1}
\end{align}
Using the definition \eqref{eq:def-R}of $R^\theta(t,x,y)$ one obtains
\begin{align}
R^\theta(t,x,y) \, = \, \int_0^t \!\!  \int_\R \Big(\hat{p}_{t-s}(z-y) - \hat{p}_{t-s}(z-x)  \Big) \Big( \theta(s,z) - \theta(t,x) \Big) W(\mathrm{ds},\mathrm{dz}). \label{eq:rp1}
\end{align}
%
%
So by \BDG inequality we have
\begin{align}
\Ex &\bigg[ \Big(R^\theta(t,x,y) \Big)^{p}  \bigg] \notag\\
& \leq \, C \, \Ex \bigg[ \Big( \int_0^t \! \! \int_\R \Big(\hat{p}_{t-s}(y,z) - \hat{p}_{t-s}(x,z)  \Big)^2 \Big( \theta(s,z) - \theta(t,x) \Big)^2 \, \mathrm{dz} \, \mathrm{ds} \Big)^{p/2} \bigg]. \notag\\
 &\leq \, C\, \Ex \left[\sup_{z \neq x, s \neq t}\left( \frac{\theta(s,z) - \theta(t,x)}{|z|^\alpha + |s-t|^{\alpha/2}} \right)^{p}   \right]  \notag\\
 & \qquad \qquad \qquad   \Big( \int_0^t \!\! \int_\R \Big(\hat{p}_{t-s}(z-y) -\hat{p}_{t-s}(z-x)  \Big)^2 \Big( |z-x|^\alpha + |s-t|^{\alpha/2} \Big)^2  \, \mathrm{dz} \, \mathrm{ds} \Big)^{p/2} \notag\\
&\leq \, C \| \theta\|_{p,\alpha}^p  \, |x-y|^{(1+2\alpha)p/2}.
\label{eq:rp2} 
\end{align}
Here we have used \eqref{eq:bou-I1} and \eqref{eq:bou-I2} in the last line. 

%
%

Similarly, one can write using \eqref{eq:def-R}
\begin{align}
\Ex &\Big[    \big| R^\theta(t,x,y) - R^\theta(s,x,y) \big|^{p }     \Big] \notag \\
 &\leq  \, C \, \Ex \Big[   \big| \Psi^\theta(t,y)-  \Psi^\theta(s,y) \big|^p  \Big] + C \,\Ex \Big[     \big|   \Psi^\theta(s,x)  - \Psi^\theta(t,x) \big|^p \Big]  \notag\\
  & \quad + C\, \Ex \Big[   \big|  \theta(t,x) -\theta(s,x) \big|^p   \big| X(t,y)- X(t,x)\big|^p \Big]    \notag\\ 
  & \quad   + C\, \Ex \Big[  | \theta(s,x)|^p   \big| X(t,y) - X(s,y) \big|^p \Big]  + C\, \Ex \Big[  | \theta(s,x)|^p   \big| X(s,x) -   X(t,x) \big|^p  \Big] \notag\\
  &= C \big( I_1 +I_2 + I_3 + I_4 + I_5 \big). \label{eq:R-Bou2}
\end{align}
The terms $I_1$ and $I_2$ are bounded by $C \$ \theta \$_{p,0}^p (t-s)^{p/4}$. To bound $I_3$ we use the definition of $\tau_K^{\|X\|_\alpha}$
\begin{equation}
\Ex \Big[   \big|  \theta(t,x) -\theta(s,x) \big|^p   \big|\Psi(t,y)- \Psi(t,x)\big|^p \Big] \,  \leq \, K^p  \| \theta \|_{p,\alpha}^p (t-s)^{\alpha p/2}. \label{eq:R-Bou3}
\end{equation}
To bound $I_4$ and $I_5$ we write using the definition of $\tau_K^{\|X\|_\alpha}$ again 
\begin{equation}
\, \Ex \Big[  | \theta(s,x)|^p   \big| X(t,y) - X(s,y) \big|^p \Big]  \leq K^p \$ \theta \$_{p,0}^p (t-s)^{p \alpha/2}.\label{eq:R-Bou4}
\end{equation}
Thus summarising \eqref{eq:R-Bou1} - \eqref{eq:R-Bou4} we get 
\begin{equation}\label{eq:R-Bou5}
 \Ex \left[  \left(  \frac{R^\theta(t,x,y)-R^\theta(s,x,y)}{|x-y|^{\beta} }  \right)^p       \right] \,    \leq C  \$ \theta \$_{p,\alpha}^p \big(  K^p  |t-s|^{p\alpha/2}\big)^{ \big(1-\frac{2\beta}{1+2\alpha}\big)}.
\end{equation}

%
%

To be able to apply Lemma \ref{lem:GRR} we need similar bounds for 
\begin{equation}
NR^\theta(t,x,y,z)\,=\,  \big( \theta(t,y)-\theta(t,x) \big) \big(X(t,z)-X(t,y) \big). 
\end{equation}
Recall that the operator $N$ was defined in \eqref{eq:def-N}. Similar to the calculation in \eqref{eq:R-Bou1} observe that
\begin{align}
\Ex  \Bigg[&  \sup_{\substack{  0 \leq x \leq u <\\v<r \leq  y \leq 1}}  \frac{\big| NR^\theta(t,u,v,r) -NR^\theta(s,u,v,r)  \big|^p}{|x-y|^{\beta p}}  \Bigg] \notag\\ 
& \qquad \leq \, C\, \Ex \bigg[ \sup \, \frac{\big| NR^\theta(t,u,v,r)\big|^p + \big| NR^\theta(s,u,v,r)  \big|^p}{|x-y|^{2 \alpha p}}  \bigg]^{\frac{\beta}{2\alpha}} \notag\\
& \qquad \qquad \qquad \Ex \bigg[ \sup \,\big|NR^\theta(t,u,v,r) -  NR^\theta(s,u,v,r)  \big|^p  \bigg]^{1-\frac{ \beta}{2\alpha}}.
\end{align}
To bound the first expectation we calculate using the definition of $\tau_K^{X}$
\begin{align}
\Ex \bigg[ \sup \, \frac{\big|NR^\theta(t,u,v,r)  \big|^p}{|x-y|^{\alpha p}}  \bigg] \, \leq \, \Ex \big[ \big| \theta(t,\cdot) \big|_{\alpha}^p  \big| X(t, \cdot)  \big|_{\alpha}^p     \big] \, \leq \, C K^p \$ \theta\$_{p,\alpha}^p . 
\end{align}
As in   \eqref{eq:R-Bou2} - \eqref{eq:R-Bou4} one can see that
\begin{equation}
\Ex \bigg[ \sup \big| NR^\theta(t,u,v,r) -  NR^\theta(s,u,v,r)  \big|^p  \bigg] \, \leq \, C \$ \theta \$_{p,\alpha}^p |t-s|^{p \alpha/2}.
\end{equation}

%
%
%

Thus applying Lemma \ref{lem:GRR} one gets
\begin{equation}
\Ex\Big[ \big|R^\theta(t,\cdot,\cdot) - R^\theta(s,\cdot,\cdot) \big|_{\beta-\frac{2}{p}}  \Big] \, \leq  C (1+K_1^p) \$ \theta \$_{p,\alpha}^p |t-s|^{p \alpha/2(1-\frac{2\beta}{1+2\alpha})}.
\end{equation}
Applying Lemma  \ref{lem:GRR} once more in the time direction and setting $\beta=2\alpha +\frac{2}{p}$ one obtains the desired bound \eqref{eq:bou-R}.
\end{proof}


To finish this section, we show that the bound \eqref{eq:bou-R} is stable under approximations as well. To this end denote by 
\begin{equation}\label{eq:def-tau-eps}
\tau^{\| X\|_\alpha, \eps}_K = \inf  \bigg\{ t  \colon  \sup_{\substack{ x_1 \neq x_2 \\ 0 \leq s_1< s_2 \leq t}}  \frac{|( X_\eps(s_1,x_1)\big)  -  X_\eps(s_2,x_2)   \big|  }{|s_1-s_2|^{\alpha/2}  + |x_1-x_2|^{\alpha} }  >  K  \bigg\}.
\end{equation}
Furthermore, we define the $\eps$-remainder as 
\begin{gather}
R_\eps^\theta(t,x,y)=  \Psi_\eps^\theta(t,y)- \Psi_\eps^\theta(t,x) - \theta(t,x) \big(X_\eps(t,y)- X_\eps(t,x)\big). \label{eq:def-Reps}
\end{gather}

\begin{proposition}\label{prop:regrem}
Assume that  $|\theta |_0$ is a deterministic constant. Then for any $p$ as in \eqref{eq:condp2} and for any 
\begin{equation}\label{eq:cond-gamma2}
\gamma <  \frac{\alpha(1-2 \alpha)}{1 + 2 \alpha} - \frac{12 \alpha +4}{p (1+2 \alpha)}
\end{equation}
there exists a constant $C$ such that
\begin{equation}\label{eq:Stab-Reps}
\Ex  \bigg[   \big\| R^\theta  - R^\theta_\eps \big\|_{C^0 \left( [0,T \wedge \tau_K^{\| X\|_\alpha} \wedge \tau_K^{\| X\|_\alpha,\eps}];\Omega C^{2\alpha}  \right)    }^p    \bigg] \,\leq \, C  \big( 1+K^p  \big)  \| \theta \|_{p,\alpha}^{p} \eps^\gamma.     
\end{equation}
\end{proposition}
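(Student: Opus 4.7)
The strategy parallels the proof of Proposition~\ref{prop:Stoch-Conv-Rough-Path}, upgraded to track a small $\eps$-factor. Set $S_\eps(t,x,y) := R^\theta(t,x,y) - R^\theta_\eps(t,x,y)$ and note from \eqref{eq:def-R} and \eqref{eq:def-Reps} that
\[
S_\eps(t,x,y) = \bigl[(\Psi^\theta - \Psi^\theta_\eps)(t,y) - (\Psi^\theta - \Psi^\theta_\eps)(t,x)\bigr] - \theta(t,x)\bigl[(X-X_\eps)(t,y) - (X-X_\eps)(t,x)\bigr].
\]
The plan is to combine (a) a pointwise-in-$(x,y)$ estimate of $\Ex[|S_\eps|^p]$ that supplies the small $\eps$-factor, with (b) uniform-in-$\eps$ spatial- and time-regularity estimates of the type established in Proposition~\ref{prop:Stoch-Conv-Rough-Path}. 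These two families of bounds are then interpolated by H\"older's inequality, after which Lemma~\ref{lem:GRR} is applied twice (once in the spatial variables, once in time), just as in Proposition~\ref{prop:Stoch-Conv-Rough-Path}.

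For step (a), the triangle inequality together with the pointwise bound \eqref{eq:Hol-Bou-Psi8} (applied both to $\Psi^\theta - \Psi^\theta_\eps$ and, for $\theta\equiv 1$, to $X - X_\eps$) and the fact that $|\theta|_0$ is deterministic yield
\[
\Ex\bigl[|S_\eps(t,x,y)|^p\bigr] \le C\,\$\theta\$_{p,\alpha}^p\,\eps^{\alpha p}
\]
uniformly in $t\in[0,T]$ and $x,y\in[0,1]$, together with an analogous $\eps^{\alpha p}$ bound for $|NS_\eps(t,u,v,r)|^p$ obtained from the identity $NS_\eps(t,u,v,r) = (\theta(t,v)-\theta(t,u))\bigl((X-X_\eps)(t,r) - (X-X_\eps)(t,v)\bigr)$ combined again with \eqref{eq:Hol-Bou-Psi8}. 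The same procedure also furnishes an $\eps^{\alpha p}$ bound for $|S_\eps(t,x,y) - S_\eps(s,x,y)|^p$, uniform in $(s,t,x,y)$.

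For step (b), observe that every estimate in the proof of Proposition~\ref{prop:Stoch-Conv-Rough-Path} rests only on the Burkholder-Davies-Gundy inequality combined with the heat-kernel bounds of Lemma~\ref{lem:IR}. Replacing $\hat p_{t-s}$ throughout by $\hat p_{t-s}\ast\eta_\eps$ leaves these estimates essentially unchanged: one application of Young's inequality, together with the Gaussian-plus-remainder decomposition $\hat p_t = p_t + p_t^R$ used in the proof of Lemma~\ref{lem:IR}, produces the same bounds with constants independent of $\eps$. Consequently, on the stopping interval $[0,\tau^X_K \wedge \tau^{X,\eps}_K]$, $R^\theta_\eps$ satisfies the same pointwise spatial bound \eqref{eq:rp2}, the same time-increment bounds \eqref{eq:R-Bou2}--\eqref{eq:R-Bou5}, and the analogous $N$-increment bound as $R^\theta$, all with a constant of order $(1+K^p)\,\$\theta\$_{p,\alpha}^p$. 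The triangle inequality then transfers these bounds to $S_\eps$.

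Combining (a) and (b) by H\"older's inequality, for any $\mu\in[0,1]$,
\[
\Ex\bigl[|S_\eps(t,x,y)|^p\bigr] \le C(1+K^p)\,\$\theta\$_{p,\alpha}^p\,\eps^{\alpha\mu p}\,|x-y|^{(1-\mu)(1+2\alpha)p/2},
\]
and analogously for $NS_\eps$ and for the time-increments of $S_\eps$. Feeding these interpolated bounds into Lemma~\ref{lem:GRR}, first in the two spatial variables with $\beta = 2\alpha + 2/p$, produces an estimate on $|S_\eps(t,\cdot,\cdot)|_{2\alpha}^p$ with an explicit power of $\eps$; a second application of Lemma~\ref{lem:GRR} in time, using the interpolated time-increment bound, upgrades this pointwise-in-$t$ estimate to the supremum-in-$t$ bound \eqref{eq:Stab-Reps}. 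The main obstacle is purely arithmetic: the two interpolation parameters must be chosen so as to balance the $\eps$-gain against the $2/p$ losses incurred at each application of Garsia-Rodemich-Rumsey, in such a way that the resulting exponent of $\eps$ covers any $\gamma$ satisfying \eqref{eq:cond-gamma2}. No new probabilistic input beyond what is already established is required.
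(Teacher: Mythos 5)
Your proposal follows essentially the same route as the paper's own proof: obtain a pointwise $L^p$-estimate of order $\eps^{\alpha p}$ from \eqref{eq:Hol-Bou-Psi8} (using that $|\theta|_0$ is deterministic), note that the uniform-in-$\eps$ regularity bounds of Proposition~\ref{prop:Stoch-Conv-Rough-Path} carry over to $R^\theta_\eps$ via Young's inequality, interpolate via H\"older to distribute the gains across $|x-y|$, $|t-s|$ and $\eps$, and finish with two applications of the Garsia--Rodemich--Rumsey Lemma~\ref{lem:GRR} (once in space, once in time). The paper's proof is terser but uses exactly this chain, so your reconstruction is correct.
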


%
%

\begin{proof}
Due to the deterministic bound on $|\theta|_0$ we get using \eqref{eq:Hol-Bou-Psi8} as well as the definition of $R_\eps^\theta$ that
\begin{equation}
\Ex  \big[   \big| R^\theta(t,x,y)  - R^\theta_\eps(t,x,y) \big| \big] \, \leq \, C \$ \theta \$_{p,\alpha}^p \eps^{\alpha p}.
\end{equation}
Then in the same way as in \eqref{eq:reg7} we get for $h_\eps= R_\eps^\theta - R^\theta$ that 
\begin{equation}
 \Ex \left[  \left(  \frac{h_\eps(t,x,y)-h_\eps(s,x,y)}{|x-y|^{\beta_1} }  \right)^p       \right] \,    \leq C  \$ \theta \$_{p,\alpha}^p  (1 + K^p)  \big(   |t-s|^{p\alpha/2}\big)^{ \big(1-\frac{2\beta}{1+2\alpha}\big) - \beta_2} \eps^{\alpha \beta_2}.
\end{equation}
Here we have used again that by Young's inequality the bounds on the space-time regularity of $R$ hold for $R_\eps$ as well with constants that are uniform in $\eps$. The bounds on $N h_\eps$ are derived in the same way as in the proof of \eqref{prop:Stoch-Conv-Rough-Path}. Then the proof is again finished by the Garsia-Rodemich-Rumsey Lemma \ref{lem:GRR}.
\end{proof}

\section{Existence of Solutions}
\label{sec:Solutions}

In this section we prove Theorem \ref{thm:MR} and Theorem \ref{thm:Stability}. As the argument is quite long and technical we first give an outline of the proof.
 
We will construct mild solutions to the equation \eqref{eq:Stoch-Burg} i.e. solutions of the equation
\begin{equation}\label{eq:Burg-mild}
u(t) \,= \, S(t) \, u_0 + \int_0^t S(t-s) \, \theta(u(s)) \, \d W(s) + \int_0^t \! S(t-s)  \, g\big(u(s) \big) \, \partial_x u(s) \,\mathrm{d}s,  
\end{equation}
where as above $S(t)$ denotes the heat semigroup which is given by convolution  with the heat kernel $\hat{p}_t(x)$.  Note that the two integral terms in \eqref{eq:Burg-mild} are of a very different nature. The first one $\int_0^t S(t-s) \,  \theta(u(s))\, \mathrm{d}W(s)$ is a stochastic integral in time whereas the second term is a usual Lebesgue integral in time and a rough integral against the heat kernel in space:
\begin{equation}
S(t-s) \, g(u(s)) \partial_x u(s)(x)\,=\, \int_0^1 \! \hat{p}_{t-s}(x-y) \, g(u(s,y))\, \mathrm{d}_y u(s,y).
\end{equation}
These  terms have different natural spaces for solving a fixed point argument. It thus seems advisable to separate the construction into two parts. 

In the additive noise case \cite{Ha10} $\theta=1$ this can be done using the following trick. As it has been observed in several similar cases (see e.g. \cite{DPD03}) subtracting the solution $X$ to the linearised equation \eqref{eq:SHE} regularises the solution. Actually, we expect $v\,=\, u-X -U$ to be a $C^1$ function of the space variable $x$. Here for technical convenience we have also removed the term involving the initial condition by subtracting the solution $U(t)= S(t)u_0$ to the linear heat equation with the given initial data. Then for fixed $\big( X,\mathbf{X}\big)$ one can obtain $v$ by solving the deterministic fixed point problem
\begin{align}
v(t)\,=\, &\int_0^t S(t-s) \,  \Big[ g\big( v(s)+X(s) +U(s)  \big) \partial_x \big( v(s) + U(s) \big) \Big] \, \mathrm{d}s \notag\\
&\qquad \qquad + \int_0^t  \int_0^1 \hat{p}_{t-s}(\cdot - y) g\big( v(s,y)+ X(s,y) + U(s) \big)\,  \mathrm{d}_y X(s,y)  \, \mathrm{d}s  \label{eq:FPv}
\end{align}
in $C\big([0,T], C^1[0,1]\big)$. Then by adding $X$ and $U$ to the $v$ one can recover the solution $u$. The crucial ingredient for this fixed point problem is the regularisation of the heat semigroup. Lemma \ref{lem:scaling} gives a way to express this property in the rough path context.

 In the multiplicative noise case some extra care is necessary. For a general adapted process $\theta$, the stochastic convolution $\Psi^\theta$ is not Gaussian and thus Lemma \ref{lem:gau-rp} can not be applied to get a canonical rough path lift.  But we have seen in Proposition \ref{prop:Stoch-Conv-Rough-Path} that it can be viewed as an $X$ controlled rough path.

Given a fixed controlled rough path valued process $ \Psi$ we can again perform a pathwise construction to obtain $v^{\Psi}$ for this particular rough path valued function in the same manner  as before by solving the fixed point equation \eqref{eq:FPv}  with $X$ replaced by $\Psi$. Furthermore, $v^{\Psi}$ depends continuously on $\Psi$ (and on $(X,\mathbf{X})$). To be more precise, we will prove in Proposition \ref{prop:cont-dep-psi} that the  $C([0,T],C^1)$ and the $C^{1/2}([0,T],C)$ norms depend continuously on the controlled rough path norm of $\Psi$.

 This can then finally be used to construct $u$ as a solution to another fixed point problem in a space of adapted stochastic processes possessing the right space-time regularity. We solve the fixed point problem 
 \begin{equation*}
u \mapsto U + v^{\theta(u)} + \Psi^{\theta(u)}
\end{equation*}
with finite norm $\$u\$_{p,\alpha}$ (recall that the space-time H\"older norm $\$ \cdot \$_{p,\alpha}$ was defined in \eqref{eq:def-nom}). Here it is crucial to assume regularity for the process $u$ as the bounds that control the rough path norms of   $\Psi^{\theta(u)}$ depend on the regularity of $\theta(u)$.  This is where we need that space-time norms of $v$ depend continuously on the rough path norms of $\Psi^\theta$.

In order to perform this fixed point argument we need to introduce several cutoffs. These will then finally be removed in the last step of the proof by deriving some suitable a priori bounds. 

The entire construction is continuous in all the data. Using this and  the stability results derived in Section  \ref{sec:Prel-Calc} we finally prove Theorem \ref{thm:Stability}.

We start by analysing the  deterministic ``inner" fixed point argument. Fix functions $ X(t,x) \in C^\alpha_T\,= \, C([0,T], C^\alpha) $ and  $\mathbf{X}(t,x,y) \big) \in \Omega C^{2 \alpha}_T \, = \, C([0,T], \Omega C^{2 \alpha})$. We assume that for every fixed time $t$ the  pair $\big(X(t,\cdot), \mathbf{X}(t, \cdot) \big)$ is a rough path in space i.e. we assume that \eqref{eq:cond-it-int} holds. This reference rough path will remain fixed throughout the first (deterministic) part of the construction.  We will use the notation
\begin{equation}\label{eq:def -st-rpn}
\big\| (X, \mathbf{X})\big \|_{\mathcal{D}^{\alpha}_T} \,= \, \sup_{0 \leq t \leq T} \big| \big(X(t, \cdot), \mathbf{X}(t, \cdot)\big) \big|_{\mathcal{D}^{\alpha}}.  
\end{equation}
Note that for the moment we will pretend that $(X,\XX)$ are deterministic. Later on, of course, $X$ will be the Gaussian rough path constructed in Proposition \ref{prop:Gau-conv-rp} and the estimates will only hold true outside a universal set of  zero measure.

Furthermore, we fix a function $\Psi \in \CT$. We assume that for every $t$ the function $\Psi(t, \cdot)$ is an $X$-controlled rough path i.e. that there are functions ${\Psi}' \in \CT$ and $R_\Psi \in \Omega C_T^{2 \alpha}$  such that for every $t$ \eqref{eq:contr-rp} holds. We write 
\begin{equation}\label{eq:def-sp-crpn}
\| \Psi   \|_{\mathcal{C}_{X,T}^\alpha} \,= \,  \sup_{0 \leq t \leq T} | \Psi(t, \cdot) |_{\mathcal{C}_{X(t)}^\alpha}. 
\end{equation}

Finally, we fix the initial data $u_0 \in C^\beta$ for $\frac{1}{3} < \alpha < \beta < \frac{1}{2}$. For $t >0$ write $U(t,\cdot)= S(t) u_0$. Note that by standard regularisation properties of the heat semigroup 
\begin{equation}\label{eq:reg-Ut}
|U(t,\cdot) |_{C^{2\alpha}} \,\leq \,  C t^{\frac{\beta-2\alpha}{2}}|u_0|_{\beta} \qquad \text{and} \qquad |U(t,\cdot) |_{C^1} \,\leq \,  C t^{\frac{\beta-1}{2}}|u_0|_{\beta}.
\end{equation} 
For $v \in C^1_T= C([0,T],C^1)$ define the operator $G_{T}= G_{T, \Psi,u_0}$ as
\begin{align}
G_{T}(v)(t,x) \,=\,&  \int_0^t S(t-s)  \Big[  g\big(u(s) \big)  \partial_x \big( v(s) + U(s) \big) \Big]  \, \d s \,(x) \notag\\
&\qquad \qquad +\int_0^t \int_0^{1} p_{t-s} (x-y) g(u(y,s)) \, \mathrm{d}_y \Psi(y,s)     \, \d s. \label{eq:Def-G}
\end{align} 
Here to abreviate the notation we write $u(t,x) = U(t,x) + \Psi(t,x) +v(t,x)$. The spatial integral in the second line of \eqref{eq:Def-G} is to be interpreted as a rough integral: For every $s$ the path $\Psi(s.\cdot)$ is in $\mathcal{C}_{X(s)}^\alpha$ and so is $g\big(u(s)\big) \,=\, g \big(v(s)+U(s)+\Psi(s) \big)$ (see Lemma \ref{lem:comp-reg-func}). Thus the integral can be defined as in Lemma \ref{lem:Gub-int}. Note that as the spatial integral is the limit of the approximations \eqref{eq:Riem-sum2} they are in particular measurable in $s$ and the temporal integral can be defined as a usual Lebesgue integral.

%
%
%

The next lemma is a modification of Proposition 2.5 and Lemma 3.9 in \cite{Ha10}. It is the crucial ingredient in proving the regularising properties of the convolution with the heat kernel if understood in the rough path sense.

\begin{lemma}\label{lem:scaling}
Let $(X,\XX)$  be an $\alpha$ rough path and $Y,Z  \in \mathcal{C}_X^{\alpha}$. Furthermore, assume that $f:\R \to \R$ is a $C^1$ function such that 
\begin{equation}
|f|_{1,1} \,=\, \sum_{k \in \Z}  \sup_{x \in [k, k+1]} |f(x)| + |f'(x)| 
\end{equation} 
is finite. Then for any $\lambda_0 >0$ there exists a $C>0$ such that the following bound holds for every $\lambda \geq \lambda_0$ 
\begin{align}
\Big|  \int_0^1 f(\lambda x) & Y(x) \, \mathrm{d}Z(x) \Big|  \leq\, C |f|_{1,1} \lambda^{-\alpha} 
 \bigg[  |Y|_{0} |Z|_\alpha  
 + |R_Y|_{2\alpha} |Z|_\alpha   
  +  |X|_\alpha |Y'|_0 |R_Z|_{2\alpha} \notag\\
&   +  |\XX|_{2\alpha} \Big(    |Y'|_{0} |Z'|_{\alpha} +  |Y'|_{C^\alpha} |Z'|_{0} \Big)    + |X|_{\alpha}^2 |Y'|_0 |Z|_\alpha   \bigg] .\label{eq:sca1}
\end{align}
\end{lemma}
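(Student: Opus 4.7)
The strategy is to partition $[0,1]$ into $N \sim \lambda$ intervals of length at most $\lambda^{-1}$ on which $f(\lambda \cdot)$ varies only by an $O(1)$ amount, apply the local rough integral expansion from Lemma~\ref{lem:Gub-int} on each piece, and sum, using the summability implicit in $|f|_{1,1}$.

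Concretely, set $N = \lceil \lambda \rceil$ and $a_k = k/N$, $I_k = [a_k, a_{k+1}]$ for $k = 0, \ldots, N-1$. On $I_k$, the rescaled argument $\lambda x$ lies in an interval of length at most $1$ inside $[k,k+1]$, so $\sup_{I_k}|f(\lambda \cdot)| \le f_k := \sup_{[k,k+1]}|f|$ and $\sup_{I_k}|(f(\lambda \cdot))'| \le \lambda f'_k$ where $f'_k := \sup_{[k,k+1]}|f'|$; crucially, $\sum_k (f_k + f'_k) \le |f|_{1,1}$. The product $\tilde Y(x) = f(\lambda x) Y(x)$ is itself an $X$-controlled rough path, with derivative $\tilde Y' = f(\lambda \cdot) Y'$ and remainder
\begin{equation*}
R_{\tilde Y}(x,y) = f(\lambda x) R_Y(x,y) + Y(y)\bigl(f(\lambda y) - f(\lambda x)\bigr).
\end{equation*}

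Applying \eqref{eq:IB1}--\eqref{eq:IB2} to $\int_{I_k} \tilde Y \, dZ$ splits the piece into a leading term $\tilde Y(a_k)\,\delta Z(a_k,a_{k+1})$, a second-order term $\tilde Y'(a_k)\,\XX(a_k,a_{k+1}) Z'(a_k)^T$, and a remainder $Q_k$ with $|Q_k| \le C \lambda^{-3\alpha}|Q|_{3\alpha, I_k}$. Summing the leading term and using $|\delta Z(a_k,a_{k+1})| \le |Z|_\alpha \lambda^{-\alpha}$ gives $|f|_{1,1}|Y|_0|Z|_\alpha \lambda^{-\alpha}$, i.e.\ the first term of \eqref{eq:sca1}. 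The second-order sum is bounded by $|f|_{1,1}|Y'|_0|Z'|_0 |\XX|_{2\alpha} \lambda^{-2\alpha}$; since $\lambda \ge \lambda_0$, this is $\le \lambda_0^{-\alpha}|f|_{1,1}|Y'|_{C^\alpha}|Z'|_0 |\XX|_{2\alpha} \lambda^{-\alpha}$ and is absorbed into the $|\XX|_{2\alpha}|Y'|_{C^\alpha}|Z'|_0$ summand in \eqref{eq:sca1}. The four contributions of $|Q|_{3\alpha, I_k}$ from \eqref{eq:IB2} are each treated by replacing the global norms of $\tilde Y$ there by the local-on-$I_k$ estimates $|\tilde Y'|_{0,I_k} \le f_k |Y'|_0$, $|\tilde Y'|_{\alpha,I_k} \le \lambda^\alpha f'_k |Y'|_0 + f_k |Y'|_\alpha$, and $|R_{\tilde Y}|_{2\alpha,I_k} \le f_k |R_Y|_{2\alpha} + |Y|_0 f'_k \lambda^{2\alpha}$; multiplying by $\lambda^{-3\alpha}$ and summing in $k$ via $\sum_k(f_k+f'_k) \le |f|_{1,1}$ reproduces, one after the other, the remaining terms on the right-hand side of \eqref{eq:sca1}.

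The main obstacle is the bookkeeping in this last step: the local norms of $\tilde Y = f(\lambda \cdot) Y$ carry bad factors of $\lambda^\alpha$ (even $\lambda^{2\alpha}$ in the piece $Y(y)(f(\lambda y)-f(\lambda x))$ of $R_{\tilde Y}$), arising from the intrinsic roughness introduced by $f(\lambda \cdot)$, but these are always compensated by the factor $|I_k|^{3\alpha} = \lambda^{-3\alpha}$. The delicate point is to verify that every cross-term produced in this way, once summed, involves only the norms $|Y|_0$, $|Y'|_0$, $|Y'|_\alpha$, $|R_Y|_{2\alpha}$ of $Y$ (and their analogues for $Z$) that actually appear in \eqref{eq:sca1}, with no stray $|Y'|_0|Z'|_0$ term remaining unabsorbed; this works because the $|\XX|_{2\alpha}$-piece of \eqref{eq:sca1} carries $|Y'|_{C^\alpha}$ rather than $|Y'|_\alpha$ alone.
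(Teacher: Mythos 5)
Your proof is correct and follows essentially the same route as the paper: partition $[0,1]$ into $\sim\lambda$ intervals of length $\sim\lambda^{-1}$, apply the controlled-rough-integral expansion \eqref{eq:IB1}--\eqref{eq:IB2} to the product $f(\lambda\cdot)Y$ on each piece, and sum using $\sum_k(f_k+f'_k)\le|f|_{1,1}$; the paper phrases this by rescaling each piece back to $[0,1]$ (and taking $\lambda\in\N$ without loss of generality) whereas you track local norms on $I_k$ directly, but the two are equivalent. One small imprecision: with $N=\lceil\lambda\rceil$ the image $\lambda I_k$ need not be contained in $[k,k+1]$ (it can overlap $[k-1,k]$), so the local sup of $f(\lambda\cdot)$ and $\lambda^{-1}(f(\lambda\cdot))'$ should be bounded by $f_{k-1}+f_k$ and $f'_{k-1}+f'_k$ rather than by $f_k$ and $f'_k$; this only changes the constant after summing, or one can follow the paper and reduce to integer $\lambda$.
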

%
%
\begin{remark}
In most of the sequel we will not need the detailed version \eqref{eq:sca1} but instead work with the simplified bound
\begin{equation}
\Big|\int_0^1 f(\lambda x) Y(x) \, \mathrm{d}Z(x) \Big|  \leq C|f|_{1,1} \lambda^{-\alpha}\big(1+ |(X,\mathbf{X})|_{\mathcal{D}^\alpha}\big) 
 |Y|_{C^{\alpha}_X} |Z|_{C^{\alpha}_X} .\label{eq:sca3}
\end{equation}
Note however, that to derive the a priori bounds to prove global existence we will need to use that in \eqref{eq:sca1} we do not have any term involving the product  $\big| R_Y \big|_{2\alpha} \big| R_Z \big|_{2 \alpha}$ or the product $|Y|_\alpha |Z|_{\alpha}$ . 
\end{remark}
%
%

%

\begin{remark} \label{rem:sca3}
Using the bound \eqref{eq:IB4} in the proof one can see that the same scaling behaviour holds true for the difference of integrals with respect to different rough paths. Assume that  $(\tilde{X},\widetilde{\mathbb{X}})$  is  another $\alpha$ rough path and $\tilde{Y},\tilde{Z}  \in \mathcal{C}_{\tilde{X}}^{\alpha}$. Then one has
\begin{align}
\Big|\int_0^1 f(\lambda x) & Y(x) \, \mathrm{d}Z(x)   -  \int_0^1 f(\lambda x) \tilde{Y}(x) \, \mathrm{d}\tilde{Z}(x)    \Big|  \notag \\
&\leq C|f|_{1,1} \lambda^{-\alpha} \bigg[ \big( |Y|_{\CaX} + |\tilde{Y}|_{\mathcal{C}^\alpha_{\tilde{X}}}     \big)  \big( |Z|_{\CaX} + |\tilde{Z}|_{\mathcal{C}^\alpha_{\tilde{X}}}    \big) \big( |X-\tilde{X} |_{C^\alpha} +  |\XX-\tilde{\XX} |_{2 \alpha} \big)     \notag\\
 & + C  \big( |Z|_{\CaX} + |\tilde{Z}|_{\mathcal{C}^\alpha_{\bar{X}}}    \big)  \big( 1+ \big| \big( X,\XX \big) |_{\Da} + \big| \big(\tilde{ X},\tilde{\XX} \big) |_{\Da}  \big) \notag\\
 & \qquad \qquad  \times \big( |Y- \tilde{Y}|_{C^\alpha} +   |Y'- \tilde{Y'}|_{C^\alpha}+  |R_Y - R_{\tilde{Y}}|_{2 \alpha}   \big) \notag\\
 &+ C   \big( 1+ \big| \big( X,\XX \big) |_{\Da} + \big| \big(\tilde{ X},\tilde{\XX} \big) |_{\Da}  \big)  \big( |Y|_{\CaX} + |\tilde{Y}|_{\mathcal{C}^\alpha_{\tilde{X}}}    \big) \notag\\
 & \qquad \qquad  \times \big( |Z- \tilde{Z}|_{C^\alpha} +   |Z'- \tilde{Z'}|_{C^\alpha}+  |R_Z - R_{\tilde{Z}}|_{2 \alpha}   \big) \bigg]. \label{eq:sca-bou-drp}
\end{align}
\end{remark}

%
%
%

\begin{remark} \label{rem:sca2}
We will only apply Lemma \ref{lem:scaling} in the case when $f(x)$ is the heat kernel $\hat{p}_t$ or its derivative. Actually using the expression \eqref{eq:RePr} it is easy to see that for every $t \in [0,1]$ there are functions $f_t$ and $g_t$ such that $\sup_{t \in [0,1]} |f_t|_{1,1} + |g_t|_{1,1} < \infty$ and such that 
$\hat{p}_t(x)= \frac{1}{\sqrt{t}}f_t \big( x / \sqrt{t}\big)$ and  $\partial_x \hat{p}_t(x)= \frac{1}{t} g_t\big( x / \sqrt{t}\big)$ . Then applying \eqref{eq:sca3} for $\lambda = (t-s)^{- 1/2}$ one gets   
\begin{gather}
\Big| \int_0^{1} \partial_x \hat{p}_t (x,y) Z(x) \mathrm{d} X(x) \Big| \, \leq \, C t^{\alpha/2 -1}   \big(1 + |(X,\mathbf{X})|_{\mathcal{D}^\alpha} \big)  | Y |_{\mathcal{C}^\alpha_X}  | Z |_{\mathcal{C}^\alpha_X} . \label{eq:hk-bou2}
\end{gather}
and a similar bound for $\hat{p}$ instead  of  $\partial_x \hat{p}_t$ with scaling $t^{\alpha/2-1}$ instead of $t^{\alpha/2-1/2}$ . 
\end{remark}
%
%
\begin{proof}(of Lemma \ref{lem:scaling})
Without loss of generality we can assume $\lambda \in \N$. Then we can write 
\begin{equation}\label{eq:sca-bou1}
\int_0^1 f(\lambda x) \, Y(x) \, \mathrm{d}Z(x) \,= \, \sum_{k=1}^\lambda \int_0^1 f( x+k) \, Y_{\lambda,k}(x)\, \mathrm{d}Z_{\lambda,k}(x),
\end{equation}
where $Y_{\lambda.k}(x) = Y\big((x+k)/\lambda \big)$ and similarly $Z_{\lambda.k}(x) = Z\big((x+k)/\lambda \big)$. The integrals on the right hand side have to be understood as rough integrals with respcect to the reference rough path $X_{\lambda,k}(x)=X\big((x-k)/\lambda\big)$ and $\mathbf{X}_{\lambda,k}(x,y) = \mathbf{X}_{\lambda,k}\big( (x-k)/\lambda, (y-k)/\lambda \big)$. Then according to \eqref{eq:IB1} and \eqref{eq:IB2}   the integrals on the right hand side of \eqref{eq:sca-bou1} are given as
\begin{align}
&\int_0^1 f( x+k) \, Y_{\lambda,k}(x)\, \mathrm{d}Z_{\lambda,k}(x)\notag \\
 &\, = \, f(k) Y \bigg( \frac{k}{\lambda} \bigg) \delta Z \bigg( \frac{k}{\lambda},\frac{k+1}{\lambda} \bigg) + f(k)Y' \bigg( \frac{k}{\lambda} \bigg)   \mathbf{X} \bigg( \frac{k}{\lambda},\frac{k+1}{\lambda} \bigg)Z'\bigg( \frac{k}{\lambda} \bigg)^T  + Q_{\lambda,k}\label{eq:sca-bou2},
\end{align} 
where 
\begin{align}
|Q_{\lambda,k}|   \, \leq \, C \alpha_k \lambda^{-3\alpha}  \bigg[ |Y|_0 & |Z|_\alpha + |R_Y|_{2\alpha} |Z|_\alpha  +  |X|_\alpha |Y'|_0 |R_Z|_{2\alpha}  + \notag\\
& |\XX|_{2\alpha}\big(  |Y'|_{0} |Z'|_{\alpha }  +    |Y'|_{C^\alpha} |Z'|_{0 }    \big)   + |X|_{\alpha}^2 |Y'|_0 |Z|_\alpha   \bigg] \label{eq:sca-bou3}. 
\end{align}
Here we have set $\alpha_k = \sup_{x \in [k, k+1]} |f(x)| + |f'(x)| $. The first two terms on the right hand side of \eqref{eq:sca-bou2} can be bounded by
\begin{equation}\label{eq:sca-bou4}
\alpha_k \Big(\lambda^{-\alpha} |Y|_{0} |Z|_\alpha + \lambda^{-2\alpha} |Y'|_0  |Z'|_0   |\mathbf{X}|_{2 \alpha}  \Big).
\end{equation}
Thus summing over $k$ and recalling we obtain \eqref{eq:sca1}. 
\end{proof}

%
%
%
%
We need the following property:

\begin{lemma}\label{lem:comp-reg-func}
Let $(X,\mathbf{X}) \in \mathcal{D}^\alpha$ be a rough path and $\Psi \in \mathcal{C}^\alpha_X$. Furthermore, assume that $w$ is a $C^{2 \alpha}$ path and $g$ is a $C^3$ function with bounded derivatives up to order $3$. Then 
\begin{equation}
Y=g(\Psi +w)
\end{equation}
is a controlled rough path with derivative process $Y' = Dg(\Psi +w) \Psi'$. Furthermore, we have the following bounds
\begin{align}
|Y|_{C^\alpha} \, \leq\,& |g|_0 + |Dg|_0 \big( |\Psi|_\alpha + |w|_\alpha  \big)
\notag\\
|Y'|_{C^\alpha} \, \leq\,& |Dg|_0 |\Psi'|_{C^\alpha} + |D^2 g|_0  | \Psi|_{0} \big( | \Psi|_{\alpha} + |w|_{\alpha}  \big)\notag\\
|R_Y|_{2\alpha} \, \leq \, &  |Dg|_0 |w|_{2\alpha} + |D^2 g|_0 |\Psi|_{\alpha}^2 + |Dg|_0 |R_{\Psi} |_{2\alpha}. \label{eq:comp-reg-func1}
\end{align}
Furthermore let  $(\bar{X}, \bar{\XX})$ be another rough path, $\bar{\Psi} \in \mathcal{C}_{\bar{X}}^\alpha$,  and $\bar{w} \in C^{2\alpha}$ and write $\bar{Y} = g(\bar{\Psi} +\bar{w})$. Then we have the following bounds
\begin{align}
|Y-\bar{Y}|_{C^\alpha} \, \leq \, &   |g|_{C^2} \Big(1+ |\Psi |_\alpha 
 + |\bar{\Psi} |_\alpha  + |w|_{\alpha}  + |\bar{w}|_\alpha  \Big) \Big( |\Psi - \bar{\Psi}|_{C^\alpha} + |w - \bar{w}|_{C^\alpha}  \Big) \notag\\
|Y'-\bar{Y}'|_{C^\alpha} \, \leq \, &C |g|_{C^3} \Big(1+ |\Psi |_{C^\alpha} 
 + |\bar{\Psi} |_{C^\alpha}  + |w|_{C^\alpha}  + |\bar{w}|_{C^\alpha}  \Big) \notag\\
 & \quad   \Big[ \big( |\Psi'- \bar{\Psi}|_{C^\alpha}  \big) + \big( |\Psi'|_{C^\alpha} + |\bar{\Psi}|_{C^\alpha} \big) \Big( |\Psi - \bar{\Psi}|_{C^\alpha} + |w - \bar{w}|_{C^\alpha}  \Big)  \Big]    \notag\\
|R_Y - R_{\bar{Y}}|_{2 \alpha} \, \leq \, & C |g|_{C^3} \Big[ |w - w|_{2\alpha} + \big| R_{\Psi}- R_{\bar{\Psi}}\big|_{2 \alpha}   \label{eq:comp-reg-func2a}
\\
&  + \Big( 1 + |\Psi|_{C^\alpha}^2 + |\bar{\Psi}|_{C^\alpha}^2    + |w|_{C^{2\alpha}} +  |\bar{w}|_{C^{2\alpha}  }  \Big) \, \big( |\Psi - \bar{\Psi}|_{C^\alpha} +  |w - \bar{w}|_{C^\alpha}  \big)    \Big]. \notag
\end{align}

\end{lemma}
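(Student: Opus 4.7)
The plan is to derive every assertion from a second-order Taylor expansion of $g$ applied to $\Psi+w$, combined with the controlled-rough-path decomposition $\delta\Psi = \Psi'\,\delta X + R_\Psi$. The stability bounds follow from the same Taylor identity applied to both $(\Psi,w)$ and $(\bar\Psi,\bar w)$, together with a telescoping argument on their differences.

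For the bounds in \eqref{eq:comp-reg-func1}, I Taylor-expand
\[
\delta Y(x,y) \,=\, Dg\bigl(\Psi(x)+w(x)\bigr)\bigl[\delta\Psi(x,y)+\delta w(x,y)\bigr] + \tfrac12 D^2 g(\xi)\bigl[\delta\Psi+\delta w\bigr]^{\otimes 2},
\]
for some intermediate point $\xi$, and substitute $\delta\Psi = \Psi'\,\delta X + R_\Psi$. Comparing with $\delta Y = Y'\,\delta X + R_Y$ identifies $Y' = Dg(\Psi+w)\,\Psi'$ and
\[
R_Y(x,y) \,=\, Dg\bigl(\Psi(x)+w(x)\bigr)\bigl[R_\Psi(x,y)+\delta w(x,y)\bigr] + \tfrac12 D^2 g(\xi)\bigl[\delta\Psi+\delta w\bigr]^{\otimes 2}.
\]
The bound on $|Y|_{C^\alpha}$ is immediate from $|g|_0$ and the Lipschitz property of $g$; the bound on $|Y'|_{C^\alpha}$ is obtained by applying the chain rule to $Y' = Dg(\Psi+w)\,\Psi'$ (one factor of $D^2 g$ appears through $\delta\bigl[Dg(\Psi+w)\bigr]$); and the bound on $|R_Y|_{2\alpha}$ follows term-by-term from the explicit formula above, using $|\delta w(x,y)| \le |w|_{2\alpha}|x-y|^{2\alpha}$ and $(|\Psi|_\alpha + |w|_\alpha)^2 \le C(|\Psi|_\alpha^2 + |w|_{2\alpha})$.

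For the stability bounds \eqref{eq:comp-reg-func2a}, I apply the same Taylor identity to both $Y$ and $\bar Y$, form the difference, and repeatedly use the telescoping $AB-\bar A\bar B = (A-\bar A)B + \bar A(B-\bar B)$ to isolate exactly one difference factor per term. The bound on $|Y-\bar Y|_{C^\alpha}$ follows from
\[
g(a)-g(b) \,=\, \int_0^1 Dg\bigl(b+s(a-b)\bigr)\,ds \cdot (a-b),
\]
and therefore uses only $g \in C^2$; the bound on $|Y'-\bar Y'|_{C^\alpha}$ requires $g \in C^3$ because one must control the $C^\alpha$ seminorm of $Dg(\Psi+w)-Dg(\bar\Psi+\bar w)$; and the bound on $|R_Y-R_{\bar Y}|_{2\alpha}$ splits into a linear part (Lipschitz in $R_\Psi$ and $\delta w$) and a quadratic part, for which one uses $A^{\otimes 2}-\bar A^{\otimes 2} = (A-\bar A)\otimes A + \bar A\otimes(A-\bar A)$ applied to $A = \delta\Psi+\delta w$.

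The main obstacle is the book-keeping for $|R_Y-R_{\bar Y}|_{2\alpha}$: after expanding the explicit formula for $R_Y$ at $(\Psi,w)$ minus the same at $(\bar\Psi,\bar w)$, one obtains roughly a dozen terms, each a product of three H\"older-type quantities. The task is to keep exactly one difference factor per term and bound the remaining two by $C^\alpha$ norms of $\Psi,\bar\Psi,w,\bar w$, interpolating $|w|_\alpha \le C|w|_{2\alpha}$ where necessary to match the claimed form. All other bounds are then purely algebraic consequences of the Taylor expansion and the telescoping.
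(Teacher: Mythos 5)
Your plan to Taylor-expand $g$ and substitute $\delta\Psi = \Psi'\,\delta X + R_\Psi$ is the right starting point, and the identification $Y' = Dg(\Psi+w)\Psi'$ together with the bounds on $|Y|_{C^\alpha}$ and $|Y'|_{C^\alpha}$ does go through. However, a single second-order Taylor expansion of $g$ around $\Psi(x)+w(x)$ does not deliver the stated bound on $|R_Y|_{2\alpha}$, and the interpolation you invoke to repair this fails. Your quadratic remainder $\tfrac12 D^2g(\xi)\bigl[\delta\Psi+\delta w\bigr]^{\otimes 2}$ contributes the cross term $\delta\Psi\otimes\delta w$ and the term $\delta w^{\otimes 2}$, which add $|D^2g|_0\bigl(|\Psi|_\alpha|w|_\alpha + |w|_\alpha^2\bigr)$ to $|R_Y|_{2\alpha}$, whereas the lemma asserts only $|D^2g|_0|\Psi|_\alpha^2$ — no $D^2 g$-term involving $w$ whatsoever. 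The inequality $(|\Psi|_\alpha+|w|_\alpha)^2 \le C(|\Psi|_\alpha^2 + |w|_{2\alpha})$ does not hold as a general estimate: the left side is quadratic in $w$ while the right is linear (on $[0,1]$ one has $|w|_\alpha \le |w|_{2\alpha}$, but that only yields $|w|_\alpha^2 \le |w|_{2\alpha}^2$, still quadratic). So your argument leaves an extra $|D^2g|_0|w|_{2\alpha}^2$-type term that cannot be absorbed into the claimed right-hand side.

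This is not cosmetic: the precise shape of the $|R_Y|_{2\alpha}$ bound — in particular that $D^2g$ never multiplies a $w$-dependent factor — is exactly what the a priori bounds for global existence in the proof of Theorem~\ref{thm:MR} depend on. There the lemma is applied with $w = v^* + U$, and a term quadratic in $|w|_{2\alpha}$ would turn into $|v^*|_{C^1}^2$ in \eqref{eq:cf}, destroying the Gronwall argument. The paper's proof avoids this by splitting the increment of $Y$ \emph{before} any expansion:
\[
\delta Y(x,y) = \bigl[g(\Psi(y)+w(x)) - g(\Psi(x)+w(x))\bigr] + \bigl[g(\Psi(y)+w(y)) - g(\Psi(y)+w(x))\bigr],
\]
bounding the second bracket directly via the fundamental theorem of calculus by $|Dg|_0\,|w|_{2\alpha}\,|x-y|^{2\alpha}$ (no $D^2 g$ at all), and expanding only the first bracket one order further so that $D^2g$ meets $\delta\Psi^{\otimes 2}$ alone. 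Your telescoping scheme for the stability bounds \eqref{eq:comp-reg-func2a} is in the same spirit as the paper's integral representation for $Y-\bar Y$ and would work, but it too needs to be carried out after adopting this asymmetric split so that the $w$-dependence of $R_Y - R_{\bar Y}$ stays linear in $|w|_{C^{2\alpha}}$, as the statement requires.
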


\begin{remark}
It will sometimes be convenient to work with the following simplified version of \eqref{eq:comp-reg-func1}
\begin{equation}\label{eq:comp-reg-func3}
|Y|_{\mathcal{C}_X^\alpha} \, \leq \,  C |g|_{C^2} \big( 1+ |\Psi|_{\mathcal{C}^\alpha_X}^2 \big) \big(1 + |w|_{2\alpha}      \big).
\end{equation}
In the first fixed point argument we will use the following simplified version of  \eqref{eq:comp-reg-func2a} for the case when $X=\bar{X}$ and $\Psi = \bar{\Psi}$:
\begin{align}
|Y-\bar{Y}|_{\mathcal{C}_X^\alpha} \, \leq \,  C |g|_{C^3} \Big(1 +  |\Psi|_{\mathcal{C}_X^\alpha }^2   \Big)   \Big( 1 + |w|_{C^{2 \alpha}}  \Big)  |w-\bar{w}|_{C^{2 \alpha}} . \label{eq:comp-reg-func2}
\end{align}
\end{remark}

%
%

\begin{proof}
We can write
\begin{align}
\delta Y(x,y)\,=\,& \int_0^1 Dg \big( \Psi(x)+ w(x) +\lambda \, \delta \Psi(x,y)  \big)   \delta \Psi(x,y) \,  \mathrm{d}\lambda \notag \\
& \quad +  \int_0^1 Dg \big( \Psi(y)+ w(x) +\lambda \, \delta w(x,y)  \big)   \delta w(x,y) \,  \mathrm{d}\lambda. \label{eq:reg-fun1}
\end{align}
The second integral can be bounded by $|Dg|_0 |w|_{2\alpha} |x-y|^{2\alpha}$. The first integral in \eqref{eq:reg-fun1} can be rewritten as
\begin{align}
\int_0^1 \Big(  &Dg \big( \Psi(x)+ w(x) +\lambda \, \delta \Psi(x,y)  \big) - Dg \big( \Psi(x)+ w(x)   \big) \Big)     \delta \Psi(x,y)  \,  \mathrm{d}\lambda \notag\\
&+ Dg \big( \Psi(x)+ w(x)   \big)      \delta \Psi(x,y).  \label{eq:reg-fun2}
\end{align}
The integral in \eqref{eq:reg-fun2} can be bounded by $| D^2 g|_0 | \Psi|_{\alpha}^2 |x-y|^{2\alpha}$. Rewriting the term in the second line as 
\begin{equation}  \label{eq:reg-fun3q}
Dg \big( \Psi+ w   \big)      \delta \Psi \, = \, Dg \big( \Psi+ w   \big)    \big( \Psi' \delta X + R_\Psi \big) 
\end{equation}
finishes the proof of \eqref{eq:comp-reg-func1}.

Let us now derive \eqref{eq:comp-reg-func2a}. To get a bound on the $\alpha$ -H\"older semimorm of $Y-\bar{Y}$ write
\begin{align}
\delta & Y(x,y) - \delta \bar{Y}(x,y) \notag \\
 & = \, \delta \int_0^1 Dg \Big( \lambda( \Psi + w) + (1 - \lambda) (\bar{\Psi} + \bar{w}) \Big)  \Big( \Psi - \bar{\Psi}  + w- \bar{w} \Big) \d \lambda \,  (x,y) \notag\\
 & \leq \,  |x-y|^\alpha  \int_0^1 \Big| Dg \Big( \lambda( \Psi + w) + (1 - \lambda) (\bar{\Psi} + \bar{w})  \Big) \Big|_\alpha  \big( \big| \Psi - \bar{\Psi}\big|_0   + \big| w- \bar{w} \big|_0  \big) \notag\\
 & \qquad \qquad  \big| Dg \big( \lambda( \Psi + w) + (1 - \lambda) (\bar{\Psi} + \bar{w})  \big) \big|_0  \big(\big| \Psi - \bar{\Psi} \Big|_\alpha  + \big| w- \bar{w} \big|_\alpha  \big)  \d \lambda \, .  \label{eq:reg-fun4}
\end{align}
This together with the  bound $|Y-\bar{Y}| \leq |g|_{C^1} \big| \Psi - \bar{\Psi}\big|_0  + \big|w- \bar{w} \big|_0$ yields the first bound in \eqref{eq:comp-reg-func2a}. Note in particular, that $\Psi$ and $W$ appear quadratically in this estimate.The bound on 
\begin{equation}
Y'-\bar{Y}' \, = \, Dg(w+\Psi) \Psi' -  Dg(\bar{w}+\bar{\Psi}) \bar{\Psi}' 
\end{equation}
is obtained in the same way. To treat the remainder $R^Y$ observe that \eqref{eq:reg-fun1} - \eqref{eq:reg-fun3q} show that 
\begin{align}
R^Y(x,y) \, = \, & \Big(  g\big(\Psi(y) + w(y)\big) -   g\big(\Psi(y) + w(x)\big) \Big) + Dg\big(\Psi(x) + w(x)\big) R_\Psi(x,y) \notag\\
&  \int_0^1 Dg \big( \lambda ( \Psi(x) + w(x)) +(1- \lambda )( \Psi(y) + w(y) \big) \, \d \lambda \, \delta \Psi(x,y), 
\end{align}
and similarly for $R^{\bar{Y}}$. The bounds on the individual terms of $R^Y(x,y) - R^{\bar{Y}}(x,y)$ are obtained in an elementary way using similar integration as in \eqref{eq:reg-fun4} and we therefore omit the details.
\end{proof}

%
%
%
%

Finally, we will need a  version of Gronwall's Lemma (See \cite[Lemma 7.6]{NR02} for a similar calculation):

\begin{lemma}\label{lem:Gronwall}
Fix $\alpha, \beta \geq 0$ with $\alpha + \beta <1$ and $a,b >0$. Let $x:[0, \infty) \to [0, \infty)$ be continuous and suppose that for every $t \geq 0$
\begin{equation}\label{eq:cond-Gron}
x(t) \leq a + b \int_0^t (t-s)^{\alpha} s^{-\beta} x(s) \d s.
\end{equation}    
Then there exists a constant $c_{\alpha, \beta}$ that only depends on $\alpha, \beta$ such that for all $t$ 
\begin{equation}\label{eq:res-Gron}
x(t) \leq  a \, c_{\alpha, \beta}  \exp\Big( c_{\alpha, \beta}  \,  b^{\frac{1}{1-\beta-\alpha}} \, t \Big).
\end{equation}
\end{lemma}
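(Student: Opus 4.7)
The plan is to combine a scaling argument with a Picard--type iteration of the inequality, identifying the resulting series as (a dominated version of) a Mittag--Leffler function of order $\gamma := 1-\alpha-\beta$. (I shall read the exponent of $t-s$ in \eqref{eq:cond-Gron} as $-\alpha$: both the assumption $\alpha+\beta<1$ and the scaling $b^{1/(1-\alpha-\beta)}$ in \eqref{eq:res-Gron} are consistent only with a singular kernel.)

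\textbf{Scaling reduction.} Set $\lambda := b^{1/\gamma}$ and $y(\tau):=x(\tau/\lambda)$. The change of variable $s=u/\lambda$ in \eqref{eq:cond-Gron} yields, after using $b \lambda^{-\gamma}=1$,
\begin{equation*}
y(\tau) \le a + \int_0^\tau (\tau-u)^{-\alpha}\,u^{-\beta}\,y(u)\,\d u,
\end{equation*}
so it suffices to prove the lemma in the case $b=1$ and then unrescale.

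\textbf{Iteration.} With $b=1$, write $(Tf)(t):=\int_0^t (t-s)^{-\alpha}s^{-\beta}f(s)\,\d s$. The inequality $x\le a+Tx$ iterates to $x(t)\le a\sum_{n=0}^{N}(T^n\mathbf{1})(t) + (T^{N+1}x)(t)$. Using the Beta integral $\int_0^t (t-s)^{-\alpha}s^{q-1}\,\d s = t^{q-\alpha}B(1-\alpha,q)$ (valid for any $q>0$, since $\alpha<1$), a direct induction produces a closed expression for $(T^n\mathbf{1})(t)$ as $t^{n\gamma}$ times a telescoping product of Beta factors. Via Stirling's formula this product is dominated by $A^n/\Gamma(n\gamma+1)$ for some constant $A=A(\alpha,\beta)$, so that
\begin{equation*}
(T^n\mathbf{1})(t)\ \le\ \frac{A^n\, t^{n\gamma}}{\Gamma(n\gamma+1)}.
\end{equation*}

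\textbf{Summation and remainder.} The dominating series $\sum_{n\ge 0} A^n t^{n\gamma}/\Gamma(n\gamma+1)$ is the Mittag--Leffler function $E_\gamma(A t^\gamma)$, and for $\gamma\in(0,1)$ one has the standard bound $E_\gamma(z)\le c_\gamma \exp(c_\gamma z^{1/\gamma})$ on $z\ge 0$. This gives $\sum_n(T^n\mathbf{1})(t)\le c\exp(c't)$, while the continuity of $x$ on the compact interval $[0,\tau]$ together with the $\Gamma(n\gamma)$ in the denominator forces $(T^{N+1}x)(t)\to 0$ uniformly on compacts as $N\to\infty$. Unrescaling via the first step recovers \eqref{eq:res-Gron}.

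\textbf{Main obstacle.} The only delicate point is the inductive bound on $(T^n\mathbf{1})(t)$: after the first application of $T$ the weight $s^{-\beta}$ is promoted to $s^{\gamma-\beta}$, then to $s^{2\gamma-\beta}$, and so on, so the exponents shift by $\gamma$ at each step while $\beta$ lingers. One has to show via a Stirling--type comparison that the resulting product of Beta values, once divided by the single Gamma $\Gamma(n\gamma+1)$, is still bounded by $A^n$ for some $A$ depending only on $\alpha,\beta$. Apart from this book-keeping, every step of the argument is entirely standard.
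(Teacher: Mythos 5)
Your proposal is correct and takes essentially the same route as the paper: Picard-iterate the inequality, evaluate $(T^n\mathbf{1})$ in closed form via the Beta integral, and recognize a Mittag--Leffler-type dominating series. The only cosmetic differences are your preliminary rescaling to $b=1$ (the paper simply carries $b$ through) and your appeal to Stirling to bound the Gamma product, where the paper instead notes that $\prod_{i=1}^n\Gamma(i\gamma+\alpha)/\Gamma(i\gamma+1)$ with $\gamma=1-\alpha-\beta$ is almost telescoping and collapses, after observing that $\Gamma(i\gamma+1-\beta)/\Gamma(i\gamma+1)\le 1$ beyond a fixed threshold $N_0(\alpha,\beta)$, to a bounded constant times $1/\Gamma(n\gamma+1)$ --- which dispenses with the Stirling bookkeeping you correctly flag as the delicate step; you also correctly read the exponent $(t-s)^{\alpha}$ in the statement as the evident typo for $(t-s)^{-\alpha}$.
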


%
%

\begin{proof}
Define recursively the sequences 
\begin{align}
A_0(t) \, =\,& 1 \qquad \text{and} \qquad R_0(t) \, = \, x(t), \notag\\
A_{n+1}(t) \, = \,&  b \int^t_0  (t-s)^{\alpha} s^{-\beta} A_n(s) \, \d s  \qquad R_{n+1}(t) \, = \, b \int^t_0  (t-s)^{\alpha} s^{-\beta} R_n(s) \,  \d s. 
\end{align}
Then one can show by induction that
\begin{align}
A_n(t) \, = \,  \big( b\, t^{1 - \beta - \alpha} \,  \Gamma(1-\alpha) \big)^{n} \prod_{i=1}^n \frac{\Gamma \big(i - (i-1)\alpha - i \beta  \big)}{ \Gamma \big(i+1 - i\alpha - i \beta  \big)}. \label{eq:calc-bet}
\end{align}

One can furthermore show by another induction using \eqref{eq:cond-Gron} that for any $N$
\begin{align}
x_t \leq a \sum_{n=0}^{N} A_n(t) +R_{N+1}(t)
\end{align}
and that for all $0 \leq t \leq T$
\begin{align}
R_n(t) \leq   \Big( \sup_{0 \leq s \leq T} x_s \Big) \, A_n(t).
\end{align}
In order to see \eqref{eq:res-Gron} it thus remains only to bound the sum over the $A_n$. To this end note that the product in \eqref{eq:calc-bet} is almost telescoping as due to the monotonicity of the Gamma function on $[1,\infty)$ for $i \geq N_0 = \left\lceil  \frac{\beta}{1 - \alpha - \beta} \right\rceil $
\begin{equation}
\frac{\Gamma\big( i+1-i \alpha - i \beta  - \beta \big)  }{\Gamma\big( i+1 -i\alpha - i\beta \big) } \leq 1.
\end{equation}  
For $i < N_0$ we bound the same quotient by  
\begin{equation}
\frac{\Gamma\big( i+1-i \alpha - i \beta  - \beta \big)  }{\Gamma\big( i+1 -i\alpha - i\beta \big) } \leq c_0 = \sup_{t \in [ 2- \alpha- 2 \beta  ]}\Gamma(t) ,
\end{equation}
so that finally we get
\begin{align}
\beta_n(t) \, \leq  \,&  \big( b\, t^{1 - \beta - \alpha} \,  \Gamma(1-\alpha) \big)^{n} \Gamma(1-\beta) c_0^{N_0} \frac{1}{  \Gamma \big(n+1 - n\alpha - n \beta   \big)} .
\end{align} 

Then the claim follows, as for all $z \geq 0$ and $\gamma>0$,
\begin{equation}\label{eq:calcGron}
1+ \sum_{n=1}^\infty \frac{z^n}{\Gamma\big(n \gamma  +1 \big) } \leq C(\gamma) \, e^{2 z^{1/\beta}}.
\end{equation}
Actually, to see \eqref{eq:calcGron} one can assume without loss of generality that $z \geq 1$ and calculate
\begin{align}
1+ \sum_{n=1}^\infty \frac{z^n}{\Gamma\big(n \gamma  +1 \big) } \, \leq \, & 1 + \sum_{k=0 }^\infty \bigg(  \sum_{n \colon \lfloor n \gamma \rfloor = k}   \frac{ \big( z^{1/\gamma} \big)^{\gamma n}  }{k! }   \bigg) \notag\\
\leq \, & 1 + \sum_{k=0 }^\infty  \bigg\lceil \frac{1}{ \gamma} \bigg\rceil   \bigg(   z  \frac{ \big( z^{1/\gamma} \big)^{k }}{k! }   \bigg) \notag\\
\leq \, & C(\gamma) \,z \,  e^{ z^{1/\gamma}} \leq \,  C(\gamma)  \, e^{ 2 z^{1/\gamma}}.
\end{align}
Setting $\gamma = 1 - \beta - \alpha$ this finishes the argument. 
\end{proof}

%
%
%
%
Now we have all the ingredients to treat the operator $G_T$ defined in \eqref{eq:Def-G}. For a fixed $K > 0$. Define
\begin{gather}
B_{K} = \Big\{v \in C^1_T \colon \| v \|_{C^1_T}  \leq K \Big \},\label{eq:def-ball} 
\end{gather}
where we have set
\begin{equation}
\| v \|_{C^1_T} \,= \,      \sup_{0 \leq t \leq T}  | v(t) |_{C^1}. 
\end{equation}

\begin{proposition}\label{lem:ex-fp-g}
Let $(X,\mathbf{X})$ and $\Psi$ be as described above and fix the constant $K>0$. Then there exists a $T=T(K,\|(X,\mathbf{X})\|_{\mathcal{C}_{X,T}^\alpha}, \|\Psi \|_{\mathcal{C}_{X,T}^\alpha} )>0$ such that the mapping $G_{\Psi,T}$ is a contraction on $B_{K}$. In particular, there exists a unique fixed point which we will denote by $v^\Psi$.
\end{proposition}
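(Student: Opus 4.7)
The plan is to split $G_T(v) = I_1(v) + I_2(v)$, where $I_1$ is the heat-convolution of the drift term $g(u)\partial_x(v+U)$ and $I_2$ is the heat-convolution of the rough integral $\int_0^1\hat p_{t-s}(x-y)g(u(s,y))\,\mathrm{d}_y\Psi(s,y)$, with $u=U+\Psi+v$. For both parts we shall establish a bound of the form $\|I_j(v)\|_{C^1_T}\le C_\star T^{\kappa}$ with $\kappa>0$, where $C_\star$ depends only on $K$, $|u_0|_\beta$, $|g|_{C^3}$, $\|\Psi\|_{\mathcal{C}^\alpha_{X,T}}$ and $\|(X,\XX)\|_{\mathcal{D}^\alpha_T}$, and an analogous Lipschitz bound for $G_T(v)-G_T(\bar v)$. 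Choosing $T$ small enough will then give both self-mapping of $B_K$ and contractivity.

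For $I_1$, the standard bound $|S(\tau)f|_{C^1}\le C\tau^{-1/2}|f|_0$ combined with $|g(u(s))|_0\le|g|_0$, $|v(s)|_{C^1}\le K$ and the regularisation estimate \eqref{eq:reg-Ut} for $|U(s)|_{C^1}\le C s^{(\beta-1)/2}|u_0|_\beta$ yields
\begin{equation*}
|I_1(v)(t)|_{C^1}\le C|g|_0\int_0^t(t-s)^{-1/2}\bigl(K+s^{(\beta-1)/2}|u_0|_\beta\bigr)\,\mathrm{d}s\le C_\star\bigl(T^{1/2}+T^{\beta/2}\bigr).
\end{equation*}
For $I_2$, the supremum norm is handled by the $\hat p$-version of \eqref{eq:hk-bou2}, while for the derivative we apply \eqref{eq:hk-bou2} itself to $\partial_x\hat p_{t-s}$; both times we use \eqref{eq:comp-reg-func3} to control $|g(u(s))|_{\mathcal{C}^\alpha_X}\le C|g|_{C^2}\bigl(1+\|\Psi\|_{\mathcal{C}^\alpha_{X,T}}^2\bigr)\bigl(1+K+s^{(\beta-2\alpha)/2}|u_0|_\beta\bigr)$, where we have used $|v(s)|_{2\alpha}\le C|v(s)|_{C^1}\le CK$ and \eqref{eq:reg-Ut}. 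Hence
\begin{equation*}
|\partial_x I_2(v)(t)|_0\le C_\star\int_0^t(t-s)^{\alpha/2-1}\bigl(1+K+s^{(\beta-2\alpha)/2}|u_0|_\beta\bigr)\,\mathrm{d}s\le C_\star\bigl(T^{\alpha/2}+T^{(\beta-\alpha)/2}\bigr),
\end{equation*}
which is finite thanks to $\alpha>0$ and $\beta>\alpha$. The continuity $t\mapsto G_T(v)(t)\in C^1$ follows from dominated convergence together with the smoothing of $S(t-s)$. Taking $T$ small enough makes $\|G_T(v)\|_{C^1_T}\le K$.

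For contractivity, write $g(u)\partial_x(v+U)-g(\bar u)\partial_x(\bar v+U)=(g(u)-g(\bar u))\partial_x(v+U)+g(\bar u)\partial_x(v-\bar v)$, use $|g(u)-g(\bar u)|_0\le|g|_{C^1}\|v-\bar v\|_{C^1_T}$, and repeat the $I_1$-estimate to get a bound proportional to $(T^{1/2}+T^{\beta/2})\|v-\bar v\|_{C^1_T}$. For the rough-integral part, since both integrals use the same reference rough path $(X,\XX)$ and the same $\Psi$, Remark \ref{rem:sca3} effectively reduces matters to estimating $|g(u(s))-g(\bar u(s))|_{\mathcal{C}^\alpha_X}$; by \eqref{eq:comp-reg-func2} (applied with $w=v+U$, $\bar w=\bar v+U$) this is bounded by $C|g|_{C^3}\bigl(1+\|\Psi\|_{\mathcal{C}^\alpha_{X,T}}^2\bigr)\bigl(1+K+s^{(\beta-2\alpha)/2}|u_0|_\beta\bigr)|v-\bar v|_{2\alpha}$, and again $|v-\bar v|_{2\alpha}\le C\|v-\bar v\|_{C^1_T}$. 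The same time-integral computation as above then yields $\|G_T(v)-G_T(\bar v)\|_{C^1_T}\le C_\star\bigl(T^{1/2}+T^{(\beta-\alpha)/2}+T^{\alpha/2}\bigr)\|v-\bar v\|_{C^1_T}$, which is a strict contraction for $T$ sufficiently small.

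The main technical nuisance is the singularity of $|U(s)|_{C^1}$ and $|U(s)|_{C^{2\alpha}}$ at $s=0$: one must verify that the exponents $(\beta-1)/2$ and $(\beta-2\alpha)/2$ combine with the heat kernel exponents $-1/2$ and $\alpha/2-1$ to give integrable Beta-type integrals with strictly positive power of $T$. Thanks to $\tfrac13<\alpha<\beta<\tfrac12$, all relevant exponents are $>-1$, and the net exponents $\beta/2$, $(\beta-\alpha)/2$, $\alpha/2$ are strictly positive, which is exactly what lets us shrink $T$ to obtain both properties simultaneously. The fixed point $v^\Psi\in B_K$ then exists and is unique by Banach's theorem.
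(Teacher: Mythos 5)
Your proposal is correct and follows essentially the same route as the paper: the same decomposition $G_T = G_T^{(1)} + G_T^{(2)}$, the same $L^\infty\to C^1$ smoothing estimate combined with \eqref{eq:reg-Ut} for $G_T^{(1)}$, the same use of the scaling Lemma~\ref{lem:scaling} (via \eqref{eq:hk-bou2}) and the composition bounds \eqref{eq:comp-reg-func3}, \eqref{eq:comp-reg-func2} for $G_T^{(2)}$, and the same Beta-integral bookkeeping to produce strictly positive powers of $T$ for both self-mapping and contraction.
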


%
%

%
%
\begin{remark}
The proof is very similar to the proof of Theorem 4.7 in \cite{Ha10}. We can not apply this result directly as here the integrator $\Psi$ is an $X$ controlled rough path instead of a rough path by itself. Of course, $\Psi$ can be viewed as a rough path by itself, by defining the iterated integral as a rough integral, but this would lead to  problems in the next step of the construction as the mapping $\Psi \mapsto \int \delta \Psi \d \Psi$ maps a stochastic $L^p$ space continuously into a $L^{p/2}$ but not into itself, and it is not clear how to define a suitable stopping time to avoid this problem.
\end{remark}

\begin{proof}

We write 
\begin{align}
G_{T}(v)(t,x) \,=\,&  \int_0^t S(t-s)  \Big[  g\big(u(s) \big)  \partial_x \big( v(s) + U(s) \big) \Big]  \, \d s \,(x) \notag\\
&\qquad \qquad +\int_0^t \int_0^{1} p_{t-s} (x-y) g(u(y,s)) \, \mathrm{d}_y \Psi(y,s)     \, \d s \notag\\
= \,& G_{T}^{(1)}v (t,x) +G_{T}^{(2)} v (t,x). \label{eq:FP1}
\end{align} 

%
%
%

Let us treat the operator $G_{T}^{(1)}$ first. Using the fact that the operator $S(t)$ is bounded by $Ct^{-1/2}$ from $L^\infty$ to $C^1$ one gets
\begin{align}
\big| G_{T}^{(1)}v(t,\cdot) \big|_{C^1} \,  \leq \,&  \int_0^t \big| S(t-s) \big|_{L^\infty \to C^1} \,  \big| g \big|_0 \,  \Big( \big| v(s) \big|_{C^1}  +   \big| U(s) \big|_{C^1}  \Big)  \,\mathrm{d}s\notag \\
 \leq \, &   C |g|_{0}  \Big(   t^{1/2}   \| v \|_{C^1_T}  + t^{\frac{\beta}{2}} |u_0|_\beta  \Big), \label{eq:FP2}
\end{align} 
such that for $T$ small enough $  G_{T}^{(1)}$ maps $B_{K}$ into $B_{K/2}$. Regarding the modulus of continuity of $  G_{T}^{(1)}$ we write for $v,\bar{v}$ in  $B_{K}$ 
\begin{align}
\big| G_{T}^{(1)}v(t) \,-\,&G_{T}^{(1)} \bar{v}(t) \big|_{C^1}  \,  \leq  \,\bigg|  \int_0^t  S(t-s) \Big[ \Big( g \big(u(s)\big) - g \big(\bar{u}(s)\big) \Big)  \partial_y \big(  v(s) + U(s) \big) \Big]  \d s \bigg|_{C^1} \notag\\
&+ \, \Big|  \int_0^t  S(t-s) \Big[ \Big( g \big(\bar{v}(s)+\Psi(s) \big) \Big)  \partial_y \big( v(s) -\bar{v}(s) \big) \Big] \,  \d s \Big|_{C^1} , \notag
\end{align} 
such that
\begin{equation}
\big| G_{T}^{(1)}v(t) -G_{T}^{(1)} \bar{v}(t) \big|_{C^1} \leq C|g|_{C^1} \Big(  t^{1/2} (K+1)   + t^{\frac{\beta}{2}}   |u_0|_\beta \Big)  \| v -\bar{v} \|_{C^1_T}. \notag
\end{equation}
For $T$ small enough the last expression can be bounded by $\frac{1}{3}\| v -\bar{v} \|_{C^1_T}$.
%
%

Let us now treat the operator $G_{T}^{(2)}$. Using Lemma \ref{lem:scaling} (see also Remark \ref{rem:sca2}) and then Lemma \ref{lem:comp-reg-func} as well as \eqref{eq:comp-reg-func3} we get
\begin{align}
 \big| \partial_x  G_{T}^{(2)} & v (t,x)\big| \, = \,  \Big| \int_0^t  \int_0^{1} \partial_x \hat{p}_{t-s}(x-y) g\big(u(s,y) \big) \mathrm{d}_y \Psi(s,y) \,    \d s  \Big| \notag\\
 \leq \, & C \int_0^t   (t-s)^{\frac{\alpha}{2}-1} \big(1 + |(X(s),\mathbf{X}(s))|_{\mathcal{D}^\alpha} \! \big)\, \big| g\big(u(s) \big) \big|_{\mathcal{C}_{X(s)}^\alpha}  \big|\Psi(s) \big|_{\mathcal{C}_{X(s)}^\alpha}    \mathrm{d} s \notag\\ 
 \leq \, &  C  \big(1 + \|(X,\mathbf{X})\|_{\mathcal{D}^\alpha_T} \! \big)  \, |g|_{C^2} \notag\\
 & \qquad  \int_0^t   (t-s)^{\frac{\alpha}{2}-1}  \Big( 1+  \big|\Psi(s) \big|_{\mathcal{C}_{X(s)}^\alpha}^2  + K + |u_0|_\beta\,  s^{\frac{\beta-2 \alpha}{2}}  \Big)   \big|\Psi(s) \big|_{\mathcal{C}_{X(s)}^\alpha} \mathrm{d}s \notag\\
 \leq \, &  C  \big(1 + \|(X,\mathbf{X})\|_{\mathcal{D}^\alpha_T} \! \big)  \, |g|_{C^2} 
 \Big[   t^{\frac{\alpha}{2}}  \Big( 1+ K + \big\|\Psi(s) \big\|_{\mathcal{C}_{X}^\alpha}^3  \Big)    +t^{\frac{\beta-\alpha}{2}}  \, |u_0|_\beta\,    \big\|\Psi \big\|_{\mathcal{C}_{X}^\alpha}  \Big]  .\notag
\end{align}

A very similar calculation for the heat kernel without derivative using \ref{lem:scaling} (see also Remark \ref{rem:sca2}) gives
\begin{align}
 \big|   G_{T}^{(2)}v (t,x)\big| \, \leq \,  C   \big(1& + \|(X,\mathbf{X})\|_{\mathcal{D}^\alpha_T} \! \big)  \, |g|_{C^2} \notag \\
 & \Big( 1+  \big|\Psi(s) \big|_{\mathcal{C}_{X,T}^\alpha}^2 \Big)  
 \Big[   t^{\frac{\alpha+1}{2}} \Big( 1 + K  \Big)    +t^{\frac{\beta-\alpha+1}{2}}  \, |u_0|_\beta\,   \Big]    \big\|\Psi \big\|_{\mathcal{C}_{X}^\alpha} . \label{eq:ball-bou}
\end{align} 
So we can conclude that for $T$ small enough $G_{T}^{(2)}$ maps $B_{K}$ into $B_{K/2}$ as well.

%
%

To treat the modulus of continuity of $G_{T}^{(2)}$ write
\begin{align}
\partial_x   G_{T}^{(2)} &v(t,x) - \partial_x G_{T}^{(2)} \bar{v}(t,x)  \notag \\  = & \,  \int_0^t  \int_0^{1} \partial_x \hat{p}_{t-s}(x,y)  \Big( g \big(u(s,y)\big)  -g\big(\bar{u}(s,y)\big) \Big)  \mathrm{d}_y \Psi(s,y) \,  \d s. \label{eq:mod-cont1} 
\end{align}
Recall that $\bar{u}(s)= \bar{v}(s) + U(s) + \Psi(s) $.  Using Lemma \ref{lem:comp-reg-func} we see that for every $s$ the function $ g \big(u(s)\big)  -g\big(\bar{u}(s)\big) $ is an $X(s)$  controlled rough path. Thus applying \eqref{eq:hk-bou2} once more and then using \eqref{eq:comp-reg-func2} we get
\begin{equs}
\big| \partial_x   G_{T}^{(2)}&v(t,x) -\partial_x G_{T}^{(2)} \bar{v}(t,x) \big| \notag\\ 
\leq  \, & C \int_0^t   (t-s)^{\frac{\alpha}{2}-1} \big(1 + |(X(s),\mathbf{X}(s)|_{\mathcal{D}^\alpha} \! \big)\, \big| g\big(u(s) \big) -  g\big(\bar{u}(s) \big) \big|_{\mathcal{C}_{X(s)}^\alpha}  \big|\Psi(s) \big|_{\mathcal{C}_{X(s)}^\alpha}    \mathrm{d} s \notag\\ 
 \leq \, &  C  \big(1 + \|(X,\mathbf{X})\|_{\mathcal{D}^\alpha_T} \! \big)  \, |g|_{C^3}  \Big( 1+  \big|\Psi \big|^3_{\mathcal{C}_{X,T}^\alpha} \Big)  \notag\\
 & \qquad \qquad  \int_0^t   (t-s)^{\frac{\alpha}{2}-1} \big( 1+  K + |u_0|_\beta\,  s^{\frac{\beta-2 \alpha}{2}}   \Big)    |v(s)-\bar{v}(s)|_{C^1}    \mathrm{d}s \notag\\
 \leq \, &  C  \big(1 + \|(X,\mathbf{X})\|_{\mathcal{D}^\alpha_T} \! \big)  \, |g|_{C^3}   \Big( 1+  \big|\Psi \big|^3_{\mathcal{C}_{X,T}^\alpha} \Big) \notag\\
 & \qquad \qquad \Big[   t^{\frac{\alpha}{2}}  \Big( 1+ K  \Big)    +t^{\frac{\beta-\alpha}{2}}  \, |u_0|_\beta\,    \Big]  \|v(s)-\bar{v}(s)\|_{C^1_T}   .\label{eq:roca}
 \end{equs}
Repeating the same calculation yields a similar bound for  $ \big|   G_{T}^{(2)}v(t,x) -G_{T}^{(2)} \bar{v}(t,x) \big|$ with the same exponents of $t$ as in \eqref{eq:ball-bou} . Thus for $T$ small enough we can also bound $\big\|  G_{T}^{(2)}v -G_{T}^{(2)} \bar{v} \big\|_{C^1_T}$ by 
$ \frac{1}{3}\| v -\bar{v} \|_{C^1_T}$.  This finishes the proof. 
\end{proof}

%
%
%

We now discuss the continuous dependence of the fixed point $v^\Psi$ on the data. To this end let $(X, \XX) \in \DaT$,  $\Psi \in \CaT$ and the initial condition $u_0$  be as above and let $(\bar X, \bar{\XX}) \in \DaT$,  $\bar \Psi \in \mathcal{C}^\alpha_{\bar{X},T}$, $\bar{u}_0$ be another set of data. Assume that   all the norms $\|(X,\XX) \|_{\DaT}$, $\| \Psi \|_{\CaX}$ and $|u_0|_\beta$ as well as the corresponding norms for $\bar{X}$, $\bar{\Psi}$, and $\bar{u}$ are bounded by the constant $K$ which defines the size of the ball in which we solve the fixed point problem (see \eqref{eq:def-ball}). Note that by changing the existence time of the fixed point argument this can always be achieved. In particular, the existence time $T$ only depends on $K$ and will be fixed for the next proposition
\begin{proposition}\label{prop:cont-dep-psi}
Denote by $v=v^{X,\Psi,u_0}$ and by $\bar{v}=\bar{v}^{\bar{X}, \bar{\Psi} , \bar{u}_0}$ the fixed points constructed in Proposition \ref{lem:ex-fp-g} with the corresponding data. Then we have the following bounds
\begin{gather}
\| v - \bar{v} \|_{C^1_T } \, \leq \,C   \Delta_{\Psi, \bar{\Psi}}   \label{eq:c1-bou}
\end{gather} 
and 
\begin{gather}
\| v - \bar{v}\|_{C^{1/2}([0,T],C^0 )} \, \leq \,  C  \Delta_{\Psi, \bar{\Psi}}       \label{eq:calpha-bou},
\end{gather}
where we use $ \Delta_{\Psi, \bar{\Psi}} $ as abbreviation for
\begin{align}
 \Delta_{\Psi, \bar{\Psi}} \, = \, &  \bigg[ \|X-\bar{X} \|_{C^\alpha_T} +  \|\XX-\bar{\XX} \|_{\Omega C^{2\alpha}_T}    +    \| \Psi- \bar{\Psi} \|_{C^\alpha_T} +   \|\Psi'- \bar{\Psi'} \|_{C^\alpha_T} \notag \\
 &\qquad +  \|R_\Psi - R_{\bar{\Psi}}\|_{\Omega  C^{2\alpha}_T}     + |u_0 - \bar{u}_0 |_\beta  \bigg]. 
\end{align}
The constant $C$ here depends on $T,K,$ and $|g|_{C^3}$.
 \end{proposition}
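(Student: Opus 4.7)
The plan is to exploit the contraction property established in Proposition~\ref{lem:ex-fp-g}: since $T$ depends only on $K$ and both $G = G_{T,X,\Psi,u_0}$ and $\bar G = G_{T,\bar X,\bar \Psi,\bar u_0}$ are strict contractions on $B_K$ in the $C^1_T$-norm with Lipschitz constant bounded by $2/3$, we write
\begin{equation*}
v - \bar v \,=\, \bigl[G(v) - \bar G(v)\bigr] + \bigl[\bar G(v) - \bar G(\bar v)\bigr].
\end{equation*}
The second bracket is bounded by $\tfrac{2}{3}\|v-\bar v\|_{C^1_T}$, which is absorbed into the left-hand side. Thus \eqref{eq:c1-bou} reduces to showing $\|G(v) - \bar G(v)\|_{C^1_T} \le C \Delta_{\Psi,\bar\Psi}$ with $v$ \emph{fixed}.

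First I would treat the classical integral piece $G^{(1)}(v) - \bar G^{(1)}(v)$. This involves the differences $U - \bar U$, $\Psi - \bar \Psi$, and the composed difference $g(v + U + \Psi) - g(v + \bar U + \bar \Psi)$. Using the standard heat-semigroup estimate $\|S(t-s)\|_{L^\infty \to C^1} \le C(t-s)^{-1/2}$, the Lipschitz continuity of $g$, and the initial-data bound $|U(t) - \bar U(t)|_{C^1} \le C t^{(\beta-1)/2} |u_0 - \bar u_0|_\beta$ inherited from \eqref{eq:reg-Ut}, one obtains the required control by integrating the $(t-s)^{-1/2}$ kernel, which is finite on $[0,T]$.

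The key step is $G^{(2)}(v) - \bar G^{(2)}(v)$, the difference of the two rough integrals. Here I would invoke Remark~\ref{rem:sca3} with $f = \partial_x \hat{p}_{t-s}$ and scaling $\lambda = (t-s)^{-1/2}$ as in Remark~\ref{rem:sca2}, producing the factor $(t-s)^{\alpha/2 - 1}$. The resulting bound \eqref{eq:sca-bou-drp} contains the reference-rough-path differences $|X - \bar X|_{C^\alpha_T}$, $|\XX - \bar \XX|_{\Omega C^{2\alpha}_T}$, the controlled-rough-path differences on $\Psi$ versus $\bar \Psi$ (all three of $|\Psi-\bar\Psi|_{C^\alpha}$, $|\Psi' - \bar\Psi'|_{C^\alpha}$, $|R_\Psi - R_{\bar\Psi}|_{2\alpha}$), and the analogous triple of differences on $g(v + U + \Psi)$ viewed as an $X$-controlled rough path against $g(v + \bar U + \bar \Psi)$ viewed as a $\bar X$-controlled one. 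The latter are handled by the second block of bounds in Lemma~\ref{lem:comp-reg-func}: they are linear in $|\Psi - \bar \Psi|_{C^\alpha} + |\Psi'-\bar\Psi'|_{C^\alpha} + |R_\Psi - R_{\bar\Psi}|_{2\alpha} + |U - \bar U|_{C^{2\alpha}}$, with pre-factors depending polynomially on the norms of $v,\Psi,\bar\Psi,U,\bar U$, all of which are bounded by $K$. The term $|U(s) - \bar U(s)|_{C^{2\alpha}}$ is controlled via \eqref{eq:reg-Ut} by $C s^{(\beta-2\alpha)/2} |u_0 - \bar u_0|_\beta$. Since both $(t-s)^{\alpha/2-1}$ and $s^{(\beta-2\alpha)/2}$ are integrable on $[0,T]$, their convolution is uniformly bounded, yielding $\|G^{(2)}(v) - \bar G^{(2)}(v)\|_{C^1_T} \le C\, \Delta_{\Psi,\bar\Psi}$ and therefore \eqref{eq:c1-bou}.

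For the temporal estimate \eqref{eq:calpha-bou}, I would exploit the $1/2$-Hölder regularity in time of the heat-semigroup convolution. Splitting
\begin{equation*}
\bigl(v(t)-\bar v(t)\bigr) - \bigl(v(s)-\bar v(s)\bigr) = \int_s^t S(t-\tau)\bigl[\cdots\bigr]d\tau + \int_0^s \bigl[S(t-\tau)-S(s-\tau)\bigr]\bigl[\cdots\bigr]d\tau,
\end{equation*}
the first piece is controlled by $(t-s)$ times the uniform $C^0$-bound on the integrands (obtained by repeating the argument above but applied to $\hat p_{t-\tau}$ rather than $\partial_x \hat p_{t-\tau}$, gaining a factor $(t-\tau)^{1/2}$ in the kernel), and the second is bounded using $\|S(h)-I\|_{C^1 \to C^0} \le Ch^{1/2}$ against the already-controlled $C^1$-integrand; in both cases the constant is $C \Delta_{\Psi,\bar\Psi}$. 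The principal obstacle is the third paragraph: one must carefully coordinate the continuity bound \eqref{eq:sca-bou-drp} across two distinct reference rough paths $X$ and $\bar X$ with the compositional estimates from Lemma~\ref{lem:comp-reg-func}, ensuring that $v$ (not differenced at this stage) contributes only bounded pre-factors and that no copy of $\|v-\bar v\|_{C^1_T}$ slips back in to break the absorption argument.
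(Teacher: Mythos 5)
Your proof is essentially correct but takes a genuinely different route from the paper on the spatial estimate \eqref{eq:c1-bou}. The paper writes $v-\bar{v}= \big(G^{(1)}v - \bar{G}^{(1)}\bar{v}\big) + \big(G^{(2)}v - \bar{G}^{(2)}\bar{v}\big)$ directly, bounds each piece using \eqref{eq:reg-Ut}, the heat-kernel mapping properties, and the continuity bound \eqref{eq:sca-bou-drp}, and thereby arrives at an integral inequality in which $|v(s)-\bar{v}(s)|_{C^1}$ appears under a convolution kernel $(t-s)^{\frac{\alpha}{2}-1}(1+K^4+K^3 s^{\frac{\beta-2\alpha}{2}})$. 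Gronwall's Lemma~\ref{lem:Gronwall} then closes the argument. You instead split $v-\bar{v}=\big[G(v)-\bar{G}(v)\big]+\big[\bar{G}(v)-\bar{G}(\bar{v})\big]$, bound the first bracket with $v$ held fixed (so it contributes only $C\,\Delta_{\Psi,\bar\Psi}$), and absorb the second bracket using the uniform contraction constant $2/3$ established in Proposition~\ref{lem:ex-fp-g}. This requires, and you correctly note, that $T$ and the contraction constant are uniform over all data whose norms are bounded by $K$; since the existence time in Proposition~\ref{lem:ex-fp-g} only depends on $K$, this is justified. Your route is shorter and more conceptual — it reuses the Lipschitz constant of the fixed-point map rather than re-proving a Gronwall-style bound — whereas the paper's route is more self-contained and makes the dependence on the convolution kernels explicit. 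For the temporal estimate \eqref{eq:calpha-bou} the two arguments coincide: both split the time difference into an $\int_s^t$ piece (controlled via the scaling of $\hat{p}_{t-\tau}$ rather than $\partial_x\hat{p}_{t-\tau}$, which gains the extra $(t-\tau)^{1/2}$) and a $\big(S(t-s)-1\big)\int_0^s$ piece, the latter estimated by $\|S(t-s)-1\|_{C^1\to C^0}\leq C(t-s)^{1/2}$ against the already-proved $C^1$-bound \eqref{eq:c1-bou}. One minor imprecision: the first piece is controlled by $(t-s)^{\alpha/2+1/2}$, not literally by $(t-s)$ times a uniform $C^0$ bound, but since $\alpha/2+1/2>1/2$ and $t-s\leq T$ the $1/2$-Hölder claim still holds.
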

\begin{remark}\label{rem:cont-dep-rp}
Performing the same proof with a single set of data $X,\Psi,u_0$ shows that one also has the bounds 
\begin{gather}
\| v \|_{C^1_T } \, \leq \,C    \bigg[ \|(X,\XX)  \|_{\DaT}  +    \| \Psi \|_{\CaT}  + |u_0  |_\beta  \bigg]
\end{gather} 
and 
\begin{gather}
\| v \|_{C^{1/2}([0,T],C^0 )} \, \leq \,  C  \bigg[ \|(X, \XX)  \|_{\DaT}  +    \| \Psi \|_{\CaT}  + |u_0  |_\beta  \bigg].
\end{gather}
\end{remark}

\begin{proof}
%
%
%
Let us show the bound \eqref{eq:c1-bou} for the spatial regularity first. By the definition of $v$ and $\bar{v}$ we can write
\begin{align}
v-  \bar{v} \,= \,   \Big(   G^{(1)} v -  \bar{G}^{(1)} \bar{v} \Big)+  \Big(   G^{(2)}v -  \bar{G}^{(2)}\bar{v} \Big). \label{eq:SCB1}
\end{align}
The operators $G^{(i)}$ and $\bar{G}^{(i)}$ are the same as those defined as in \eqref{eq:FP1} with respect to the respective data $X, \Psi, u_0$ and $\bar{X}$, $\bar{\Psi}$, $\bar{u_0}$. To simplify the notation we omit the dependence on $T$.

 As above we use that the heat semigroup $S(t)$ is bounded by $C t^{-1/2}$ as operator from $L^\infty$ to $C^1$ we get for the first term
\begin{align}
\big|  G^{(1)} v(t) -  \bar{G}^{(1)}  \bar{v}(t) & \big|_{C^1}  \leq \, C(1+  K)  |g|_{C^1}  \int_0^t (t-s)^{-1/2}   s^{\frac{\beta-1}{2}}  \notag\\ 
 &   \cdot \Big(  \big| v(s)-  \bar{v} (s) \big|_{C^1}  +    \big| \Psi(s)-  \bar{\Psi}(s) \big|_{0} + |u_0-\bar{u}_0|_\beta     \Big)\, \mathrm{d}s .    \label{eq:SCB2}
\end{align}
For the second term in \eqref{eq:SCB1} a similar calculation to \eqref{eq:roca} using the scaling bound for different reference rough paths \eqref{eq:sca-bou-drp} shows
\begin{align}
\big|  G^{(2)} & v(t)-  \bar{G}^{(2)}  \bar{v}(t)  \big|_{C^1} 
  \leq    C   \, |g|_{C^3}  
 \int_0^t   (t-s)^{\frac{\alpha}{2}-1} \Big( 1 + K^4  + K^3 s^{\frac{\beta-2 \alpha}{2}} \Big)  \d s  \,   \Delta_{\Psi,\bar{\Psi}}  \notag\\
 &+    C   \, |g|_{C^3}  
 \int_0^t   (t-s)^{\frac{\alpha}{2}-1} \Big( 1 + K^4  + K^3 s^{\frac{\beta-2 \alpha}{2}} \Big) |v(s)- \bar{v}(s)|_{C^1} \d s .  \label{eq:SCB3}
 \end{align}

Now applying the Gronwall Lemma \ref{lem:Gronwall} to   \eqref{eq:SCB2} and  \eqref{eq:SCB3} yields \eqref{eq:c1-bou}.

%

%
%

To treat the time regularity we write as in \eqref{eq:SCB1}
\begin{align}
\big( v(t) &-  \bar{v}(t) \big)  - \big(  v(s)-  \bar{v}(s) \big)
\,= \,   \Big(   G v(t) -  \bar{G} \bar{v}(t) \Big) -  \Big(   G v (s)-  \bar{G} \bar{v}(s)\Big) \label{eq:SCB4}.
\end{align} 
We again separate  $G$ into $G^{(1)}$ and $G^{(2)}$ and get  
\begin{align}
\Big(   G^{(1)} & v(t) -  \bar{G}^{(1)} \bar{v} (t) \Big) -  \Big(   G^{(1)} v (s)-  \bar{G}^{(1)}  \bar{v} (s) \Big)\notag\\
 =\,& \Big( S(t-s) -1  \Big) \int_0^s S(s-\tau) \Big[ g\big( u(\tau) \big) \partial_x \big( v(\tau) +U(\tau)\big)  - g\big(\bar{u}(\tau) \big)  \partial_x \big( v^{\bar{\Psi}}(\tau) + \bar{U}(\tau)\ \big)     \Big] \mathrm{d}\tau  \notag\\
 &\quad +  \int_s^t S(t-u)\Big[ g\big( u(\tau) \big) \partial_x \big( v(\tau) +U(\tau)\big)  - g\big(\bar{u}(\tau) \big)  \partial_x \big( \bar{v}(\tau) + \bar{U}(\tau)\ \big)     \Big]   \mathrm{d}\tau. \label{eq:SCB5}
\end{align}
Note that here we write $\bar{u}=\bar{v} + \bar{\Psi} + \bar{U}$.  The first term on the right hand side of \eqref{eq:SCB5} can be bounded using \eqref{eq:c1-bou}
\begin{align}
C \big\| & S(t-s) -1  \big\|_{C^1 \to C^0}  |G^{(1)}  v(s) -  \bar{G}^{(1)} \bar{v} (s) |_{C^1}  \leq C (t-s)^{\frac{1}{2}}     \Delta_{\Psi,\bar{\Psi}}  \notag  ,
\end{align}
 because $\big\| S(t-s) -1  \big\|_{C^1 \to C^0} \leq C(t-s)^{1/2}$. The second term on the right hand side of \eqref{eq:SCB5} can be bounded by
\begin{align}
\int_s^t & \Big( |g|_0  \Big( \big|v(\tau)-  \bar{v}(\tau) \big|_{C^1} + \tau^{\frac{\beta-1}{2}} |u_0 -\bar{u}_0|_\beta \Big)\notag\\
 &  + \big( 1 +\tau^{\frac{\beta-1}{2}}\big) K  |Dg|_0 \Big( \big| v(\tau)-  \bar{v}(\tau) \big|_{0} +  \big|\Psi(\tau)- \bar{\Psi}(\tau) \big|_{0}  + |u_0 -\bar{u}_0|_\beta    \Big) \d \tau \notag\\
&\qquad \leq C (t-s)^{\frac{\beta +1}{2}}  \Delta_{\Psi,\bar{\Psi}},   \notag
\end{align}
where we use the fact that $S(t-u)$ is a contraction from $C^0$ into itself.

%
%

For the term involving $G^{(2)}$ we write
\begin{equs}
\Big(&   G^{(2)}  v(t,x) -  \bar{G}^{(2)} \bar{v} (t,x) \Big) -  \Big(   G^{(2)} v (s,x)-  \bar{G}^{(2)}  \bar{v} (s,x) \Big)\notag\\
=\,& \int_0^s  \! \bigg( \int_0^1 \!  \big( \hat{p}_{t-\tau}(x-y) -  \hat{p}_{s-\tau}(x-y)   \big)\,  g(u(\tau,y)) \, \mathrm{d}_y \Psi(\tau,y)   \notag\\
&\qquad  \qquad - \int_0^1 \big( \hat{p}_{t-\tau}(x-y) -  \hat{p}_{s-\tau}(x-y)   \big) \,g\big( \bar{u}(\tau,y) \big) \, \mathrm{d}_y   \bar{\Psi}(\tau,y)   \bigg) \, \mathrm{d}\tau  \label{eq:SCB7}\\
&+ \int_s^t \bigg( \int_0^1   \hat{p}_{t-\tau}(x-y)     g\big(u(\tau,y) \big) \, \mathrm{d}_y  \Psi(\tau,y)  
- \int_0^1 \hat{p}_{t-\tau}(x-y)     \,g\big( \bar{v}(y,\tau) \big) \, \mathrm{d}_y \bar{\Psi}(\tau,y)  \bigg) \, \mathrm{d}\tau. \notag
\end{equs}
The second summand in \eqref{eq:SCB7} can be bounded as above (see e.g. \eqref{eq:roca}, \eqref{eq:SCB3}) by:
\begin{align}
C |g|_{C^3} (t-s)^{\frac{\beta-\alpha+1}{2}} \Delta_{\Psi, \bar{\Psi}}.  \label{eq:SCB8}
\end{align}
In order to treat the first summand note that by the semigroup property
\begin{equation}
\hat{p}_{t-\tau}(x-y) \, =  \int_0^1  \hat{p}_{t-s}(x-z)   \hat{p}_{s-u}(z-y) \,  \mathrm{d} z.
\end{equation}
Thus in the first integral in \eqref{eq:SCB7} we can rewrite  using Lemma \ref{lem:fubini}, the Fubini Theorem for rough integrals: 
\begin{align}
  \int_0^1 &   p_{t-u}(x-y) \, g \big(u(\tau,y) \big)\, \mathrm{d}_y \Psi(u,y) \,\mathrm{d}u \notag\\
 &=\, \int_0^1 \Big(  \int_0^1 \hat{p}_{t-s}(x-z)  \, \hat{p}_{s-\tau}(z-y) \, \mathrm{d} z \Big)   \, g\big(u(\tau,y) \big)\, \mathrm{d}_y \Psi(\tau,y) \,  \notag\\
&= \,  S(t-s)   \Big( \int_0^1 \hat{p}_{s-\tau}(\cdot -y)   \, g\big(u(\tau,y) \big)\, \mathrm{d}_y \Psi(\tau,y)      \Big)(x),
\end{align}
and similarly for the integral involving $\bar{\Psi}$. Thus the first difference in \eqref{eq:SCB7} can be bounded by
\begin{align}
\big| S(t-s) - 1\big|_{C^1 \to C^0}  \Big|   G^{(2)} v (s) -  \bar{G}^{(2)} \bar{v} (s) \Big|_{C^1}  \leq C(t-s)^{1/2}    \Delta_{\Psi,\bar{\Psi}}     .    
\end{align}
Here we have used  \eqref{eq:c1-bou} again. This finishes the proof.
\end{proof}

Now we are ready to construct local solutions to \eqref{eq:Stoch-Burg}.  To this end we introduce  the following families of stopping times.  For $K_1, K_2, K_3>0$ and for any adapted processes $\Psi$, $R$ taking values in $C^\alpha$ resp. $\Omega C^{\alpha}$ denote by 
\begin{align}\label{eq:def-st}
\tau_{K_1}^X\,  =  \, & \inf  \bigg\{ t \in  [0,T] \colon  \sup_{\substack{ x_1 \neq x_2 \\ 0 \leq s_1< s_2 \leq t}}  \frac{\big|X(s_1,x_1) -  X(s_2,x_2) \big|  }{|s_1-s_2|^{\alpha/2}  + |x_1-x_2|^{\alpha} } +| \XX(t) |_{2\alpha}    >  K_1  \bigg\}
 \notag\\
\tau_{K_2}^{|\Psi|_{C^\alpha} } \,=  \, & \inf  \bigg\{ t \in  [0,T] \colon    | \Psi(t) |_{C^\alpha}  >  K_2  \bigg\} \notag \\
\tau_{K_3}^{|R |_{2\alpha} } \,=  \, & \inf  \bigg\{ t \in  [0,T] \colon    | R(t) |_{2\alpha}  >  K_3  \bigg\} .
\end{align}
Note that the definition of $\tau^X_K$ is slightly different from \eqref{eq:def-tau} as it also contains information about $\XX (t)$. It is almost surely less or equal to the stopping time defined above and in particular Proposition \ref{prop:Stoch-Conv-Rough-Path} is valid also with this modified definition.

%
%
%
%

\begin{proposition}\label{prop:loc-ex}
For any initial data $u_0 \in C^\beta$ there exists a $T^*>0$,  an adapted process $u^*(t,x)$ for $(t,x) \in [0,T^*] \times [0,1]$ and a stopping time $\stf$ such that up to $T^* \wedge \stf$ the process $u^*$ satisfies \eqref{eq:mild-sol}.  Furthermore, the time $T^*$ only depends on $|u_0|_\beta, K_1, K_2, K_3$ and the stopping time $\stf$ is given by
\begin{equation}
\stf = \tau_{K_1}^{X} \wedge  \tau_{K_2}^{|\Psi^{\theta(u)}|_{C^\alpha} } \wedge  \tau_{K_3}^{|R^{\theta(u)} |_{2\alpha} }.
\end{equation} 
\end{proposition}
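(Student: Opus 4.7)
The plan is an outer Picard iteration in a space of adapted processes, complementing the inner deterministic fixed point from Proposition \ref{lem:ex-fp-g}. Fix $K_1, K_2, K_3$ and write $\sigma = \stf$. For $T^* \in (0,T]$ and $M > 0$ (both to be chosen), let $\mathcal{B}_{T^*,M}$ denote the closed ball of adapted processes $u$ on $[0, T^* \wedge \sigma] \times [0,1]$ with stochastic parabolic H\"older norm $\$ u \$_{p,\alpha} \leq M$, in the norm of \eqref{eq:def-nom}. On $\mathcal{B}_{T^*,M}$ define
\begin{equation*}
\Gamma(u)(t) = S(t)u_0 + \Psi^{\theta(u)}(t) + v^{\theta(u)}(t),
\end{equation*}
where $\Psi^{\theta(u)}$ is the stochastic convolution with integrand $\theta(u)$ and $v^{\theta(u)}$ is the inner fixed point obtained by applying Proposition \ref{lem:ex-fp-g} with reference rough path $(X,\XX)$ from Proposition \ref{prop:Gau-conv-rp} and controlled rough path $\Psi^{\theta(u)}$. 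This inner step is legitimate on $[0, \sigma]$ by Proposition \ref{prop:Stoch-Conv-Rough-Path}, whose parabolic regularity hypothesis on the integrand is enforced by $\$u\$_{p,\alpha} \leq M$ and whose stopped rough path norms are bounded by $K_1, K_2, K_3$. Any fixed point of $\Gamma$ is a mild solution on $[0, T^* \wedge \sigma]$ by construction.

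For the self-map property, $S(\cdot)u_0$ has parabolic H\"older norm bounded by $C|u_0|_\beta$; Proposition \ref{prop:Reg-Stoch-Conv} together with Remark \ref{rem:reg-stoch-conv} gives $\$\Psi^{\theta(u)}\$_{p,\alpha} \leq C(T^*)^{\kappa}(1+M)$ for some $\kappa > 0$; and Remark \ref{rem:cont-dep-rp}, combined with the bounds $|\Psi^{\theta(u)}(t)|_{C^\alpha} \leq K_2$, $|R^{\theta(u)}(t)|_{2\alpha} \leq K_3$ and $\|(X,\XX)\|_{\DaT} \leq K_1$ that hold for $t \le \sigma$, controls $v^{\theta(u)}$ in $C^1_{T^*} \cap C^{1/2}([0,T^*],C)$ by a constant depending only on $K_1, K_2, K_3, |u_0|_\beta, T^*$. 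Choosing $M$ large relative to these fixed bounds, and then $T^*$ small, yields $\$\Gamma(u)\$_{p,\alpha} \leq M$. For contraction, Proposition \ref{prop:cont-dep-psi} gives
\begin{equation*}
\| v^{\theta(u)} - v^{\theta(\bar u)} \|_{C^1_{T^*}} + \| v^{\theta(u)} - v^{\theta(\bar u)} \|_{C^{1/2}([0,T^*],C)} \leq C\, \Delta_{\Psi^{\theta(u)}, \Psi^{\theta(\bar u)}},
\end{equation*}
in which the $X$- and $u_0$-contributions to $\Delta$ vanish because we work with a common reference rough path and initial data. Since $\theta \in C^2$ with bounded derivatives, $\Psi^{\theta(u)} - \Psi^{\theta(\bar u)} = \Psi^{\theta(u) - \theta(\bar u)}$, so Propositions \ref{prop:Reg-Stoch-Conv} and \ref{prop:Stoch-Conv-Rough-Path} applied to the integrand $\theta(u) - \theta(\bar u)$ bound the remaining $\Psi$- and $R^{\theta}$-terms of $\Delta$ by $C(T^*)^\kappa (1+M)^c \,\$u - \bar u\$_{p,\alpha}$ for some $c > 0$. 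Summing the estimates, $\$\Gamma(u) - \Gamma(\bar u)\$_{p,\alpha} \leq C(T^*)^\kappa \$u - \bar u\$_{p,\alpha}$, and taking $T^*$ small gives a strict contraction; the Banach fixed point theorem then produces the unique $u^* \in \mathcal{B}_{T^*,M}$.

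The main obstacle is that $\Psi^{\theta(u)}$ is not automatically controlled by $X$ as a rough path: Proposition \ref{prop:Stoch-Conv-Rough-Path} yields control only up to $\tau_{K_1}^X$, and its bounds grow with the parabolic H\"older norm of $\theta(u)$. This forces the outer iteration to take place in the norm $\$\cdot\$_{p,\alpha}$, which must simultaneously be strong enough to feed into the hypotheses of Proposition \ref{prop:Stoch-Conv-Rough-Path} and weak enough to be regained, with a small factor of $T^*$, from the three pieces of $\Gamma(u)$. The composite stopping time $\sigma = \stf$ is essential: it simultaneously freezes the reference rough path, the spatial rough path norm of $\Psi^{\theta(u)}$, and its remainder $R^{\theta(u)}$, thereby turning the constants from Propositions \ref{lem:ex-fp-g} and \ref{prop:cont-dep-psi} into quantities that depend only on $K_1, K_2, K_3$ and $|u_0|_\beta$. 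Without this truncation the Gronwall and contraction estimates above would not close uniformly, and the resulting $T^*$ would not be deterministic in the required sense.
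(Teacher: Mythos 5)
Your overall architecture --- an outer Picard iteration in the $\$\cdot\$_{p,\alpha}$ norm, with the inner deterministic fixed point and the stochastic convolution feeding into each other --- matches the paper's proof, and your identification of the key difficulty (that $\Psi^{\theta(u)}$ is only controlled by $X$ through Proposition \ref{prop:Stoch-Conv-Rough-Path}, with bounds growing in $\$\theta(u)\$_{p,\alpha}$) is exactly right.

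However, there is a circularity in the way you set up the ambient space that the paper is specifically engineered to avoid. You fix $\sigma = \stf$ and then define $\mathcal{B}_{T^*,M}$ as a ball of adapted processes on $[0, T^*\wedge\sigma]\times[0,1]$. But $\stf$ includes $\tau_{K_2}^{|\Psi^{\theta(u)}|_{C^\alpha}}$ and $\tau_{K_3}^{|R^{\theta(u)}|_{2\alpha}}$, which are stopping times defined through the very unknown $u$ you are trying to construct. Thus $\sigma$ is not a single stopping time; it is a map $u\mapsto\sigma(u)$, and the ``ball'' $\mathcal{B}_{T^*,M}$ is not a fixed Banach space on which $\Gamma$ can be viewed as a contraction. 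Concretely: to compare $\Gamma(u)$ and $\Gamma(\bar u)$ you would need both to live on a common stochastic interval, and there is no reason $\sigma(u)=\sigma(\bar u)$; moreover, all the quantitative bounds you invoke (self-map and contraction estimates) are derived on $[0,\sigma(u)]$, which again moves with $u$.

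The paper resolves this by \emph{not} stopping at the $u$-dependent times during the iteration. Only $\tau^X_{K_1}$ --- which depends solely on the fixed Gaussian reference rough path $(X,\XX)$ and hence is a bona fide stopping time, independent of the candidate --- enters the definition of the operator $\mathcal{N}$. The control on $|\Psi^{\theta(u)}|_{\CaX}$ that you get for free from $\tau_{K_2}$ is instead implemented via a smooth cutoff $\chi_{K_2}$ applied to the controlled rough path, replacing $\Psi^{\theta(u)}$ by $\Psi^{\theta(u)}_{K_2}$ as in \eqref{eq:cutoffpsi} (and similarly the parabolic norm bound $K_4$ on $u$ is enforced by working in $\Ap_{K_4}$, producing the constant $K_5$). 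This makes $\mathcal{N}$ a genuine self-map on a fixed space of adapted processes on the deterministic interval $[0,T^*]$. Only after the Banach fixed point theorem produces $u^*$ does one observe that for $t\le\stf$ the cutoffs are inactive, so $u^*$ satisfies \eqref{eq:mild-sol} there. This cutoff mechanism is the missing idea in your argument; without it, the fixed point problem is not posed in a fixed complete metric space and the Banach fixed point theorem cannot be applied as you state.
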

%
%
%
%
\begin{proof}
We construct $u$ by another fixed point argument, this time in the space of adapted stochastic processes on a time interval $[0,T]$, such that $\$ \cdot \$_{p,\alpha}$ is finite for suitably chosen $p$. (Recall that $\$ \cdot \$_{p,\alpha}$ was defined in \eqref{eq:def-nom}). Let us denote the space of all these processes by $\Ap$. Furthermore, denote by $\Ap_{K_4}$ the set of processes $u \in \Ap$ with $\$ u \$_{p,\alpha} \leq K_4$.

We first need to introduce a cutoff function: note that by definition of the stopping times for $ t \leq \stf$ and for $u \in \Ap_{K_4}$ one has $\| \Psi^{\theta(u)}\|_{\CaT} \leq C |\theta|_{C^1} K_4 + K_2 +K_3 = K_5$. Then for or $K_5 > 1$ let $\chi_{K_5}$ be a decreasing, non-negative $C^1$ function which is constantly equal to one on $(-\infty, K_5)$ and such that $ |\chi_{K_5}'|_0 \leq 1$ and for all $x \geq K_5$
\begin{equation}
x \, \chi_{K_5} (x) \leq 2 K_5. 
\end{equation}

Now for $u \in \Ap$ define the operator $\mathcal{N}$ as
\begin{align}
\mathcal{N}u(t) = S(t_{K_1} ) u_0 + \Psi^{\theta(u)}(t_{K_1}) + v^{\Psi_{K_2}^{\theta(u)}}(t_{K_1}),
\end{align}
where we write $t_{K_1} =  t \wedge \tau_{K_1}^{X }$. 
 
Here the $\Psi^{\theta(u)}$ is the stochastic convolution defined in \eqref{eq:Stoch-Conv} for the adapted process $\theta \big(u(t,x) \big)$. By $\Psi_{K_2}^{\theta(u)}$ we denote the same stochastic convolution cut off at $K_2$, i.e.
\begin{equation}
\Psi_{K_2}^{\theta(u)}(t)  \,= \, \Psi^{\theta(u)} (t   ) \, \chi_{K_2} \Big( \big|  \Psi^{\theta(u)}(t)  \big|_{ \mathcal{C}_{X(t)}^\alpha}    \Big). \label{eq:cutoffpsi}
\end{equation} 
Finally, by $v^{\Psi_{K_2}^{\theta(u)}}$ we denote the fixed point constructed in Proposition \ref{lem:ex-fp-g}. Note that  due to the definition of $\chi_{K_2}$  clearly $\| \Psi_{K_2}^{\theta(u)}\|_{\CaT} \leq 2 K_2 $. Furthermore, due to the definition of the stopping time $\tau^X_{K_1}$ all relevant norms of the reference rough path $(X,\XX)$ are bounded by $K_1$ so that this fixed point is defined up to a final time $\tilde{T}(K_1,K_5) \wedge \tau_{K_1}^X$ that does not depend on $u$. 

For $T$ small enough and for $K_4$ big enough (depending on $|u_0|_\beta$) the operator $\mathcal{N}$ maps $\Ap_{K_4}$ into itself. Indeed, the deterministic part $S(t) u_0$ has the right regularity due to the assumption $u_0 \in C^\beta$ and standard properties of the heat semigroup. Due to the boundedness of $\theta$  we can apply Proposition  \ref{prop:Reg-Stoch-Conv} (see also Remark \ref{rem:reg-stoch-conv}) to see that for $p >  \frac{12}{1-2\alpha} $ and for $T$ small enough the stochastic convolution $\Psi^{\theta(u)}$ also takes values in $\Ap_{K_4}$. Finally, as clearly $\$ \theta(u) \$_{p,\alpha} \leq  | \theta |_{C^1}  \big(1 + \$  u\$_{p,\alpha} \big)$ we can apply   Proposition \ref{prop:Stoch-Conv-Rough-Path} to conclude that for $p$ satisfiying \eqref{eq:condp2}  $\Psi^{\theta(u)}$ is in fact a controlled rough path with 
\begin{equation}
\Ex \big[  \sup_{0 \leq t \leq T \wedge \tau_{K_1}^X} |R^{\Psi^{\theta(u)}}|_{2\alpha}^p  \big]  \, \leq \, C(1 + K_1^p) \, T^\vartheta  \, | \theta |_{C^1} \,  \big( 1 + K_4 \big),
\end{equation}
for any $\vartheta$ satisfying \eqref{eq:cond-thet2}. So Proposition \ref{prop:cont-dep-psi} (see also Remark \ref{rem:cont-dep-rp}) implies that also $ v^{\Psi_K^{\theta(u)}}$ takes values in $\Ap_{K_4}$ for $T$ small enough. Note that $K_4$ can be chosen depending only  on $|u_0|_{\beta}$ and $K_1, K_2, K_3$.  

Let us show that $\mathcal{N}$ is indeed a contraction. To this end for $u, \bar{u}$ calculate
\begin{equation}
\$ \mathcal{N} u - \mathcal{N}\bar{u} \$_{p,\alpha} \leq  \$ \Psi^{\theta(u)} - \Psi^{\theta(\bar{u})} \$_{p,\alpha} +  \$ v^{\Psi_K^{\theta(u)}} - v^{\Psi_K^{\theta(\bar{u})}} \$_{p,\alpha}.
\end{equation}
To deal with the stochastic convolutions note that by Proposition \ref{prop:Reg-Stoch-Conv} and by Remark \ref{rem:reg-stoch-conv} for any $p > \frac{12}{1-2 \alpha}$ there exists a $\vartheta >0$ such that  
\begin{align}
  \$ \Psi^{\theta(u)}(t) - \Psi^{\theta(\bar{u})}(t) \$_{p,\alpha} \, \leq \,   & C T^{\vartheta/2}  \$ \theta(u) -\theta(\bar{u}) \$_{p,0} \notag\\
  \leq \, & C T^{\vartheta/2} |\theta |_{C^1} \$ u -\bar{u} \$_{p,0}. \label{eq:concl1}
\end{align}
So for $T$ small enough this can be bounded by $\frac{1}{3} \$ u -\bar{u} \$_{p,\alpha}$.

A calculation similar to the proof of Lemma \ref{lem:comp-reg-func} shows that 
\begin{align}
\$ \theta(u) - \theta(\bar{u}) \$_{p,\alpha} \, \leq \, &C(1+K_4)   |  \theta |_{C^2}   \$u -\bar{u} \$_{p,\alpha} \label{eq:concl2}.   
\end{align}
Note that in this bound the factor $K_4$ appears and we do not expect a similar bound independent of $K_4$ to hold . It is the only place where we use that $\mathcal{N}$ is defined on a ball. 

Therefore, using Proposition \ref{prop:Stoch-Conv-Rough-Path} we can see that for $p$ even bigger (satisfying \eqref{eq:condp2}) we can chose an even smaller $\vartheta$ (as in \eqref{eq:cond-thet2}) such that 

\begin{align}
\Ex  \bigg[   \big\| R^{\theta(u)}  - R^{\theta(\bar{u})}  \big\|_{C \left( [0,\tau_{K_1}^{X}];\Omega C^{2\alpha}  \right)    }^p    \bigg] \,\leq \, C T^{\vartheta/2}  \$ u-\bar{u} \$_{p,\alpha}^{p}\label{eq:concl3}  . 
\end{align}
Now using \eqref{eq:concl1} - \eqref{eq:concl3} and the continuous dependence of the fixed point $v^\Psi$ on the controlled rough path $\Psi$, Proposition \ref{prop:cont-dep-psi}, we get 
\begin{align}
 \$ v^{\Psi_K^{\theta(u)}} - v^{\Psi_K^{\theta(\bar{u})}} \$_{p,\alpha} \leq C  T^{\vartheta/2}  \$ u-\bar{u} \$_{p,\alpha}^{p}. 
\end{align}
Choosing a $T^*$ small enough this quantity can also be bounded by  $\frac{1}{3} \$ u-\bar{u} \$_{p,\alpha}^{p}$. This shows that for this $T^*$ and for $u,\bar{u} \in \Ap_{K_4}$ indeed $\$ \mathcal{N} u - \mathcal{N}\bar{u} \$_{p,\alpha} \leq   \frac{2}{3}\$ u-\bar{u} \$_{p,\alpha}^{p} $ and in particular there exists a unique fixed point $u^*$. 

To confirm that $u^*$ is indeed a mild solution up to $T^* \wedge \stf$ note that for $t \leq \stf $ the fixed point equation reads
\begin{align}
u(t,x) \,=  \,& S(t) u_0 + \Psi^{\theta(u)}(t) + v^{\Psi^{\theta(u)}}(t) \notag\\
=  \,& S(t) u_0 + \Psi^{\theta(u)}(t)  + G_{\Psi(u)}   v^{\Psi^{\theta(u)}}(t),
\end{align}
which is precisely the definition of a mild solution. 
\end{proof}
%
%
%

Now we are ready to finish the construction of global solutions to \eqref{eq:Stoch-Burg}. We now show global existence as well as uniqueness:

\begin{proof}(of Theorem \ref{thm:MR})
We will show global existence first. Fix a $T > 0$. We will show that we can construct solutions up to time $T$. According to Proposition \ref{prop:loc-ex} for fixed $K_1, K_2, K_3$ and any initial data $u_0 \in C^\beta$ we can construct a process $u^*$ up to time $T^*$ which then is a solution up to some stopping time 
$T^* \wedge \stf$. By taking $u^*(T)$ as new initial condition and then iterating this procedure (for fixed $K_i$) one can extend this process up to $T \wedge T^{**}$ where $T^{**}$ is the first blowup time of $|u^*|_{\beta}$. 

A priori one should be careful at this point. The bounds used in construction of local solution have been derived using that the reference rough path $X$ starts at $0$. So in this way we get solutions with a reference rough path that is restarted at $T^*$. But the discussion after Definition \ref{def:weak-sol} shows that this does not matter.

Using the definition of the fixed point $u^*$ we get for any $t \leq T$ that
\begin{align}
| u^*(t)|_{\beta} \, \leq  \, & | u_0|_\beta  + | \Psi^*(t) |_{\beta}  +  \big|  G^* v^* (t)\big|_\beta. \label{eq:ge1}
\end{align}     
In the first term we have used the contraction property of the heat semigroup. Here to shorten the notation we write $\Psi^*(t)$ for $\Psi_{K_2}^{\theta(u^*)} (t_{K_1})$,   $v^*(t)$ for $ v^{\Psi_{K_2}^{\theta(u^*)}}(t_{K_1})(t)$, as well as $G^*$ for $G_{\Psi_{K_2}^{\theta(u^*)}}$. Recall that $t_{K_1} = t \wedge\tau^X_{K_1}$.  In particular, the construction of the process $u^*$ still includes all the cutoffs, and therefore the right hand side of \eqref{eq:ge1} depends on the $K_i$, although this is suppressed in the notation. 

 For the second term we get due to Proposition \ref{prop:Reg-Stoch-Conv} and Remark \ref{rem:reg-stoch-conv}
\begin{equation}
\Ex \Big[  \sup_{0 \leq t \leq T }  \big|  \Psi^*(t)  \big|_{\beta}^p \Big] 
\leq C | \theta |_0^p \label{eq:ge2}
\end{equation}
for $p$ big enough, so that this quantity can not blow up. Note that the constant in \eqref{eq:ge2} does not depend on the $K_i$.

To deal with  $ v^*$ we need to perform a calculation which is very similar to the proof of Proposition \ref{lem:ex-fp-g} and Proposition \ref{prop:cont-dep-psi}. The difference is that we need to use the full formulas \eqref{eq:sca1} and \eqref{eq:comp-reg-func1} instead of \eqref{eq:sca3} and \eqref{eq:comp-reg-func3}. Using the definition of the fixed point $v$ and recalling the definitions of the operators $G^{(1)}_{\Psi^*}$ and $G^{(2)}_{\Psi^*}$ from \eqref{eq:FP1} we get
\begin{align}
|v^*(t)|_{C^1} \, \leq \, | G^{(1)}_{\Psi^*} v^*(t) |_{C^1} + | G^{(2)}_{\Psi^*} v^*(t) |_{C^1} . \label{eq:ge3}
\end{align}
In the same way as in in \eqref{eq:FP2} we get for the term involving $G^{(1)}_{\Psi^*}$:
\begin{equation}
| G^{(1)}_{\Psi^*} v^*(t) |_{C^1}  \,  \leq  \,  C |g|_{0}  \int_0^{t_{K_1}} (t_{K_1}-s)^{-\frac{1}{2}} \Big( |v^*(s)|_{C^1} +  s^{\frac{\beta-1}{2}} |u_0|_\beta  \Big)\,  \d s. \label{eq:ge}
\end{equation}
To treat $G^{(2)}_{\Psi^*}$ write very as in  \eqref{eq:ball-bou} but using the more precise bounds \eqref{eq:sca1} and \eqref{eq:comp-reg-func1}
\begin{align}
\big| &  G^{(2)}_{\Psi^*} v^*(t) \big|_{C^1} 
\, \leq \, C |g|_{C^2} \! \int_0^{t_{K_1}}  \! (t_{K_1}-s)^{\frac{\alpha}{2}-1} \notag\\ 
&\bigg[  |\Psi^*(s) |_\alpha \Big( 1+  |v^*(s)|_{C^1} + s^{\frac{\beta-2 \alpha}{2}} |u_0|_\beta 
    +   |\Psi^*(s) |_\alpha^2 + \big| R_{\Psi^*} (s) \big|_{2 \alpha}    \Big)   \notag\\
   &  + |X(s)|_\alpha |\theta |_0 |R_{\Psi^*}(s) |_{2 \alpha}   + | \XX(s)|_{2 \alpha}|\theta|_{C^1}^2  \Big( |v^*(s)|_{C^1}  + |\Psi^*(s) |_\alpha + |u_0|_{\alpha} \Big)  \notag\\ 
 &  +  | \XX(s)|_{2 \alpha} |\Psi^*(s) |_0 \, |\theta|_0 \, \Big(   |\Psi^*(s) |_\alpha + |v(s)|_{C^1} + |u_0|_\alpha  \Big)      +|X(s)|_\alpha^2 | \theta |_0 |\Psi^*(s) |_\alpha  \bigg] \, \d s. \label{eq:cf} 
\end{align}
We observe that all the terms on the right hand side of \eqref{eq:cf} except for $|v^*(s)|_{C^1}$ are bounded by powers of the constants $K_i $ and $ |v^*(s)|_{C^1}$ only appears linearly. Thus the Gronwall Lemma \ref{lem:Gronwall} gives $\sup_{0 \leq t \leq T } | v^*(t)|_{C^1} \leq C$ for a finite deterministic constant $C$ that only depends on the $K_i$ and $|u_0|_\beta$.  In particular for all $K_1, K_2, K_3 >0$ we can construct $u^*$ up to time $T$. Furthermore, up to $\stf$ the process $u^*$ solves \eqref{eq:mild-sol}. 

We want to show that by choosing the constants $K_i$ large enough it is possible to guarantee that $\stf \geq T$. Let us start by showing that $\lim_{K_3 \to \infty}   \stf  \geq  \tau_{K_2}^{|\Psi|_{C^\alpha} } \wedge  \tau_{K_1}^X = \stt$ i.e. that $|R |_{2\alpha}$ cannot explode for bounded $\Psi$ and $X$.

Now by Proposition \ref{prop:Stoch-Conv-Rough-Path} we know that for $p$ satisfying \eqref{eq:condp2} 
\begin{equation}\label{eq:Rb}
\Ex \Big[  \big\| R^{\theta(u^*)} \big\|_{C([0, t \wedge \tau_{K_1}^X];\Omega C^{2\alpha})}^p   \Big] \, \leq \, C (1+K_1^p) | \theta|_{C^1} \$ u^* \|_{p,\alpha,t}^p, 
\end{equation}
where  
\begin{equation}
\$ u^* \$_{p,\alpha,t} \,= \, \Ex \bigg[ \sup_{x_1 \neq x_2, s_1 \neq s_2 \leq t}   \frac{|u^*(s_2,x_2)- u^*(s_1,x_1)|^p}{\big(|s_1-s_2|^{ \alpha /2}+ |x_1-x_2|^{\alpha }\big)^p}     \bigg]^{1/p}, 
\end{equation}
i.e. the $ \$ \cdot \$_{p,\alpha} $-norm evaluated up to time $t$. 
 
Let us derive a bound on $ \$ u \$_{\alpha,p}$. First of all using   \eqref{eq:ge1} -- \eqref{eq:cf} and noting that on the right hand side of \eqref{eq:cf} all the terms except for those involving $v^*$ or $R_{\Psi^*}$ are bounded by powers of the constants $K_1, K_2$ and that no term involving any product of norms of $v^*$ or $R_{\Psi^*}$ appears one can write for any $p \geq 1$
\begin{align}
\Ex & \Big[ \sup_{0 \leq s \leq t \wedge \stt } |v^*(s)|_{C1}^p  \Big] \notag\\
& \leq \,  C \,  \Ex \Big[ \Big( \sup_{0 \leq s \leq t \wedge \stt}  \int_0^s (s - \tau)^{\frac{\alpha}{2}-1} \big(1 + \tau^{\frac{\beta - 2 \alpha}{2}}   + |v^*(s)| + \big| R^{\theta(u^*)} (s)\big| \big)       \d \tau \Big)^p  \Big]   \\
&\leq   C  \int_0^t (t - \tau)^{\frac{\alpha}{2}-1} \big( 1 + s^{\frac{\beta - 2 \alpha}{2}}   +\Ex \big[ |v^*(s \wedge \stt)|^p_{C^1} \big] + \Ex \big[ \big| R^{\theta(u^*)} (s \wedge \stt)\big|_{2 \alpha}^p \big]     \big) \, \d \tau  .\notag
\end{align}
Here the constant $C$ depends on $|g|_{C^3}, |\theta|_{C^1}, p, |u_0|_\beta, K_1$, and $K_2$ but not on $K_3$. Repeating the same calculation as in the second half of the proof of Proposition \ref{prop:cont-dep-psi} which is essentially an exploitation of the bound $|S(t) -1|_{C^1 \to C^0} \leq C t^{1/2}$ one gets the bound 
\begin{align}
\Ex & \Big[ \|v^*\|_{C^{1/2}([0, t \wedge \stt ], C)}^p \Big] \\
&\leq   C  \int_0^t (t-s)^{\frac{\alpha}{2}-1} \big( 1 + s^{\frac{\beta - 2 \alpha}{2}}   +\Ex \big[ |v^*(s \wedge \stt)|^p_{C^1} \big] + \Ex \big[ \big| R^{\theta(u^*)} (s \wedge \stt)\big|_{2 \alpha}^p \big]  \big)     \d s  .\notag
\end{align}
Noting that according to \eqref{eq:ge2} we know that 
\begin{equation}
\$ u^* \$_{p,\alpha,t} \leq C\Big(1 + |\theta|_0^p + \Ex\Big[  \sup_{0 \leq s \leq t \wedge \stt } |v^*(s)|_{C1}^p +   \|v^*\|_{C^{1/2}([0, t \wedge \stt ], C)}^p \Big]  \Big),  \label{eq:ubohmw}
\end{equation} 
we get using \eqref{eq:Rb}, that for $p$ large enough to satisfy \eqref{eq:condp2}  
\begin{align}
\$ v \$_{p,1,1/2,t}^p  \leq   C  \int_0^t (t-s)^{\frac{\alpha}{2}-1} \big( 1 + s^{\frac{\beta - 2 \alpha}{2}}  + \$ v \$_{p,1,1/2,s}^p \big)       \d s  ,
\end{align}
where
\begin{equation}
	\$ v \$_{p,1,1/2,t}^p =  \Ex  \Big[ \sup_{0 \leq s \leq t \wedge \stt } |v(s)|_{C1}^p + |v|_{C^{1/2}([0,t \wedge \stt],C)}  \Big] .
\end{equation}

So the Gronwall lemma \ref{lem:Gronwall} gives a uniform bound on this quantity for all $t \leq T$. Thus using \eqref{eq:Rb} as well as \eqref{eq:ubohmw} one more time, we can deduce that 
\begin{equation}
\sup_{K_3} \Ex \Big[   \big\| R^{\theta(u^*)} \big\|_{C([0, T \wedge \stt ];\Omega C^{2\alpha})}^p   \Big]  \leq  C(K_1,K_2) <\infty. 
\end{equation}
This implies by Markov inequality that 
\begin{equation}
\Prob \Big[ \tau_{K_3}^{|R|_{2 \alpha}} \leq T \Big] =  \Prob \Big[     \big\| R^{\theta(u)} \big\|_{C[0, t];\Omega C^{2\alpha})} \geq K_3   \Big] \leq \frac{C}{K_3^p}.
\end{equation}
This gives the desired result concerning non-explosion of $R_\Psi$ before $\stt$. But then as the expectation of the $p$-th moment of the $\alpha$-H\"older norm of $\Psi^{\theta(u)}$ is controlled by $|\theta|_0^p$ and as we know a priori that the stochastic convolution  cannot blow up we see that we can remove $\stt$ as well. This finishes the proof of global existence.

Uniqueness up to the stopping time $\stf$ follows from the construction as a fixed point in Proposition \ref{prop:cont-dep-psi}. Then by the same argument as above we can remove the stopping times and obtain uniqueness. This finishes the proof.
\end{proof}

\begin{proof}(of Theorem \ref{thm:Stability})
For $\eps > 0$ the approximation $u_\eps$ is the unique solution to the fixed point problem
\begin{align}
u_\eps(t)  &\,= \, S(t) u_0 + \int_0^t S(t-s) \, g\big(u(s)\big) \, \partial_x u(s) \, \d s + \int_0^t S(t-s) \, \theta \big(u_\eps(s) \big) \d (W \ast \eta_\eps)  (s) \notag\\
  & \, = \, U(t) + G u_\eps(t) + \Psi^{\theta(u_\eps)}_\eps(t).
\end{align}
Of course, as $u_\eps$ is smooth in the space variable $x$ there is no need for rough path theory to define the integral involving $g$. Nonetheless we can view this integral as a rough integral. To be more precise as a reference rough path one chooses  
\begin{equation}
\XX_\eps(t,x,y) \, = \, \int_x^y \big( X_\eps(t,z) -X_\eps(t,x)  \big) \d_z X_\eps(t,z). 
\end{equation}
As $X_\eps$ is a smooth function of the space variable $x$ this integral can be defined as a usual Lebesgue integral.
 Note that by  Proposition \ref{prop:regrem} $\Psi^{\theta(u)_\eps}_\eps $ is indeed an $X_\eps$ controlled rough path with decompostion given as 
\begin{equation}\label{eq:eps-dec}
 \delta \Psi^{\theta(u_\eps} (t,x,y) \, = \, \theta(u_\eps(t,x)) \, \delta X_\eps(t,x,y)    + R_\eps^{\theta(u_\eps)}(t,x,y). 
\end{equation}
As $X_\eps$ is a smooth function this decomposition is by no means unique but it has the advantage that Proposition  \ref{prop:regrem} provides moment estimates on the remainder, that are uniform in $\eps$.  

Introduce the stopping time  
\begin{align}
\tau_{K_1,K_2,K_3,K_4,\eps}\,= \,   \sigma^{4,\eps}_{K_1, K_2,K_3,K_4 } \wedge \sigma^{4,\eps}_{K_1, K_2,K_3,K_4 } ,
\end{align}
where $\sigma^{4,\eps}_{K_1, K_2,K_3,K_4 }$ is defined as $\stf$ stopped when $\| u\|_{C^{\alpha/2,\alpha}}$ exceeds $K_4$ and is $\sigma^{3,\eps}_{K_1, K_2,K_3 }$ is defined analogously with respect to the norms derived from $u_\eps$. Sometimes, we will use the shorthand $\tau_\eps$.
 
We will show first, that before $\tau_\eps$ the process   $u_\eps$ coincides with the solution to the cut-off fixed point problem 
\begin{align}
u_\eps(t)  &\,= \, S(t) u_0 + v^{\chi_{K_5}(\Psi_\eps)}+ \int_0^t S(t-s) \, \theta \big(u_\eps(s) \big) \d (W \ast \eta_\eps) (s).  \label{eq:smoothfixpoint}
\end{align}

Here the fixed point $v^{\chi_{K_5}(\Psi_\eps)}$ is defined as in Proposition \ref{lem:ex-fp-g}.  (In particular, we use the cutoff at $K_5 = C K_4 + K_2+ K_3 $ to define the cutoff of $\Psi_\eps$).

As $\Psi^{\theta(u_\eps)} $ is also a smooth function of the space variable $x$ there is no need for rough path theory in order to define the integral that determines the fixed point map in Proposition \ref{lem:ex-fp-g}.  But actually, the two integrals coincide, because in the approximation 
\begin{align}
\sum_i \hat{p}_{t-s}(y-x_i) g(u_i) \delta u_{i,i+1} +  \hat{p}_{t-s}(y-x_i) g'(u_i) \theta (u_i) \XX_\eps(x_i, x_{i+1} )\theta(u_i)^T
\end{align}
the first terms converge to the usual integral $\int_0^1 \hat{p}_{t-s}(y-x) g\big( u(x) \big) \partial_x u(x) \, \d x$ and the sum involving the iterated integrals converges to zero due to $\XX_\eps \in \Omega C^{2}$. In particular, all the bounds derived for the fixed point apply in the present context. So due  to uniqueness of the smooth evolution \eqref{eq:smoothfixpoint} holds.

In Section \ref{sec:Prel-Calc} we have already derived uniform bounds for the quantities appearing in this decomposition. First of all, for the derivative processes we have
\begin{equation}
\$ \theta(u) - \theta(u_\eps)  \$_{p,\alpha} \, \leq \, C K_4 |\theta|_{C^2} \$ u - u_\eps  \$_{p,\alpha} .
\end{equation}
From \eqref{eq:Hol-Bou-Psi2} and \eqref{eq:Hol-Bou-Psi6} we have
\begin{align}
\Ex \Big[   \big\| \Psi^{\theta(u)} - \Psi_\eps^{\theta(u_\eps)}   \big\|_{C( [0,T]; C^{ \alpha} )  }^p        \Big] \, &\leq \,  C \eps^{\gamma} \$ \theta(u) \$_{p,\alpha}^p +  C T^{\vartheta}  \$ \theta(u) - \theta(u_\eps)  \$_{p,0}^p \notag\\
& \leq  C \eps^{\gamma} + C T^{\vartheta}  \$ u - u_\eps  \$_{p,0}^p ,
\end{align}
where $\vartheta$ is as defined in Proposition \ref{prop:Reg-Stoch-Conv} and $\gamma$ is as in \eqref{eq:condgamma1}. In the Gaussian case $\theta =1$ this bound simplifies to 
\begin{align}
\Ex \Big[   \big\| X - X_\eps   \big\|_{C( [0,T]; C^{ \alpha} )  }^p        \Big] \, &\leq \,  C \eps^{\gamma} .
\end{align} 
Using \eqref{eq:Hol-Bou-Psi3} and \eqref{eq:Hol-Bou-Psi7}  we obtain the same bounds for the $C( [0,T]; C^{ \alpha} )$ norm replaced by the $C^{\alpha/2}( [0,T]; C )$ norm. 

From \eqref{eq:bou-RT} and \eqref{eq:Stab-Reps} we get
\begin{align}
\Ex  \bigg[   \big\| R^{\theta(u)}  - & R^{\theta(u_\eps)}_\eps \big\|_{C \left( [0,\tau_{K_1,K_2,K_3,\eps}];\Omega C^{2\alpha}  \right)    }^p    \bigg] \,\notag\\
 & \leq \, C  \big( 1+K_1^p  \big)  \| \theta (u_\eps)\|_{p,\alpha}^{p} \eps^{\gamma}  +  C T^{\vartheta}  \big( 1+K_1^p  \big)  \| \theta(u)- \theta(u_\eps)  \|_{p,\alpha}^{p}  .     
\end{align}
Here we assume that $\vartheta$ is as defined in \eqref{eq:cond-thet2} and $\gamma$ as in \eqref{eq:cond-gamma2}.  Note that these assumptions on $\vartheta$ and $\gamma$ imply that the conditions needed above are automatically satisfied.

For the terms in the integral involving $g$ we get using Proposition \ref{prop:cont-dep-psi} that for $T$ small enough
\begin{align}
\Ex \Big[ \|  v - v^\eps   \|_{C([0,T\wedge \tau_{K_1,K_2,K_3,K_4,\eps}],C^1 )}^p \Big] \leq C \, \Ex \Big[   \Delta_{\Psi^{\theta(u)}, \Psi_\eps^{\theta(u_\eps)}  } \Big], 
\end{align}
and 
\begin{align}
\Ex   \Big[ \|  v - v^\eps   \|_{C^{1/2}([0,T],C^0 )}^p \Big]  \, \leq \,  C  \, \Ex \Big[   \Delta_{\Psi^{\theta(u)}, \Psi_\eps^{\theta(u_\eps)}  } \Big],
\end{align}
where
\begin{align}
 \Delta_{\Psi^{\theta(u)}, \Psi_\eps^{\theta(u_\eps)}}   \, = \,&    \|X-X_\eps \|_{C^\alpha_T} +  \|\XX-\XX_\eps \|_{\Omega C^{2\alpha}_T}    +    \| \Psi^{\theta(u)} -   \Psi_\eps^{\theta(u_\eps)}   \|_{C^\alpha_T} \notag\\
 &+   \| \theta(u)- \theta(u_\eps) \|_{C^\alpha_T}  +  \|    R^{\theta(u)}  -  R^{\theta(u_\eps)}_\eps    \|_{\Omega  C^{2\alpha}_{T \wedge \tau_{K_1,K_2,K_3,eps} }}     + |u_0 - u_\eps  |_\beta.   
 \end{align}
Thus summarising for $p$ big enough we get the bound 
\begin{align}
\Ex \Big[ | u - u_\eps \|_{C^{\alpha/2,\alpha}_{T \wedge \tau_\eps} }^p    \Big]\, \leq C \eps^{\gamma,p} +  C T^\vartheta\Ex \Big[ | u - u_\eps \|_{C^{\alpha/2,\alpha}_{T \wedge \tau_\eps} }^p    \Big]\ + \Ex \Big[|u- u_\eps |_\beta^p \Big], \label{eq:finubou}
\end{align}
where we use the notation  $\| u - u_\eps \|_{C^{\alpha/2,\alpha}_{T \wedge \tau_\eps} } =\| u - u_\eps \|_{C^{\alpha/2,\alpha}[0,T\wedge \tau_{K_1,K_2,K_3,\eps}]\times [0,1]}$. Here the constant depends on the $K_i$ but not on $\eps$. We have included the dependence on different initial data in the bound, although we start the two evolutions at the same initial data, because we will use this bound in an iteration over different time intervals.

For $T$ small enough the prefactor of the term involving $u - u_\eps $ on the right hand side of \eqref{eq:finubou} is less than $\frac{1}{2}$ and this term can be absorbed into the term on the left hand side. We then have for $T$ small enough  
\begin{align}
\Ex \Big[ | u - u_\eps \|_{C^{\alpha/2,\alpha}_{T \wedge \tau_\eps} }^p    \Big] \, \leq C \eps^{\gamma p} + C\Ex \Big[|u_0- u_{0,\eps} |_\beta^p \Big]. 
\end{align}
Since we start off with the same initial condition, we get convergence of the cutoff approximations before $T$. To get the same result for arbitrary times we iterate this procedure using $u(T)$ and $u_\eps(T)$ as new initial data. To derive bounds on their difference in $C^\beta$ (not $C^\alpha$!) we use the following trick. The solution $u(T)$ is a rough path controlled by $X(T)$ and we have 
\begin{equation}
\delta u(T,x,y) = \theta(u(t,x)) \delta X(t,x,y)  + R^{\theta(u)}(t,x,y),
\end{equation}
and similarly for $u_\eps$. Thus we have
\begin{equation}\label{eq:MT}
|u(T) - u_\eps(T)|_\beta \leq |\theta| |X(T)-X_\eps(T)  |_\beta + |\theta|_{C^1}|u(T)-u_\eps(T)|_0 K +  \big| R^{\theta(u)} - R_\eps^{\theta(u_\eps)} \big|_{\Omega C^\beta}. 
\end{equation}
We have already bounded all the quantities on the right hand side above so that we can iterate the argument. Note here that the bound on the Gaussian rough path holds for every exponent less than $\frac{1}{2}$ and thus in particular for $\beta$.  The iteration gives us 
\begin{align}
\Ex \Big[ \| u - u_\eps \|_{C^{\alpha/2,\alpha}_{T \wedge \tau_\eps} }^p    \Big]  \to 0  
\end{align}
for arbitrary $T$ and for any choice of the $K_i$ (but not uniformly in the $K_i$). Now to conclude we only need to remove the stopping times. To this end note that for $\delta < 1$ due to the definition of the stopping time $\tau_{K_1,K_2,K_3,K_4, \eps}$
\begin{align}
\Prob\Big[ \| u  -u_\eps \|_{C^{\alpha/2,\alpha},T}  \geq \delta  ]  \, \leq  \,  &\Prob\Big[ \| u-u_\eps \|_{C^{\alpha/2,\alpha},T\wedge \tau_{K_1,K_2,K_3,K_4, \eps}} \geq \delta   ]  \notag\\
&+ \Prob\Big[\sigma^{3,\eps}_{K_1,K_2,K_3,K_4} \leq T \Big]  + \Prob\Big[\sigma^{3}_{K_1,K_2,K_3,K_4} \leq T \Big] 
\end{align}
Now as in the proof of global existence by choosing the $K_i$ sufficiently large the second probabilities for the stopping times to be less than $T$ can be made arbitrarily small  . Then by choosing $\eps$ small enough the first probability can be made arbitrarily small as well. This finishes the argument.
\end{proof}

\bibliography{./Burger_Bib}
\bibliographystyle{./Martin}
\end{document}